\newtheorem{thm}{Theorem}[section]
\newtheorem{prop}[thm]{Proposition}
\newtheorem{lemma}[thm]{Lemma}
\newtheorem{cor}[thm]{Corollary}
\theoremstyle{remark}
\newtheorem{remark}[thm]{Remark}
\newcommand{\id}{{\rm{id}}}
\newcommand{\Ad}{{\rm{Ad}}}
\newcommand{\BC}{\mathbf C}
\newcommand{\BK}{\mathbf K}
\newcommand{\BB}{\mathbf B}
\newcommand{\la}{\langle}
\newcommand{\ra}{\rangle}
\newcommand{\Aut}{{\rm{Aut}}}
\newcommand{\Equi}{{\rm{Equi}}}
\newcommand{\GEqui}{{\rm{GEqui}}}
\newcommand{\Pic}{{\rm{Pic}}}
\newcommand{\GPic}{{\rm{GPic}}}
\newcommand{\Int}{{\rm{Int}}}
\newcommand{\Ker}{{\rm{Ker}}}
\newcommand{\Ima}{{\rm{Im}}}
\newtheorem{Def}{Definition}[section]
\title{The generalized Picard groups for finite dimensional $C^*$-Hopf algebra coactions on
unital $C^*$-algebras}
\author{Kazunori Kodaka}
\address{Department of Mathematical Sciences, Faculty of Science, Ryukyu
\endgraf
University, Nishihara-cho, Okinawa, 903-0213, Japan}
\address{\sl{E-mail address}: \rm{kodaka@math.u-ryukyu.ac.jp}}
\begin{document}
\maketitle
\begin{abstract}
We shall generalize the notion of the strong Morita equivalence for coactions of
a finite dimensional $C^*$-Hopf algebra on a unital $C^*$-algebra and define the Picard groups with
respect to the generalized strong Morita equivalence. We call them the generalized Picard groups for
coactions of a finite dimensional $C^*$-Hopf algebra on a unital $C^*$-algeba.
We shall investigate basic properties of the generalized Picard groups and clarify the relation between
the generalized Picard groups and the Picard groups for unital inclusions of unital $C^*$-algebras.
\end{abstract}
 
\section{Introduction}\label{sec:intro} In \cite {KT3:equivalence}, we introduced the notion of the strong Morita
equivalence for coactions of a finite dimensional $C^*$-Hopf algebra on a unital $C^*$-algebra and
in \cite {Kodaka:equivariance} we defined the finite dimensional $C^*$-Hopf algebra coaction equivariant
Picard group of a unital $C^*$-algebra. We note that this equivariant Picard group can be regarded as
the Picard group for the coaction of the finite dimensional $C^*$-Hopf algebra on the unital $C^*$-algebra.
Furthermore, in \cite {KT4:morita} and \cite {Kodaka:Picard} we introduced the notion of the strong Morita
equivalence for unital inclusions of unital $C^*$-algebras and defined the Picard groups for them. In this paper,
we shall generalize the notion of the strong Morita equivalence for coactions of a finite dimensional $C^*$-Hopf
algebra on a unital $C^*$-algebra and we shall define the Picard groups with respect to the generalized
strong Morita equivalence. We call them the {\it generalized} Picard groups for coactions of a finite dimensional
$C^*$-Hopf algebra on a unital $C^*$-algebra. Also, we shall give the similar results to \cite {KT3:equivalence},
\cite {Kodaka:equivariance}, \cite{KT4:morita} and \cite {Kodaka:Picard}. Furthermore, we shall clarify the
relation between the generalized Picard groups and the Picard groups for unital inclusions of unital
$C^*$-algebras.

\section{Preliminaries}\label{sec:pre} Let $H$ be a finite dimensional $C^*$-Hopf algebra. We denote
its comultiplication, counit and antipode by $\Delta$, $\epsilon$, and $S$, respectively.
We shall use Sweedler's notation $\Delta(h)=h_{(1)}\otimes h_{(2)}$ for any $h\in H$ which
surppresses a possible summation when we write comultiplications. We denote by $N$ the
dimension of $H$. Let $H^0$ be the dual $C^*$-Hopf algebra of $H$. We denote its comultiplication,
counit and antipode by $\Delta^0$, $\epsilon^0$ and $S^0$, respectively. There is the distinguished projection $e$
in $H$. We note that $e$ is the Haar trace on $H^0$. Also, there is the distinguished projection $\tau$ in $H^0$
which is the Haar trace on $H$. Since $H$ is finite dimensional, $H\cong \oplus_{k=1}^L M_{f_k}(\BC)$
and $H^0 \cong \oplus_{k=1}^K M_{d_k}(\BC)$ as $C^*$-algebras, where $M_n (\BC)$ is the $n\times n$-
matrix algebra over $\BC$. Let $\{v_{ij}^k  \, | \, k=1,2, \dots, L, \, i,j=1,2,\dots,f_k \}$ be a system of matrix units
of $H$. Let $\{w_{ij}^k \, | \, k=1,2\dots, K, \, i,j=1,2,\dots, d_k\}$ be a basis of $H$ satisfying Szyma\'nski and
Peligrad's \cite [Theorem 2.2,2]{SP:saturated}, which is called a system of
\sl
comatrix units
\rm
of $H$, that is, the dual basis of a system of matrix units of $H^0$. Also, let
$\{\phi_{ij}^k \, | \, k=1,2,\dots, K, \, i,j=1,2,\dots, d_k \}$ and
$\{\omega_{ij}^k \, | \, k=1,2,\dots, L , \, i,j=1,2,\dots, f_k \}$ be systems of matrix units and
comatrix units of $H^0$, respectively.
\par
Let $A$ be a $C^*$-algebra and $M(A)$ its multiplier algebra. Let $p, q$ be projections in $A$.
If $p$ and $q$ are Murray-von Neumann equivalent, then we denote it by $p\sim q$ in $A$.
We denote by $\id_A$ and $1_A$ the identity map on $A$ and the unit element in $A$,
respectively. We simply denote them by $\id$ and $1$ if no confusion arises. Let $\Aut (A)$
be the group of all automorphisms of $A$ and for any $\alpha\in \Aut (A)$, let $\underline{\alpha}$ be
the automorphism of $M(A)$ induced by $\alpha$. Also, for any twisted coaction $(\rho, u)$ of $H^0$ on $A$,
let $(\underline{\rho}, u)$ be the twisted coaction of $H^0$ on $M(A)$ induced by $(\rho, u)$.
\par
Let $A$ be a $C^*$-algebra and $(\rho, u)$ a twisted coaction of $H^0$ on $A$. Let $f^0$ be a
$C^*$-Hopf algebra automorphism of $H^0$. Let $(\rho_{f^0}, u_{f^0})$ be the twisted coaction of $H^0$
on $A$ induced by $(\rho, u)$ and $f^0$, which is defined by
$$
\rho_{f^0}=(\id\otimes f^0 )\circ\rho, \quad u_{f^0}=(\id\otimes f^0 \otimes f^0 )(u) .
$$
Let $(\sigma, v)$ be a twisted coaction of $H^0$ on a $C^*$-algebra $B$ and $(\sigma_{f^0}, v_{f^0})$ the 
twisted coaction of $H^0$ on $B$ induced by $(\sigma, v)$ and $f^0$. For any $C^*$-Hopf algebra automorphism
$f^0$ of $H^0$, let $f$ be the $C^*$-Hopf algebra automorphism of $H$ induced by $f^0$, which is defined by
$$
\phi(f^{-1}(h))=f^0 (\phi)(h)
$$
for any $h\in H$, $\phi\in H^0$.

\begin{lemma}\label{lem:equivalence}With the above notations, we suppose that $(\rho, u)$ and
$(\sigma, v)$ are strongly Morita equivalent. Then for any $C^*$-Hopf algebra automorphism $f^0$ of
$H^0$, $(\rho_{f^0}, u_{f^0})$ and $(\sigma_{f^0}, v_{f^0})$ are strongly Morita equivalent.
\end{lemma}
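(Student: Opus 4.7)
The plan is to transport the implementing bimodule along $f^0$ in exactly the same way the coactions are transported. Recall that a strong Morita equivalence between $(\rho,u)$ and $(\sigma,v)$ is witnessed by an $A$--$B$-equivalence bimodule $X$ together with a linear map $\lambda:X\to X\otimes H^0$ satisfying the usual compatibility relations with $\rho$, $\sigma$, $u$, $v$ and with the left/right inner products, plus a coassociativity-type identity involving the twists $u$ and $v$. I would begin by recording this definition precisely, so that the subsequent verification reduces to checking a short list of identities.

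Next, I would define
$$
\lambda_{f^0}=(\id_X\otimes f^0)\circ\lambda,
$$
which lives in $X\otimes H^0$ because $f^0$ is a linear endomorphism of $H^0$. The claim is that $(X,\lambda_{f^0})$ implements a strong Morita equivalence between $(\rho_{f^0},u_{f^0})$ and $(\sigma_{f^0},v_{f^0})$. I would then verify, one at a time, each of the defining axioms for a strong Morita equivalence, substituting $\lambda_{f^0}$, $\rho_{f^0}$, $\sigma_{f^0}$, $u_{f^0}$, $v_{f^0}$ in place of $\lambda$, $\rho$, $\sigma$, $u$, $v$. Each identity reduces, after applying $\id\otimes f^0$ (or $\id\otimes f^0\otimes f^0$) to both sides of the corresponding identity for $(X,\lambda)$, to the fact that $f^0$ is a $C^*$-Hopf algebra homomorphism, so in particular
$$
(f^0\otimes f^0)\circ\Delta^0=\Delta^0\circ f^0,\qquad \epsilon^0\circ f^0=\epsilon^0,
$$
together with the multiplicativity and $*$-preservation of $f^0$. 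The counital condition for $\lambda_{f^0}$ uses the second identity; coassociativity with twist uses the first identity applied to $\lambda\otimes\id$ and $\id\otimes\lambda$; the module-map compatibilities use multiplicativity of $f^0$ combined with the compatibility already satisfied by $(\rho,u)$, $(\sigma,v)$ and $\lambda$; and compatibility with the inner products follows because $f^0$ is a $*$-map and $\lambda_{f^0}$ differs from $\lambda$ only in the $H^0$-slot.

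The main obstacle is largely notational rather than conceptual: one must keep track, when computing in $X\otimes H^0\otimes H^0$, of which tensor slot is being acted on by $f^0$, and apply $(\id\otimes f^0\otimes f^0)$ at exactly the right moment. Once the bookkeeping is set up, each axiom verification is a one-line consequence of the corresponding axiom for $(X,\lambda)$ after composing with the appropriate power of $f^0$. There are no deep ingredients beyond Hopf-algebra functoriality, so the lemma should follow in a straightforward manner from the definitions and the hypothesis that $f^0$ is a $C^*$-Hopf algebra automorphism.
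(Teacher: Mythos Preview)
Your proposal is correct and follows essentially the same approach as the paper: define $\lambda_{f^0}=(\id_X\otimes f^0)\circ\lambda$ and verify each axiom of a twisted coaction on the equivalence bimodule by applying $\id\otimes f^0$ (or $\id\otimes f^0\otimes f^0$) to the corresponding identity for $\lambda$, using that $f^0$ is multiplicative, $*$-preserving, and satisfies $(f^0\otimes f^0)\circ\Delta^0=\Delta^0\circ f^0$ and $\epsilon^0\circ f^0=\epsilon^0$. The paper carries out these verifications explicitly by expanding elements as finite sums, but the content is exactly what you describe.
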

\begin{proof}Since $(\rho, u)$ and $(\sigma, v)$ are strongly Morita equivalent, 
there are an $A-B$-equivalence bimodule $X$ and a twisted coaction $\lambda$ of $H^0$
on $X$ with respect to $(A, B, \rho, u, \sigma, v)$ (See \cite {KT3:equivalence}).
Let $\lambda_{f^0}$ be the linear map from $X$ to $X\otimes H^0$ defined by $\lambda_{f^0}
=(\id\otimes f^0 )\circ\lambda$. By routine computations, we can see that $\lambda_{f^0}$
is a twisted coaction of $H^0$ on $X$ with respect to $(A, B, \rho_{f^0}, u_{f^0}, \sigma_{f^0}, v_{f^0})$.
Indeed, for any $a\in A$, $x\in X$,
$$
\lambda_{f^0}(a\cdot x)=(\id\otimes f^0 )(\lambda(a\cdot x))=(\id\otimes f^0 )(\rho(a)\cdot \lambda(x)) .
$$
Since $\rho(a)\in A\otimes H^0$ and $\lambda(x)\in X\otimes H^0$, we can write that
$$
\rho(a)=\sum_i a_i \otimes \phi_i  , \quad  \lambda(x)=\sum_j x_j \otimes\psi_j ,
$$
where $a_i \in A$, $x_j \in X$, $\phi_i ,\psi_j \in H^0$. Hence
\begin{align*}
\lambda_{f^0}(a\cdot x) & =(\id\otimes f^0 )(\sum_{i,j}(a_i \cdot x_j )\otimes\phi_i \psi_j )
=\sum_{i, j}(a_i \otimes x_j )\otimes f^0 (\phi_i )f^0 (\psi_j ) \\
& =(\id\otimes f^0 )(\rho(a))\cdot (\id\otimes f^0 )(\lambda(x))=\rho_{f^0 }(a)\cdot\lambda_{f^0 }(x) .
\end{align*}
Similarly $\lambda_{f^0}(x\cdot b)=\lambda(x)\cdot\sigma_{f^0}(b)$ for any $b\in B$, $x\in X$.
For any $x, y\in X$,
$$
\sigma_{f^0 }(\la x, y \ra_B )=(\id\otimes f^0 )(\sigma(\la x, y \ra_B )
=(\id\otimes f^0 )(\la \lambda(x), \lambda(y) \ra_{B\otimes H^0 })
$$
Since $\lambda(x), \lambda(y)\in X\otimes H^0 $, we can write that
$$
\lambda(x)=\sum_i x_i \otimes\phi_i , \quad \lambda(y)=\sum_j y_j \otimes\psi_j ,
$$
where $x_i , y_j \in X$, $\phi_i , \psi_j \in H^0$. Hence
\begin{align*}
\sigma_{f^0}(\la x, y \ra_B ) & =(\id\otimes f^0 )(\sum_{i, j}\la x_i \otimes\phi_i  , \, y_j \otimes \psi_j \ra_{B\otimes H^0} ) \\
& =\sum_{i, j}\la x_i , y_j \ra_B \otimes f^0 (\phi_i^* )f^0 (\psi_j ) \\
& =\la (\id\otimes f^0 )(\lambda(x)), \, (\id\otimes f^0 )(\lambda(y)) \ra_{B\otimes H^0} \\
& =\la \lambda_{f^0}(x), \, \lambda_{f^0}(y) \ra_{B\otimes H^0} .
\end{align*}
Similarly $\rho_{f^0}({}_A \la x, y \ra)={}_{A\otimes H^0} \la \lambda_{f^0}(x), \, \lambda_{f^0}(y) \ra$ for any $x, y\in X$.
Also, for any $x\in X$,
$$
(\id\otimes\epsilon^0 )(\lambda_{f^0}(x))=((\id\otimes\epsilon^0 )\circ(\id\otimes f^0 ))(\lambda(x)) .
$$
Since $\lambda(x)\in X\otimes H^0 $, we can write that $\lambda(x)=\sum_i x_i \otimes\phi_i$,
where $x_i \in X$, $\phi_i \in H^0$. Let $f$ be the $C^*$-Hopf algebra automorphism of $H$ induced by $f^0$.
Then
\begin{align*}
((\id\otimes\epsilon^0 )\circ \lambda_{f^0})(x) & =\sum_i (\id\otimes\epsilon^0 )(x_i \otimes f^0 (\phi_i ))
=\sum_i x_i f^0 (\phi_i )(1) \\
& =\sum_i x_i \phi_i (f^{-1}(1))=\sum_i x_i \phi_i (1) \\
& =(\id\otimes\epsilon^0 )(\sum_i x_i \otimes\phi_i )=(\id\otimes\epsilon^0 )(\lambda(x))=x .
\end{align*}
Hence $((\id\otimes\epsilon^0 )\circ\lambda_{f^0})(x)=x$ for any $x\in X$.
Furthermore, for any $x\in X$,
\begin{align*}
(\lambda_{f^0}\otimes\id)(\lambda_{f^0}(x)) & =((\id\otimes f^0 \otimes\id)\circ(\lambda\otimes\id)\circ(\id\otimes f^0 )
\circ\lambda)(x) \\
& =(\id\otimes f^0 \otimes f^0 )(((\lambda\otimes\id)\circ\lambda)(x)) \\
& =(\id\otimes f^0 \otimes f^0 )(u(\id\otimes\Delta^0 )(\lambda(x))v^* ) \\
& =u_{f^0}(\id\otimes\Delta^0 )((\id\otimes f^0 )(\lambda(x))v_{f^0}^* \\
& =u_{f^0 }(\id\otimes\Delta^0 )(\lambda_{f^0} (x))v_{f^0}^* .
\end{align*}
Therefore, $(\rho_{f^0}, u_{f^0})$ and $(\sigma_{f^0}, v_{f^0})$ are strongly Morita equivalent.
\end{proof}
Let $A$, $B$ be $C^*$-algebra. For any $A-B$-equivalence bimodule $X$, let $\widetilde{X}$ be the
dual $B-A$-equivalence bimodule. For any $x\in X$, we denote by $\widetilde{x}$ the element in $\widetilde{X}$
induced by $x\in X$.

\section{Generalized strong Morita equivalence for coactions}\label{sec:Morita}
\begin{Def}\label{Def:Morita}Let $(\rho, u)$ and $(\sigma, v)$ be twisted coactions of $H^0$
on $A$ and $B$, respectively. The twisted coaction $(\rho, u)$ is
\sl
generalized strongly Morita equivalent
\rm
to the twisted coaction $(\sigma, v)$ if there are $C^*$-Hopf algebra automorphisms $f^0 , g^0$
such that $(\rho_{f^0}, u_{f^0})$ is strongly Morita equivalent to $(\sigma_{g^0}, v_{g^0})$.
\end{Def}

\begin{lemma}\label{lem:relation} The generalized strong Morita equivalence for twisted coactions
of a finite dimensional $C^*$-Hopf algebra on $C^*$-algebras is an equivalence relation.
\end{lemma}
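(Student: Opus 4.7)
The plan is to verify the three defining properties of an equivalence relation in turn. Before starting, I would record the composition identity
\[
(\rho_{f^0})_{m^0}=\rho_{m^0\circ f^0},\qquad (u_{f^0})_{m^0}=u_{m^0\circ f^0}
\]
for any pair of $C^*$-Hopf algebra automorphisms $f^0, m^0$ of $H^0$, which is immediate from the definitions $\rho_{f^0}=(\id\otimes f^0)\circ\rho$ and $u_{f^0}=(\id\otimes f^0\otimes f^0)(u)$. This is the single algebraic fact that drives the whole argument.

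For reflexivity, I would take $f^0=g^0=\id_{H^0}$, so $(\rho_{f^0},u_{f^0})=(\rho,u)$, and observe that the standard $A$-$A$-equivalence bimodule $A$ with twisted coaction $\rho$ exhibits $(\rho,u)$ as strongly Morita equivalent to itself. For symmetry, if $(\rho_{f^0},u_{f^0})$ is strongly Morita equivalent to $(\sigma_{g^0},v_{g^0})$ via a bimodule $X$ with twisted coaction $\lambda$, then passing to the dual bimodule $\widetilde X$ (with its induced twisted coaction) witnesses the reverse equivalence, so swapping the roles of $f^0$ and $g^0$ gives that $(\sigma,v)$ is generalized strongly Morita equivalent to $(\rho,u)$.

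The main obstacle is transitivity, because the two hypotheses involve four a priori unrelated automorphisms and the middle coactions need not coincide. Suppose $(\rho,u)$ on $A$ is generalized strongly Morita equivalent to $(\sigma,v)$ on $B$ via $f^0,g^0$, and $(\sigma,v)$ is generalized strongly Morita equivalent to $(\tau,w)$ on $C$ via $h^0,k^0$. My strategy is to use Lemma \ref{lem:equivalence} to force the two middle coactions to agree. Applying Lemma \ref{lem:equivalence} to the first strong Morita equivalence with the automorphism $h^0\circ(g^0)^{-1}$, and using the composition identity above, I obtain
\[
(\rho_{h^0\circ(g^0)^{-1}\circ f^0},\,u_{h^0\circ(g^0)^{-1}\circ f^0})\,\sim_{sM}\,(\sigma_{h^0},v_{h^0}).
\]
Chaining this with the second strong Morita equivalence $(\sigma_{h^0},v_{h^0})\sim_{sM}(\tau_{k^0},w_{k^0})$ via transitivity of ordinary strong Morita equivalence produces the desired generalized strong Morita equivalence between $(\rho,u)$ and $(\tau,w)$, with the pair of automorphisms $h^0\circ(g^0)^{-1}\circ f^0$ on the $\rho$-side and $k^0$ on the $\tau$-side.

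The only routine verifications left are the composition identity itself and the reflexive, symmetric, transitive properties of ordinary strong Morita equivalence of twisted coactions, all of which are available from \cite{KT3:equivalence}. I do not anticipate any subtlety beyond the alignment trick in the transitivity step.
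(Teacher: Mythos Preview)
Your proof is correct and follows essentially the same route as the paper: both argue that reflexivity and symmetry are clear and handle transitivity by applying Lemma~\ref{lem:equivalence} with a suitable composite automorphism to align the two middle coactions, then invoke transitivity of ordinary strong Morita equivalence from \cite{KT3:equivalence}. The only cosmetic difference is that the paper applies the alignment trick to the second strong Morita equivalence (adjusting the $\gamma$-side) whereas you apply it to the first (adjusting the $\rho$-side), which is an immaterial choice.
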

\begin{proof}
This is immediate by routine computations and Lemma \ref{lem:equivalence}.
Indeed, it suffices to show transitivity since the other conditions clearly hold.
Let $A, B, C$ be $C^*$-algebras and $(\rho, u), (\sigma , v), (\gamma, w)$ twisted coaction of
$H^0$ on $A, B, C$, respectively.
We suppose that $(\rho, u)$ is generalized strongly Morita equivalent to $(\sigma, v)$ and that
$(\sigma, v)$ is generalized strongly Morita equivalent to  $(\gamma, w)$. Then there are $C^*$-Hopf algebra
automorphisms $f_1^0 , f_2^0 $ and $g_1^0 , g_2^0$ of $H^0$ such that $(\rho_{f_1^0}, u_{f_1^0})$
is strongly Morita equivalent to $(\sigma_{f_2^0}, v_{f_2^0})$ and such that $(\sigma_{g_1^0} , v_{g_1^0})$
is strongly Morita equivalent to $(\gamma_{g_2^0}, w_{g_2^0})$. By Lemma \ref{lem:equivalence},
$(\sigma_{f_2^0}, v_{f_2^0})$ is strongly Morita equivalent to $(\gamma_{f_2^0 \circ (g_1^0 )^{ -1}\circ g_2^0 } \, , \,
w_{f_2^0 \circ (g_1^0 )^{-1}\circ g_2^0})$. Hence by \cite [Proposition 3.7]{KT3:equivalence}, $(\rho_{f_1^0}, u_{f_1^0})$
is strongly Morita equivalent to $(\gamma_{f_2^0 \circ (g_1^0 )^{ -1}\circ g_2^0 } \, , \,
w_{f_2^0 \circ (g_1^0 )^{-1}\circ g_2^0})$. Therefore $(\rho, u)$ is generalized strongly Morita equivalent to $(\gamma, w)$.
\end{proof}

Modifying \cite {Kodaka:equivariance}, we shall define the generalized Picard group for a twisted coaction of a finite
dimensional $C^*$-Hopf algebra on a $C^*$-algebra.
\par
Let $A$ be a $C^*$-algebra and $H$ a finite dimensional $C^*$-Hopf algebra with its dual $C^*$-Hopf algebra
$H^0$. Let $(\rho, u)$ be a twisted coaction of $H^0$ on $A$. Let $(X, \lambda, f^0 )$ be a triplet of
an $A-A$-equivalence bimodule $X$, a twisted coaction $\lambda$ of $H^0$ on $X$ with respect to
$(A, A, \rho, u, \rho_{f^0}, u_{f^0})$ and a $C^*$-Hopf algebra automorphism $f^0$ of $H^0$,
where $(\rho_{f^0}, u_{f^0})$ is  the twisted coaction of $H^0$ on $A$ induced by $(\rho, u)$ and $f^0$. Let
$\GEqui_H^{\rho, u}(A)$ be the set of all such triplets $(X, \lambda, f^0 )$ as above.
We define an equivalence relation $\sim$ in $\GEqui_H^{\rho, u}(A)$ as follows:
For $(X, \lambda, f^0 ), (Y, \mu, g^0 ) \in \GEqui_H^{\rho, u}(A)$, $(X, \lambda, f^0 )\sim (Y, \mu, g^0 )$
if and only if there is an $A-A$-equivalence bimodule isomorphism $\pi$ of $X$ onto $Y$
such that $\mu\circ\pi=(\pi\otimes\id)\circ\lambda$ and $f^0 =g^0$. We denote by $[X, \lambda, f^0 ]$
the equivalence class of $(X, \lambda, f^0 )$ in $\GEqui_H^{\rho, u}(A)$. Let
$\GPic_H^{\rho, u}(A)=\GEqui_H^{\rho, u}(A)/\! \sim$. We shall define a product in $\GPic_H^{\rho, u}(A)$ as follows:
Let $(X, \lambda, f^0 ), (Y, \mu, g^0 )\in \GEqui_H^{\rho, u}(A)$. Let $\mu_{f^0}$ be the twisted coaction of
$H^0$ on $Y$ induced by $\mu$ and $f^0$. Then by the proof of Lemma \ref {lem:equivalence}, $\mu_{f^0}$
is the twisted coaction of $H^0$ on $Y$ with respect to $(A, A, \rho_{f^0}, u_{f^0}, \rho_{f^0 \circ g^0 }, u_{f^0 \circ g^0})$
and $(\rho_{f^0}, u_{f^0})$ is strongly Morita equivalence to $(\rho_{f^0 \circ g^0 }, u_{f^0 \circ g^0 })$. Hence
$$
(X\otimes_A Y , \lambda\otimes\mu_{f^0}, f^0 \circ g^0 )\in \GEqui_H^{\rho, u}(A) ,
$$
where $\lambda\otimes\mu_{f^0}$ is the twisted coaction of $H^0$ on $X\otimes_A Y$ induced by the
action $`` \cdot_{\lambda\otimes\mu_{f^0}} "$ on $X\otimes_A Y$ defined in \cite [Proposition 3.7]{KT3:equivalence}.
We define the product of $[X, \lambda, f^0 ], [Y, \mu, g^0 ]\in\GPic_H^{\rho, u}(A)$ by
$$
[X, \lambda, f^0 ][Y, \mu, g^0 ]=[X\otimes_A Y , \lambda\otimes\mu_{f^0} , f^0 \circ g^0 ]\in\GPic_H^{\rho, u}(A) .
$$
We show that the above product in $\GPic_H^{\rho, u}(A)$ is well-defined. Let $(\rho, u), (\sigma, v), (\gamma, w)$ be
twisted coactions of $H^0$ on $C^*$-algebras $A, B, C$, respectively. We suppose that there are a twisted
coaction $\lambda$ of
$H^0$ on an $A-B$-equivalence bimodule $X$ with respect to $(A, B, \rho, u, \sigma, v)$ and a twisted coaction $\mu$
of $H^0$ on a $B-C$-equivalence bimodule $Y$with respect to $(B, C, \sigma, v, \gamma, w)$, respectively.

\begin{lemma}\label{lem:define}With the above notations, let $\lambda'$ be a twisted coaction of $H^0$ on
an $A-B$-equivalence bimodule $X'$ with respect to $(A, B, \rho, u, \sigma, v)$.
We suppose that there is an $A-B$-equivalence bimodule
isomorphism $\pi$ of $X$ onto $X'$ such that $\lambda' \circ\pi=(\pi\otimes\id_{H^0})\circ\lambda$. Then $\pi\otimes\id_Y$
is an $A-C$-equivalence bimodule isomorphism of $X\otimes_A Y$ onto $X' \otimes_A Y$ such that
$$
(\lambda' \otimes\mu )\circ(\pi\otimes\id_Y )=(\pi\otimes\id_Y \otimes\id_{H^0})\circ(\lambda\otimes\mu) .
$$
\end{lemma}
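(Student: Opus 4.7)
The plan is to split the verification into two parts: first that $\pi\otimes\id_Y$ is a well-defined $A-C$-equivalence bimodule isomorphism between the balanced tensor products, and second that it intertwines the tensor-product twisted coactions $\lambda\otimes\mu$ and $\lambda'\otimes\mu$ in the asserted way.

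For the first part, since $\pi$ is an $A-B$-equivalence bimodule isomorphism, it is in particular right $B$-linear, so $\pi\otimes\id_Y$ descends to a well-defined linear map $X\otimes_A Y\to X'\otimes_A Y$. A two-sided inverse is provided by $\pi^{-1}\otimes\id_Y$, and preservation of the left $A$-action, the right $C$-action, and both the $A$- and $C$-valued inner products follows directly from the corresponding properties of $\pi$ together with the standard formulas defining these structures on balanced tensor products of equivalence bimodules.

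The intertwining identity is the main content. I would verify it on elementary tensors. Fix $x\in X$ and $y\in Y$ and write $\lambda(x)=\sum_i x_i\otimes\phi_i$ and $\mu(y)=\sum_j y_j\otimes\psi_j$ with $x_i\in X$, $y_j\in Y$, $\phi_i,\psi_j\in H^0$. By the construction of the tensor-product coaction recalled from \cite[Proposition 3.7]{KT3:equivalence},
$$(\lambda\otimes\mu)(x\otimes y)=\sum_{i,j}(x_i\otimes y_j)\otimes\phi_i\psi_j.$$
The hypothesis $\lambda'\circ\pi=(\pi\otimes\id_{H^0})\circ\lambda$ gives $\lambda'(\pi(x))=\sum_i\pi(x_i)\otimes\phi_i$, so
\begin{align*}
(\lambda'\otimes\mu)((\pi\otimes\id_Y)(x\otimes y))
&= (\lambda'\otimes\mu)(\pi(x)\otimes y) \\
&= \sum_{i,j}(\pi(x_i)\otimes y_j)\otimes\phi_i\psi_j \\
&= (\pi\otimes\id_Y\otimes\id_{H^0})((\lambda\otimes\mu)(x\otimes y)).
\end{align*}
Linearity then extends this identity from elementary tensors to all of $X\otimes_A Y$.

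The main obstacle, such as it is, is bookkeeping: one must be sure that the diagonal formula for $\lambda\otimes\mu$ does not depend on the chosen expansions of $\lambda(x)$ and $\mu(y)$ and respects the $B$-balancing in the tensor product. This is precisely what \cite[Proposition 3.7]{KT3:equivalence} establishes, so we simply appeal to it. The twist cocycles $u,v,w$ never enter the verification because the intertwining depends only on the coaction components $\lambda,\lambda',\mu$, and the hypothesis on $\pi$ already packages the compatibility between $\lambda$ and $\lambda'$ completely.
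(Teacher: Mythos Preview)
Your proof is correct and follows the same logical structure as the paper's: reduce to elementary tensors, invoke the definition of the tensor-product coaction, and use the intertwining hypothesis on $\pi$. The only difference is presentational: the paper works in the dual $H$-action picture, verifying
\[
h\cdot_{\lambda'\otimes\mu}(\pi(x)\otimes y)=(\pi\otimes\id_Y)(h\cdot_{\lambda\otimes\mu}(x\otimes y))
\]
for all $h\in H$ via the formula $h\cdot_{\lambda\otimes\mu}(x\otimes y)=[h_{(1)}\cdot_\lambda x]\otimes[h_{(2)}\cdot_\mu y]$, whereas you work directly with the $H^0$-coaction via $(\lambda\otimes\mu)(x\otimes y)=\sum_{i,j}(x_i\otimes y_j)\otimes\phi_i\psi_j$. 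These are equivalent formulations for finite-dimensional $H$, so the two arguments are essentially the same.
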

\begin{proof}It is clear that $\pi\otimes\id_Y$ is an $A-C$-equivalence bimodule isomorphism of $X\otimes_A Y$
onto $X' \otimes_A Y$. We have only to show that
$$
(\lambda' \otimes\mu)\circ(\pi\otimes\id_Y )=(\pi\otimes\id_Y \otimes\id_{H^0})\circ(\lambda\otimes \mu) .
$$
That is, we show that
$$
h\cdot_{\lambda' \otimes\mu}(\pi\otimes\id_Y )(x\otimes y)=(\pi\otimes\id_Y )(h\cdot_{\lambda\otimes\mu}(x\otimes y))
$$
for any $x\in X$, $y\in Y$ and $h\in H$. Indeed, since
\begin{align*}
h\cdot_{\lambda' \otimes\mu}(\pi\otimes\id)(x\otimes y) & =[h_{(1)}\cdot_{\lambda'}\pi(x)]\otimes[h_{(2)}\cdot_{\mu}y] \\
& =\pi([h_{(1)}\cdot_{\lambda} x])\otimes[h_{(2)}\cdot_{\mu}y] \\
& =(\pi\otimes\id_Y )([h_{(1)}\cdot_{\lambda}x]\otimes[h_{(2)}\cdot_{\mu}y])
\end{align*}
for any $x\in X$, $y\in Y$, $h\in H$. Therefore, we obtain the conclusion.
\end{proof}

Similarly, we obtain the following lemma:

\begin{lemma}\label{lem:similar}With the above notations, let $\mu'$ be a twisted coaction of $H^0$ on
a $B-C$-equivalence bimodule $Y'$ with respect to $(B, C, \sigma, v, \gamma, w)$.
We suppose that there is a $B-C$-equivalence bimodule isomorphism $\pi$ of $Y$
onto $Y'$ such that $\mu' \circ \pi=(\pi\otimes\id_{H^0})\circ\mu$. Then $\id_X \otimes\pi$ is
an $A-C$-equivalence bimodule isomorphism of $X\otimes_A Y$ onto $X\otimes_A Y'$
such that 
$$
(\lambda\otimes\mu' )\circ(\id_X \otimes\pi)=(\id_X \otimes\pi\otimes\id_{H^0})\circ(\lambda\otimes\mu' ) .
$$
\end{lemma}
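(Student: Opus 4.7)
The plan is to mirror the proof of Lemma \ref{lem:define}, interchanging the roles of the two tensor factors. That $\id_X \otimes \pi$ is an $A$-$C$-equivalence bimodule isomorphism from $X \otimes_A Y$ onto $X \otimes_A Y'$ is immediate from $\pi$ being a $B$-$C$-equivalence bimodule isomorphism, so the actual content lies entirely in the intertwining relation for the twisted coactions.

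By the construction of the twisted coaction on a tensor product of equivalence bimodules given in \cite[Proposition 3.7]{KT3:equivalence}, the coaction is determined by its associated $H$-action via the Sweedler formula
$$
h \cdot_{\lambda\otimes\mu} (x\otimes y) = [h_{(1)} \cdot_\lambda x] \otimes [h_{(2)} \cdot_\mu y] .
$$
It therefore suffices to check the desired identity on simple tensors $x\otimes y$ with $x\in X$, $y\in Y$, for every $h\in H$. I would expand $h \cdot_{\lambda\otimes\mu'}(\id_X \otimes \pi)(x\otimes y)$ using the above formula, then invoke the hypothesis $\mu'\circ\pi = (\pi\otimes\id_{H^0})\circ\mu$ in its action-level form $h_{(2)}\cdot_{\mu'}\pi(y)=\pi(h_{(2)}\cdot_\mu y)$ inside the second tensor factor, and finally pull $\pi$ out of the tensor to recognise $(\id_X \otimes\pi)(h\cdot_{\lambda\otimes\mu}(x\otimes y))$. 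That closes the computation.

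There is no real obstacle; the verification is a one-line Sweedler bookkeeping argument, formally identical to the one in Lemma \ref{lem:define} up to swapping which factor the bimodule isomorphism acts on. The one point I would flag while writing is that the right-hand side of the displayed identity in the statement should read $(\id_X \otimes \pi \otimes \id_{H^0})\circ(\lambda\otimes\mu)$, with the unprimed $\mu$, since $\id_X \otimes \pi \otimes \id_{H^0}$ has domain $X\otimes_A Y\otimes H^0$; this appears to be a typographical slip that I would silently correct in the writeup.
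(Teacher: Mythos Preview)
Your proposal is correct and follows precisely the approach the paper intends: the paper gives no separate proof for this lemma, simply stating ``Similarly, we obtain the following lemma,'' so mirroring the Sweedler-notation computation from Lemma~\ref{lem:define} with the roles of the two tensor factors swapped is exactly what is called for. Your observation about the typographical slip in the displayed identity is also correct: the right-hand side should read $(\id_X\otimes\pi\otimes\id_{H^0})\circ(\lambda\otimes\mu)$ so that the domains match.
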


Let $f^0$ be a $C^*$-Hopf algebra automorphism. Let $(\rho, u)$, $(\sigma, v)$ be as above.
Also, let $\lambda'$ and $X'$ be as Lemma \ref{lem:define}.

\begin{lemma}\label{lem:define2}With the above notations, we suppose that there is an $A-B$-equivalence
bimodule isomorphism $\pi$ of $X$ onto $X'$ such that 
$\lambda' \circ\pi=(\pi\otimes\id_{H^0})\circ\lambda$. Then $\lambda_{f^0}'\circ\pi=(\pi\otimes\id_{H^0})\circ\lambda_{f^0}$.
\end{lemma}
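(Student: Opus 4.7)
The plan is to prove this by a direct unfolding of definitions, exploiting that the twist by $f^0$ only touches the $H^0$-factor while $\pi$ only touches the bimodule factor, so the two operations commute trivially on simple tensors.

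First I would recall that by definition $\lambda_{f^0} = (\id_X \otimes f^0)\circ\lambda$ and $\lambda'_{f^0}=(\id_{X'}\otimes f^0)\circ\lambda'$. Then starting from the left-hand side,
\[
\lambda'_{f^0}\circ\pi = (\id_{X'}\otimes f^0)\circ\lambda'\circ\pi.
\]
Applying the hypothesis $\lambda'\circ\pi=(\pi\otimes\id_{H^0})\circ\lambda$, this becomes
\[
(\id_{X'}\otimes f^0)\circ(\pi\otimes\id_{H^0})\circ\lambda.
\]

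Next I would use the elementary fact that for any linear maps $\pi\colon X\to X'$ and $f^0\colon H^0\to H^0$, the compositions $(\id_{X'}\otimes f^0)\circ(\pi\otimes\id_{H^0})$ and $(\pi\otimes\id_{H^0})\circ(\id_X\otimes f^0)$ both agree with $\pi\otimes f^0$ on every elementary tensor (and hence on all of $X\otimes H^0$). Substituting this gives
\[
(\pi\otimes\id_{H^0})\circ(\id_X\otimes f^0)\circ\lambda = (\pi\otimes\id_{H^0})\circ\lambda_{f^0},
\]
which is the desired equality.

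There is no real obstacle here: the statement is essentially the functoriality of the twist $(\,\cdot\,)_{f^0}$ with respect to equivariant bimodule isomorphisms, and the only thing to check is the commutation of $\id\otimes f^0$ with $\pi\otimes\id_{H^0}$, which is automatic. I would just need to be careful, when writing up, to distinguish $\id_X$ from $\id_{X'}$ on the two sides of $\pi$ so the composition makes sense.
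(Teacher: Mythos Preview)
Your proof is correct and follows essentially the same route as the paper: unfold $\lambda'_{f^0}$, apply the hypothesis $\lambda'\circ\pi=(\pi\otimes\id_{H^0})\circ\lambda$, then commute $\id\otimes f^0$ past $\pi\otimes\id_{H^0}$ and repackage as $\lambda_{f^0}$. The paper's argument is exactly this one-line chain of equalities.
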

\begin{proof}
We can prove the lemma by routine computations. Indeed,
\begin{align*}
\lambda_{f^0}' \circ\pi & =(\id\otimes f^0 )\circ\lambda' \circ\pi=(\id\otimes f^0 )\circ(\pi\otimes\id_{H^0})\circ\lambda \\
& (\pi\otimes\id_{H^0})\circ(\id\otimes f^0 )\circ\lambda =(\pi\otimes\id_{H^0})\circ\lambda_{f^0} .
\end{align*}
\end{proof}

\begin{lemma}\label{lem:define3}The product in $\GPic_H^{\rho, u}(A)$ defined by
$$
[X, \lambda, f^0 ][Y, \mu, g^0 ]=[X\otimes_A Y, \lambda\otimes\mu_{f^0}, f^0 \circ g^0 ]
$$
for any $(X, \lambda, f^0 ), (Y, \mu, g^0 )\in \GEqui_H^{\rho, u}(A)$ is well-defined.
\end{lemma}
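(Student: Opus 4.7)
The plan is to verify that the product descends to equivalence classes, i.e., if $(X,\lambda,f^0)\sim(X',\lambda',f^0)$ and $(Y,\mu,g^0)\sim(Y',\mu',g^0)$ in $\GEqui_H^{\rho,u}(A)$, then
\[
(X\otimes_A Y,\lambda\otimes\mu_{f^0},f^0\circ g^0)\sim(X'\otimes_A Y',\lambda'\otimes\mu'_{f^0},f^0\circ g^0).
\]
The third entry $f^0\circ g^0$ is identical on both sides by the definition of $\sim$, so the content is entirely in producing an $A$--$A$-equivalence bimodule isomorphism $X\otimes_A Y\to X'\otimes_A Y'$ that intertwines $\lambda\otimes\mu_{f^0}$ and $\lambda'\otimes\mu'_{f^0}$.

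My strategy is to factor the desired isomorphism through the intermediate bimodule $X'\otimes_A Y$ and apply the three preceding lemmas in sequence. Let $\pi\colon X\to X'$ and $\pi'\colon Y\to Y'$ be the given $A$--$A$-equivalence bimodule isomorphisms satisfying $\lambda'\circ\pi=(\pi\otimes\id_{H^0})\circ\lambda$ and $\mu'\circ\pi'=(\pi'\otimes\id_{H^0})\circ\mu$. First, Lemma \ref{lem:define2} applied to $\pi'$ and $f^0$ yields
\[
\mu'_{f^0}\circ\pi'=(\pi'\otimes\id_{H^0})\circ\mu_{f^0},
\]
so that $\pi'$ still intertwines the $f^0$-twists. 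Second, Lemma \ref{lem:define} applied to $\pi$ (taking the second bimodule to be $(Y,\mu_{f^0})$) shows that $\pi\otimes\id_Y$ is an $A$--$A$-equivalence bimodule isomorphism of $X\otimes_A Y$ onto $X'\otimes_A Y$ intertwining $\lambda\otimes\mu_{f^0}$ with $\lambda'\otimes\mu_{f^0}$. Third, Lemma \ref{lem:similar} applied to $\pi'$ (with left bimodule $X'$ and its twisted coaction $\lambda'$) shows that $\id_{X'}\otimes\pi'$ is an $A$--$A$-equivalence bimodule isomorphism of $X'\otimes_A Y$ onto $X'\otimes_A Y'$ intertwining $\lambda'\otimes\mu_{f^0}$ with $\lambda'\otimes\mu'_{f^0}$.

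Composing, $(\id_{X'}\otimes\pi')\circ(\pi\otimes\id_Y)$ is the required $A$--$A$-equivalence bimodule isomorphism of $X\otimes_A Y$ onto $X'\otimes_A Y'$ intertwining $\lambda\otimes\mu_{f^0}$ with $\lambda'\otimes\mu'_{f^0}$. This establishes the equivalence of the two triplets in $\GEqui_H^{\rho,u}(A)$ and hence the well-definedness of the product.

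The only subtle point, and the one I would be most careful about, is the bookkeeping of \emph{which} twisted coaction sits on $Y$ in the intermediate product: Lemma \ref{lem:define} is about changing the left factor, but here we need it applied to the coaction $\mu_{f^0}$ (not $\mu$) on $Y$; this is legitimate because by the remark preceding the statement, $\mu_{f^0}$ is itself a twisted coaction with respect to $(A,A,\rho_{f^0},u_{f^0},\rho_{f^0\circ g^0},u_{f^0\circ g^0})$, and the target algebra of $X'$ matches its source. Likewise, Lemma \ref{lem:similar} is then applied with the pair $(\mu_{f^0},\mu'_{f^0})$, for which the intertwining hypothesis was secured in the first step via Lemma \ref{lem:define2}. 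Everything else is routine.
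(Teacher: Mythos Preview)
Your proof is correct and follows exactly the approach the paper intends: the paper's proof is the single line ``This is immediate by Lemmas \ref{lem:define}, \ref{lem:similar} and \ref{lem:define2},'' and you have simply spelled out how those three lemmas combine --- first twisting the intertwiner on the right factor via Lemma \ref{lem:define2}, then changing the left factor via Lemma \ref{lem:define}, then the right factor via Lemma \ref{lem:similar}, and composing.
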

\begin{proof}This is immediate by Lemmas \ref {lem:define}, \ref {lem:similar} and \ref {lem:define2}.
\end{proof}

We regard $A$ as an $A-A$-equivalence bimodule in the usual way. We sometimes denote it by ${}_A A_A$.
We also regard a twisted coaction $(\rho, u)$ of $H^0$ on $A$ as a twisted coaction of $H^0$ on the $A-A$-
equivalence bimodule ${}_A A_A$ with respect to $(A, A, \rho, u, \rho, u)$. Then $[{}_A A_A , \rho, \id_{H^0}]$
is the unit element in $\GPic_H^{\rho, u}(A)$. Let $(X, \lambda, f^0 )\in\GEqui_H^{\rho, u}(A)$.
Let $\widetilde{\lambda_{(f^0 )^{-1}}}$ be the linear map from $X$ to $\widetilde{X\otimes H^0}$ defined by
$$
\widetilde{\lambda_{(f^0)^{-1}}}(\widetilde{x})=[\lambda_{(f^0 )^{-1}}(x)]^{\widetilde{}}
=((\id\otimes (f^0 )^{-1})\circ\lambda)(x)^{\widetilde{}}
$$
for any $x\in X$. We note that $\widetilde{X\otimes H^0}$ is identified with $\widetilde{X}\otimes H^0$ by the map
$$
\widetilde{X\otimes H^0}\to\widetilde{X}\otimes H^0 : \widetilde{x\otimes\phi}\mapsto\widetilde{x}\otimes\phi^*
$$
as mentioned in \cite [Remarks 4.1 and 4.2]{Kodaka:equivariance}. Hence by easy computations, we can see that
$$
\widetilde{\lambda_{(f^0 )^{-1}}}(\widetilde{x})=((\id\otimes(f^0 )^{-1})\circ\widetilde{\lambda})(\widetilde{x})
$$
for any $x\in X$. Thus by Lemma \ref{lem:equivalence}, $\widetilde{\lambda_{(f^0 )^{-1}}}$ is a coaction of $H^0$
on $\widetilde{X}$ with respect to $(A, A, \rho, u, \rho_{(f^0 )^{-1}}, u_{(f^0 )^{-1}})$ and
$(\widetilde{X}, \widetilde{\lambda_{(f^0 )^{-1}}}, (f^0 )^{-1})\in\GEqui_H^{\rho, u}(A)$. Furthermore, by the product in
$\GPic_H^{\rho, u}(A)$,
\begin{align*}
[X, \lambda, f^0 ][\widetilde{X}, \widetilde{\lambda_{(f^0 )^{-1}}}, (f^0 )^{-1}]
& =[X\otimes_A \widetilde{X}, \lambda\otimes\widetilde{\lambda}, \id_{H^0}]
=[A, \rho, \id_{H^0}] , \\
[\widetilde{X}, \widetilde{\lambda_{(f^0 )^{-1}}}, (f^0 )^{-1}][X, \lambda, f^0 ]
& =[\widetilde{X}\otimes_A X , \widetilde{\lambda_{(f^0 )^{-1}}}\otimes\lambda_{(f^0 )^{-1}}, \id_{H^0} ]
=[A, \rho, \id_{H^0}]
\end{align*}
in $\GPic_H^{\rho, u}(A)$. Indeed, we identify $X\otimes_A \widetilde{X}$ and
$\widetilde{X}\otimes_A X$ with $A$ by the maps
\begin{align*}
\Phi_1 :  &  X\otimes_A \widetilde{X}\to A : x\otimes\widetilde{y}\mapsto {}_A \la x, y \ra , \\
\Phi_2 :  &  \widetilde{X}\otimes_A  X\to A : \widetilde{x}\otimes y\mapsto  \la x, y \ra_A , 
\end{align*}
respectively. Hence for any $h\in H$, $x, y\in X$,
\begin{align*}
\Phi_1 (h\cdot_{\lambda\otimes\widetilde{\lambda}}x\otimes\widetilde{y}) & =\Phi_1 ([h_{(1)}\cdot_{\lambda}x]
\otimes[S(h_{(2)}^* )\cdot_{\lambda}y]^{\widetilde{}}) \\
& ={}_A \la [h_{(1)}\cdot_{\lambda}x] \, ,\, [S(h_{(2)}^* )\cdot_{\lambda}y] \ra  \\
& =h\cdot_{\rho}{}_A \la x, y \ra =h\cdot_{\rho}\Phi_1 (x\otimes\widetilde{y}) , \\
\Phi_2 (h\cdot_{\widetilde{\lambda}_{(f^0 )^{-1}}\otimes\lambda_{(f^0 )^{-1}}}\widetilde{x}\otimes y) & =\Phi_2
([S(h_{(1)}^* )\cdot_{\lambda_{(f^0 )^{-1}}}x]^{\widetilde{}} \, , \, [h_{(2)}\cdot_{\lambda_{(f^0 )^{-1}}}y] ) \\
& =h\cdot_{\rho} \la x, y \ra_A =h\cdot_{\rho}\Phi_2 (\widetilde{x}\otimes y)
\end{align*}
since $\lambda_{(f^0 )^{-1}}$ is a coaction of $H^0$ on $X$ with respect to $(A, A, \rho_{(f^0 )^{-1}}, u_{(f^0 )^{-1}}, \rho, u)$.
Therefore $[\widetilde{X},\, \widetilde{\lambda_{(f^0 )^{-1}}}, \,(f^0 )^{-1}]$ is the inverse element of $[X, \lambda, f^0 ]$
in $\GPic_H^{\rho, u}(A)$. By the above product, $\GPic_H^{\rho, u}(A)$ is a group.
We call it the
\sl
generalized Picard group
\rm
of a twisted coaction of $(\rho, u)$ of $H^0$ on $A$. 

\begin{remark}\label{rem:isom}If $(\rho, u)$ and $(\sigma, \mu)$ are generalized strongly Morita equivalent
twisted coaction of $H^0$ on $A$ and $B$, respectively, then $\GPic_H^{\rho, u}(A)\cong\GPic_H^{\sigma, v}(B)$.
Indeed, since $(\rho, u)$ and $(\sigma, v)$ are generalized strongly Morita equivalent, there are a $C^*$-Hopf
algebra automorphism $g^0$ of $H^0$, an $A-B$-equivalence bimodule $Y$ and a twisted coaction $\mu$
of $H^0$ on $Y$ such that $(A, B, \rho, u, \sigma_{g^0}, v_{g^0}, \mu , H^0 )$ is a twisted covariant system
by Lemmas \ref {lem:equivalence} and \ref {Def:Morita}. We define the map $\Phi$ from $\GPic_H^{\rho, u}(A)$ to
$\GPic_H^{\sigma, v}(B)$ by
$$
\Phi([X, \lambda, f^0 ])=[\widetilde{Y}\otimes_A X\otimes_A Y, \, \widetilde{\mu_{(g^0 )^{-1}}}\otimes\lambda_{(g^0 )^{-1}}
\otimes\mu_{(g^0 )^{-1}\circ f^0 }, \, (g^0 )^{-1}\circ f^0 \circ g^0 ]
$$
for any $(X, \lambda, f^0 )\in \GEqui_H^{\rho, u}(A)$. Then by routine computations, we can see that 
$\Phi$ is an isomorphism of  $\GPic_H^{\rho, u}(A)$ onto $\GPic_H^{\sigma, v}(B)$.
\end{remark}

Let $\Aut(H^0 )$ be the group of all $C^*$-Hopf algebra automorphisms of $H^0$. For any $f^0 \in\Aut (H^0 )$,
let $\Aut_H^{\rho, u, f^0}(A)$ be the set of all automorphisms $\alpha$ of $A$ satisfying that
$$
\rho\circ\alpha=(\alpha\otimes f^0 )\circ\rho =(\alpha\otimes\id)\circ\rho_{f^0} , \quad
(\underline{\alpha}\otimes f^0 \otimes f^0 )(u)=u .
$$
Let $\Aut_H^{\rho, u}(A)=\Aut_H^{\rho, u, \id_{H^0}}(A)$. We note that $\Aut_H^{\rho, u}(A)$ is a subgroup
of $\Aut(A)$.

\begin{remark}\label{rem:auto}(1) Let $f^0, g^0 \in \Aut(H^0 )$. For any $\alpha\in\Aut_H^{\rho, u, f^0 }(A)$,
$\beta\in\Aut_H^{\rho, u, g^0 }(A)$, $\alpha\circ\beta\in \Aut_H^{\rho, u , f^0 \circ g^0 }(A)$ by easy computations.
\newline
(2) By the definitions of $\Pic_H^{\rho, u}(A)$ and $\GPic_H^{\rho, u}(A)$, we have the inclusion map $\jmath$ :
$$
\jmath: \Pic_H^{\rho, u}(A)\to\GPic_H^{\rho, u}(A): [X, \lambda]\mapsto [X, \lambda, \id_{H^0}] .
$$
By easy computations, $\jmath$ is a monomorphism of $\Pic_H^{\rho, u}(A)$ into $\GPic_H^{\rho, u}(A)$.
We regard $\Pic_H^{\rho, u}(A)$ as a subgroup of $\GPic_H^{\rho, u}(A)$ by $\jmath$.
Then $\Pic_H^{\rho, u}(A)$ is a normal subgroup of $\GPic_H^{\rho, u}(A)$ by $\jmath$.
Indeed, for any $[X, \lambda]\in\Pic_H^{\rho, u}(A)$ and $[Y, \mu, f^0 ]\in\GPic_H^{\rho, u}(A)$,
\begin{align*}
[Y, \mu, f^0 ][X, \lambda, \id_{H^0}][Y, \mu, f^0 ]^{-1} & =[Y\otimes_A X , \mu\otimes\lambda_{f^0} , f^0 ]
[\widetilde{Y}, \widetilde{\mu_{(f^0 )^{-1}}}, f^0 ] \\
& =[Y\otimes_A X\otimes_A \widetilde{Y}\, , \, \mu\otimes\lambda_{f^0}
\otimes(\widetilde{\mu_{(f^0)^{-1}}})_{f^0} \, ,\, \id_{H^0}]
\end{align*}
in $\GPic_H^{\rho, u}(A)$. By the definition of $(\widetilde{\mu_{(f^0)^{-1}}})_{f^0}$, for any $y\in Y$,
$$
(\widetilde{\mu_{(f^0)^{-1}}})_{f^0}(\widetilde{y})=((\id\otimes f^0 )\circ(\id\otimes(f^0 )^{-1})\circ\widetilde{\mu})(\widetilde{y})
=\widetilde{\mu}(\widetilde{y}) .
$$
Thus 
$$
[Y, \mu, f^0 ][X, \lambda, \id_{H^0}][Y, \mu, f^0 ]^{-1}
=[Y\otimes_A  X\otimes_A \widetilde{Y}, \, \mu\otimes\lambda_{f^0}\otimes\widetilde{\mu} \, , \id_{H^0}] .
$$
in $\GPic_H^{\rho, u}(A)$. Hence $\Pic_H^{\rho, u}(A)$ is a normal subgroup of $\GPic_H^{\rho, u}(A)$.
Also, let $\eta$ be the map from $\GPic_H^{\rho, u}(A)$ to $\Aut (H^0 )$ defined by
$$
\eta ([X, \lambda, f^0 ])=f^0
$$
for any $[X, \lambda, f^0 ]\in \GPic_H^{\rho, u}(A)$. Then clearly $\eta$ is an epimorphism of $\GPic_H^{\rho, u}(A)$ onto
$\Aut (H^0 )$. By the definition of $\eta$, $\Ima\,\jmath=\Ker \, \eta$. Thus, we have the exact sequence
$$
1\longrightarrow\Pic_H^{\rho, u}(A)\overset{\jmath}\longrightarrow\GPic_H^{\rho, u}(A)\overset{\eta}\longrightarrow
\Aut(H^0 )\longrightarrow 1 .
$$
Furthermore, let $\kappa$ be the map from $\Aut(H^0 )$ to $\GPic_H^{\rho, u}(A)$ defined by
$$
\kappa(f^0 )=[{}_A A_A , \rho, f^0 ]
$$
for any $f^0 \in \Aut (H^0 )$. Then $\kappa$ is a homomorphism of $\Aut (H^0 )$ to
$\GPic_H^{\rho, u}(A)$ with $\eta \circ\kappa=\id$ on $\Aut (H^0 )$. Indeed, 
for any $f^0 , g^0 \in\Aut (H^0 )$,
$$
\kappa(f^0 )\kappa(g^0 )=[{}_A  A_A , \rho, f^0 ][{}_A  A_A , \rho, g^0 ]
=[A\otimes_A A \, , \, \rho\otimes\rho_{f^0} \, , \, f^0 \circ g^0 ]
$$
in $\GPic_H^{\rho, u}(A)$. Let $\pi$ be the $A-A$-equivalence bimodule isomorphism of
$A\otimes_A A$ onto ${}_A  A_A$ defined by $\pi(A\otimes b)=ab$ for any $a, b\in A$.
Then for any $h\in H$, $f^0 , g^0 \in \Aut (H^0 )$,
\begin{align*}
\pi(h\cdot_{\rho\otimes\rho_{f^0}}a\otimes b) &= \pi(h\cdot_{\rho\otimes\rho_{f^0}}ab\otimes 1)
=\pi([h_{(1)}\cdot_{\rho}ab]\otimes[h_{(2)}\cdot_{\rho_{f^0}}1]) \\
& =[h_{(1)}\cdot_{\rho}ab]\epsilon(h_{(2)})=h\cdot_{\rho}ab=h\cdot_{\rho}\pi(a\otimes b) .
\end{align*}
Hence $[A\otimes_A  A \, , \, \rho\otimes\rho_{f^0} \, , \, f^0 \circ g^0 ]=[{}_A A_A \, , \, \rho \, , \, f^0 \circ g^0 ]$ in
$\GPic_H^{\rho, u}(A)$. Thus
$$
\kappa(f^0 )\kappa(g^0 )=\kappa(f^0 \circ g^0 ) .
$$
It follows that $\kappa$ is a homomorphism of $\Aut (H^0 )$ to $\GPic_H^{\rho, u}(A)$. It is clear that
$\eta\circ\kappa=\id$ on $\Aut (H^0 )$. Therefore, $\GPic_H^{\rho, u}(A)$ is isomorphic to
a semi-dierct product of $\Pic_H^{\rho, u}(A)$ by $\Aut (H^0 )$.
\newline
(3) Generally, $\Aut_H^{\rho, u, f^0}(A)\cap\Aut_H^{\rho, u, g^0}(A)\ne\emptyset$ even if $f^0, g^0 \in\Aut(H^0 )$
with $f^0 \ne g^0$. Indeed, let $\rho_{H^0}^A$ be the trivial coaction of $H^0$ on $A$. Then for any $\alpha\in\Aut (A)$,
$f^0 \in\Aut (H^0 )$,
$$
(\rho_{H^0}^A \circ\alpha)(a)=\alpha(a)\otimes 1^0 , \quad
((\alpha\otimes f^0 )\circ\rho_{H^0}^A )(a)=\alpha(a)\otimes 1^0
$$
for any $a\in A$. Hence $\rho_{H^0}^A \circ\alpha=(\alpha\otimes f^0 )\circ\rho_{H^0}^A$. Thus for any $f^0 \in\Aut (H^0 )$,
$\Aut _H^{\rho_{H^0}^A, \, f^0}(A)=\Aut (A)$.
\end{remark}

Modifying \cite {BGR:linking} and \cite {Kodaka:equivariance}, for each $\alpha\in\Aut_H^{\rho, u, f^0}(A)$, we construct
the element $(X_{\alpha}, \lambda_{\alpha}, f^0 )\in\GEqui_H^{\rho, u, f^0}(A)$ as follows:
Let $\alpha\in\Aut_H^{\rho, u, f^0}(A)$.
Let $X_{\alpha}$ be the vector space $A$ with the obvious left action of $A$ on $X_{\alpha}$ and
the obvious left $A$-valued inner product, but we define the right action of $A$ on $X_{\alpha}$
by $x\cdot a=x\alpha(a)$ for
any $x\in X_{\alpha}$, $a\in A$ and define the right $A$-valued inner product by $\la x, y \ra_A =\alpha(x^* y)$ for
any $x, y\in X_{\alpha}$. Then by \cite {BGR:linking}, $X_{\alpha}$ is an $A-A$-equivalence bimodule.
Also, $\rho$ can be regarded as a linear map from $X_{\alpha}$ to an $A\otimes H^0 -A\otimes H^0$-equivalence bimodule
$X_{\alpha}\otimes H^0$. We denote it by $\lambda_{\alpha}$. By easy computations, $\lambda_{\alpha}$ is a twisted
coaction $H^0$ on $X_{\alpha}$ with respect to $(A, A, \rho, u, \rho_{f^0}, u_{f^0})$. Thus we obtain the map $\Phi_{f^0}$:
$$
\Phi_{f^0} : \Aut_H^{\rho, u, f^0}(A)\to \GPic_H^{\rho, u}(A): \alpha\mapsto [X_{\alpha}, \lambda_{\alpha}, f^0 ] .
$$
Let $\mathcal{G}=\cup_{f^0 \in\Aut(H^0 )}\Aut_H^{\rho, u, f^0}(A)$ and 
let $\Phi$ be the map from $\mathcal{G}$ to $\GPic_H^{\rho, u}(A)$ defined by
$$
\Phi (\alpha)=\Phi_{f^0}(\alpha)=[X_{\alpha}, \lambda_{\alpha}, f^0 ]
$$
for any $\alpha\in\Aut_H^{\rho, u, f^0}(A)$, where $f^0 \in\Aut (H^0 )$.
We note that by easy computations, $\mathcal{G}$ is a subgroup of $\Aut(A)$.

\begin{lemma}\label{lem:homo}With the above notations, the map $\Phi:
\mathcal{G}\to\GPic_H^{\rho, u}(A)$ is a homomorphism.
\end{lemma}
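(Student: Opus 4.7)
Let $\alpha\in\Aut_H^{\rho,u,f^0}(A)$ and $\beta\in\Aut_H^{\rho,u,g^0}(A)$. By Remark~\ref{rem:auto}(1), $\alpha\circ\beta\in\Aut_H^{\rho,u,f^0\circ g^0}(A)$, whence
$$
\Phi(\alpha\circ\beta)=[X_{\alpha\circ\beta},\lambda_{\alpha\circ\beta},f^0\circ g^0],\qquad \Phi(\alpha)\Phi(\beta)=[X_\alpha\otimes_A X_\beta,\lambda_\alpha\otimes(\lambda_\beta)_{f^0},f^0\circ g^0].
$$
Since the third coordinates already coincide, the lemma reduces to producing an $A$-$A$-equivalence bimodule isomorphism $\pi\colon X_{\alpha\circ\beta}\to X_\alpha\otimes_A X_\beta$ intertwining the coactions in the sense of the equivalence relation defining $\GPic_H^{\rho,u}(A)$.

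Mimicking the verification that $\kappa$ is a homomorphism in Remark~\ref{rem:auto}(2), I would set $\pi(x)=x\otimes 1_A$ for $x\in X_{\alpha\circ\beta}=A$. Compatibility with the obvious left $A$-action is immediate, and for the twisted right action the balancing relation $x\alpha(a)\otimes y=x\otimes ay$ inside $X_\alpha\otimes_A X_\beta$ gives
$$
\pi(x)\cdot a=x\otimes\beta(a)=x\alpha(\beta(a))\otimes 1_A=\pi(x\cdot_{\alpha\circ\beta}a).
$$
Matching of the left and right $A$-valued inner products is then a direct substitution into the standard tensor-product formulas for equivalence bimodules combined with the defining formulas for $X_\alpha$ and $X_\beta$.

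For the equivariance, I would unpack the Sweedler description of the tensor-product coaction from \cite[Proposition 3.7]{KT3:equivalence}: for every $h\in H$,
$$
h\cdot_{\lambda_\alpha\otimes(\lambda_\beta)_{f^0}}(x\otimes 1_A)=[h_{(1)}\cdot_{\lambda_\alpha}x]\otimes[h_{(2)}\cdot_{(\lambda_\beta)_{f^0}}1_A].
$$
Since $\lambda_\beta=\rho$ on the underlying vector space $A$ and $\rho(1_A)=1_A\otimes 1^0$ while $f^0$ fixes $1^0$, one has $(\lambda_\beta)_{f^0}(1_A)=1_A\otimes 1^0$, so $h_{(2)}\cdot_{(\lambda_\beta)_{f^0}}1_A=\epsilon(h_{(2)})1_A$. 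The counit identity $h_{(1)}\epsilon(h_{(2)})=h$ then collapses the right-hand side to $[h\cdot_\rho x]\otimes 1_A=\pi(h\cdot_{\lambda_{\alpha\circ\beta}}x)$, since $\lambda_{\alpha\circ\beta}$ is likewise $\rho$ viewed on $X_{\alpha\circ\beta}$. This is exactly the required intertwining.

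The main obstacle I anticipate is bookkeeping rather than depth: $\lambda_\alpha$, $\lambda_\beta$, and $\lambda_{\alpha\circ\beta}$ are literally the same linear map $\rho$ on $A$, yet their target equivalence bimodules carry three different twisted right-$A$ structures, and similarly for the right $A$-valued inner products, which are twisted respectively by $\alpha$, $\beta$, and $\alpha\circ\beta$. Once these are kept straight the bimodule verification is routine, and the coaction intertwining is the short counit calculation above.
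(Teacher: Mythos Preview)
Your proposal is correct and follows essentially the same approach as the paper. The only cosmetic difference is the direction of the isomorphism: the paper defines $\pi\colon X_\alpha\otimes_A X_\beta\to X_{\alpha\circ\beta}$ by $\pi(x\otimes y)=x\alpha(y)$ and verifies equivariance at a general tensor using the defining relation $\rho\circ\alpha=(\alpha\otimes\id)\circ\rho_{f^0}$, whereas you use the inverse map $x\mapsto x\otimes 1_A$ and shortcut the equivariance check via the counit identity at $1_A$; the two computations are equivalent once one notes that every simple tensor in $X_\alpha\otimes_A X_\beta$ can be rewritten as $x\otimes 1_A$.
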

\begin{proof}Let $\alpha\in\Aut_H^{\rho, u, f^0}(A)$ and $\beta\in\Aut_H^{\rho, u, g^0}(A)$. Then
$\alpha\circ\beta\in\Aut_H^{\rho, u, f^0 \circ g^0}(A)$. Also,
$$
\Phi(\alpha)\Phi(\beta) =[X_{\alpha}, \lambda_{\alpha}, f^0 ][X_{\beta}, \lambda_{\beta}, g^0 ] 
=[X_{\alpha}\otimes_A X_{\beta}\, , \, \lambda_{\alpha}\otimes(\lambda_{\beta})_{f^0}\, , \, f^0 \circ g^0 ] .
$$
Let $\pi$ be the map from $X_{\alpha}\otimes_A X_{\beta}$ to $X_{\alpha\circ\beta}$ defined by
$$
\pi(x\otimes y)=x\alpha(y)
$$
for any $x\in X_{\alpha}$, $y\in X_{\beta}$. By easy computations, $\pi$ is an $A-A$-equivalence bimodule isomorphism
of $X_{\alpha}\otimes_A X_{\beta}$ onto $X_{\alpha\circ\beta}$.
Also, for any $h\in H$, $x, y\in A$,
\begin{align*}
\pi(h\cdot_{\lambda_{\alpha}\otimes(\lambda_{\beta})_{f^0}}x\otimes y)
& =\pi([h_{(1)}\cdot_{\lambda_{\alpha}}x]\otimes[h_{(2)}\cdot_{(\lambda_{\beta})_{f^0}}y]) \\
& =\pi([h_{(1)}\cdot_{\rho}x]\otimes[h_{(2)}\cdot_{\rho_{f^0}}y]) \\
& =[h_{(1)}\cdot_{\rho}x]\alpha([h_{(2)}\cdot_{\rho_{f^0}}y]) .
\end{align*}
Since $\rho\circ\alpha=\alpha\circ\rho_{f^0}$, we can see that
$$
\pi(h\cdot_{\lambda_{\alpha}\otimes(\lambda_{\beta})_{f^0}}x\otimes y) =[h_{(1)}\cdot_{\rho} x][h_{(2)}\cdot_{\rho}\alpha(y)] 
=h\cdot_{\rho}x\alpha(y)=h\cdot_{\rho}\pi(x\otimes y) .
$$
Hence
$$
\Phi(\alpha)\Phi(\beta)=[X_{\alpha\circ\beta} \, , \, \lambda_{\alpha\circ\beta} \, , \, f^0 \circ g^0 ]=\Phi(\alpha\circ\beta) .
$$
Therefore, we obtain the conclusion.
\end{proof}

Let $\Int_H^{\rho, u}(A)$ be the group of all generalized inner automorphisms $\Ad(v)$ satisfying that $v$ is a unitary
element in $M(A)$ with $\underline{\rho}(v)=v\otimes 1^0$ and that
$u(v\otimes 1^0 \otimes 1^0 )=(v\otimes 1^0 \otimes 1^0 )u$.
By easy computations, $\Int_H^{\rho, u}(A)$ is a normal subgroup of $\mathcal{G}$.
Indeed, for any $f^0\in\Aut(H^0 )$, $\alpha\in\Aut_H^{\rho, u, f^0}(A)$ and $\Ad(v)\in\Int_H^{\rho, u}(A)$,
$$
\alpha\circ\Ad(v)\circ\alpha^{-1}=\Ad(\underline{\alpha}(v))\circ\alpha\circ\alpha^{-1}=\Ad(\underline{\alpha}(v)) .
$$
Also,
\begin{align*}
\underline{\rho}(\underline{\alpha}(v)) & =((\underline{\alpha}\otimes\id)\circ\underline{\rho})(v)
=(\underline{\alpha}\otimes\id)(v\otimes 1^0 )=\underline{\alpha}(v)\otimes 1^0 , \\
u(\underline{\alpha}(v)\otimes 1^0 \otimes 1^0 ) & =(\underline{\alpha}\otimes f^0 \otimes f^0 )(u)
(\underline{\alpha}\otimes f^0 \otimes f^0 )(v\otimes 1^0 \otimes 1^0 ) \\
& =(\underline{\alpha}\otimes f^0 \otimes f^0 )(u(v\otimes 1^0 \otimes 1^0 )) \\
& =(\underline{\alpha}\otimes f^0 \otimes f^0 )((v\otimes 1^0 \otimes 1^0 )u) \\
& =(\underline{\alpha}(v)\otimes 1^0 \otimes 1^0 )u .
\end{align*}
Hence $\Ad(\underline{\alpha}(v))\in\Int_H^{\rho, u}(A)$. Thus $\Int_H^{\rho, u}(A)$ is a normal subgroup
of $\mathcal{G}$. We denote by $\imath$ the inclusion map from $\Int_H^{\rho, u}(A)$
to $\mathcal{G}$.

\begin{lemma}\label{lem:exact1}With the above notations, we have the exact sequence
$$
1\longrightarrow\Int_H^{\rho, u}(A)\overset{\imath}{\longrightarrow}\mathcal{G}
\overset{\Phi}{\longrightarrow}\GPic_H^{\rho, u}(A) .
$$
\end{lemma}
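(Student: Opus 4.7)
The plan is to verify three things: first, that $\imath$ is injective, which is immediate since it is an inclusion of groups; second, that $\Ima\imath\subseteq\Ker\Phi$; and third, that $\Ker\Phi\subseteq\Ima\imath$. The essential mechanism in both nontrivial inclusions will be the identification of an equivariant bimodule isomorphism $X_{\alpha}\to{}_A A_A$ with the map ``right multiplication by $v$'' for the implementing unitary $v$ of $\alpha=\Ad(v)$.

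For $\Ima\imath\subseteq\Ker\Phi$, I would fix $\Ad(v)\in\Int_H^{\rho, u}(A)$ and propose the map $\pi: X_{\Ad(v)}\to{}_A A_A$ given by $\pi(x)=xv$. Using the unitarity of $v$, short computations should verify that $\pi$ is an $A$-$A$-equivalence bimodule isomorphism: left $A$-linearity is evident, right $A$-linearity follows from $\pi(x\cdot a)=\pi(xvav^*)=xva=\pi(x)\cdot a$, and preservation of the left $A$-valued inner product reduces to $vv^*=1$. The equivariance $\rho\circ\pi=(\pi\otimes\id_{H^0})\circ\lambda_{\Ad(v)}$ (recall $\lambda_{\Ad(v)}=\rho$ on $X_{\Ad(v)}$) should unwind to $\rho(x)\underline{\rho}(v)=\rho(x)(v\otimes 1^0)$ for all $x\in A$, which is exactly the first defining condition $\underline{\rho}(v)=v\otimes 1^0$ of $\Int_H^{\rho, u}(A)$. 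Hence $[X_{\Ad(v)}, \lambda_{\Ad(v)}, \id_{H^0}]=[{}_A A_A, \rho, \id_{H^0}]$, so $\Phi(\Ad(v))$ is the unit of $\GPic_H^{\rho, u}(A)$.

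For the reverse inclusion, I would suppose $\alpha\in\Aut_H^{\rho, u, f^0}(A)$ with $\Phi(\alpha)=[{}_A A_A, \rho, \id_{H^0}]$. By the definition of $\sim$ on $\GEqui_H^{\rho, u}(A)$, one immediately gets $f^0=\id_{H^0}$ and an $A$-$A$-equivalence bimodule isomorphism $\pi: X_{\alpha}\to{}_A A_A$ intertwining $\lambda_{\alpha}$ and $\rho$. Because both left $A$-actions coincide with left multiplication, $\pi$ is left-$A$-linear; setting $v:=\pi(1_A)\in A$ (the unital hypothesis keeps us inside $A$) yields $\pi(x)=xv$. Preservation of the left inner product forces $vv^*=1$, and bijectivity of $\pi$ then forces $v^*v=1$, so $v$ is a unitary in $A$. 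The intertwining of the right actions becomes $x\alpha(a)v=xva$ for all $x, a\in A$, giving $\alpha=\Ad(v)$. The equivariance of $\pi$ reduces to $\underline{\rho}(v)=v\otimes 1^0$, and the twist compatibility $(\underline{\alpha}\otimes\id\otimes\id)(u)=u$, applied with $\alpha=\Ad(v)$, is exactly $u(v\otimes 1^0\otimes 1^0)=(v\otimes 1^0\otimes 1^0)u$. All three defining conditions of $\Int_H^{\rho, u}(A)$ are thus satisfied, so $\alpha=\imath(\Ad(v))$.

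The main obstacle will be the identification step in the reverse inclusion: extracting from a single equivariant bimodule isomorphism the three separate conditions that define $\Int_H^{\rho, u}(A)$. The unital hypothesis plays a crucial role, since it places $v=\pi(1_A)$ directly in $A$ and ensures that any left-$A$-linear map on $A$ is right multiplication by its value at $1_A$; the rest is a matter of tracking which axiom for $\pi$ encodes which of the three conditions in the definition of $\Int_H^{\rho, u}(A)$.
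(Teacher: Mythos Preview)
Your argument is correct. The paper's own proof takes a shorter route: it observes that the equivalence class equality $[X_{\alpha},\lambda_{\alpha},f^0]=[{}_A A_A,\rho,\id_{H^0}]$ forces $f^0=\id_{H^0}$, which lands $\alpha$ back in the ordinary (non-generalized) equivariant setting $\Pic_H^{\rho,u}(A)$, and then simply invokes \cite[Proposition 5.1]{Kodaka:equivariance} for both inclusions $\Ima\,\imath\subset\Ker\,\Phi$ and $\Ker\,\Phi\subset\Ima\,\imath$. What you have done is essentially to unpack that cited proposition in place: your ``right multiplication by $v$'' map and the extraction of $v=\pi(1_A)$ from an equivariant bimodule isomorphism are exactly the content of that earlier result. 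Your approach is more self-contained and makes transparent which axiom of $\pi$ yields which defining condition of $\Int_H^{\rho,u}(A)$; the paper's approach is more economical because the generalized case reduces immediately to the already-known one once $f^0=\id_{H^0}$.

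One small point of scope: the paper phrases $\Int_H^{\rho,u}(A)$ with $v\in M(A)$ and its proof (via the citation) produces $v\in M(A)$, so the lemma is stated without an explicit unitality hypothesis on $A$ at that point in the text. Your argument uses $1_A$ to obtain $v=\pi(1_A)\in A$. Since the paper's overall setting is unital $C^*$-algebras this is harmless, but if you wanted to match the paper's generality you would replace ``$v=\pi(1_A)$'' by the standard observation that a left-$A$-linear bijection $A\to A$ is right multiplication by a multiplier $v\in M(A)$.
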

\begin{proof}We have to show that $\Ima\, \imath=\Ker \, \Phi$. It is clear that $\Ima \, \imath\subset\Ker \, \Phi$
by \cite [Proposition 5.1]{Kodaka:equivariance} and Remark \ref {rem:auto}(2). We show that
$\Ker \, \Phi\subset\Ima \, \imath$. Let $f^0 \in\Aut(H^0 )$ and $\alpha\in\Aut_H^{\rho, u, f^0 }(A)$.
We suppose that $\Phi(\alpha)=[X_{\alpha} \,,\,\lambda_{\alpha}, \, f^0 ]=[{}_A A_A \, ,\, \rho\, , \,\id_{H^0}]$ in
$\GPic_H^{\rho, u}(A)$. Then $f^0 =\id_{H^0}$ and by Remark \ref{rem:auto}(2), $[X_{\alpha}, \lambda_{\alpha}]
=[{}_A A_A, \rho]$ in $\Pic_H^{\rho, u}(A)$. Hence by \cite [Proposition 5.1]{Kodaka:equivariance},
there is a unitary element $v\in M(A)$ such that
$$
\alpha=\Ad(v), \quad \underline{\rho}(v)=v\otimes 1^0 , \quad u(v\otimes 1^0 \otimes 1^0 )=u(v\otimes 1^0 \otimes 1^0 ) .
$$
Hence $\Ker \, \Phi\subset \Ima \, \imath$. Therefore, we obtain the conclusion.
\end{proof}

Let $A$ be a unital $C^*$-algebra and $\BK$ the $C^*$-algebra of all compact operators on a countably infinite
dimensional Hilbert space. Let $A^s =A\otimes\BK$. Let $\rho$ be a coaction of $H^0$ on $A$ and $\rho^s$ the
coaction of $H^0$ on $A^s$ induced by $\rho$, that is, $\rho^s =\rho\otimes\id_{\BK}$. Let $\Phi$ be the map from
$\mathcal{G}$ to $\GPic_H^{\rho^s}(A^s )$ defined in the above.
\par
Modifying the proof of \cite [Lemma 5.4]{Kodaka:equivariance}, we shall show that $\Phi$ is surjective under some
assumptions.
\par
We suppose that $\widehat{\rho}(1\rtimes_{\rho}e)\sim(1\rtimes_{\rho}e)\otimes 1$ in $(A\rtimes_{\rho}H)\otimes H$.
Let $f^0 \in\Aut(H^0 )$. Then $\widehat{\rho_{f^0}}(1\rtimes_{\rho_{f^0}}e)\sim(1\rtimes_{\rho_{f^0}}e)\otimes 1$ in
$(A\rtimes_{\rho_{f^0}}H)\otimes H$, where $\widehat{\rho_{f^0}}$ is the dual coaction of $\rho_{f^0}$.
Indeed, by \cite [Lemma 6.1]{KT5:Hopf}, there is an isomorphism $\pi$ of
$A\rtimes_{\rho}H$ onto $A\rtimes_{\rho_{f^0}}H$ defined by
$$
\pi(a\rtimes_{\rho}h)=a\rtimes_{\rho_{f^0}}f(h)
$$
for any $a\in A$, $h\in H$, which satisfies that
$$
\widehat{\rho_{f^0}}\circ\pi=(\pi\otimes\id_{H^0})\circ(\id_{A\rtimes_{\rho}H}\otimes f)\circ\widehat{\rho} ,
$$
where $f$ is the $C^*$-Hopf algebra automorphism of $H$ induced by $f^0$, that is
$$
f^0 (\phi)(h)=\phi(f^{-1}(h))
$$
for any $h\in H$, $\phi\in H^0$. Hence since $\pi(1\rtimes_{\rho}e)=1\rtimes_{\rho_{f^0}}f(e)=1\rtimes_{\rho_{f^0}}e$,
\begin{align*}
\widehat{\rho_{f^0}}(1\rtimes_{\rho_{f^0}}e) & =((\pi\otimes\id)
\circ(\id_{A\rtimes_{\rho}H}\otimes f))(\widehat{\rho}(1\rtimes_{\rho}e)) \\
& \sim ((\pi\otimes\id)\circ(\id_{A\rtimes_{\rho}H}\otimes f))((1\rtimes_{\rho}e)\otimes 1) \\
& =(\pi\otimes\id)((1\rtimes_{\rho}e)\otimes 1)=(1\rtimes_{\rho_{f^0}}e)\otimes 1
\end{align*}
in $(A\rtimes_{\rho_{f^0}}H)\otimes H$. It follows that by \cite [Lemma 5.2]{Kodaka:equivariance},
\begin{align*}
\widehat{\rho^s}(1\rtimes_{\rho^s}e) & \sim (1\rtimes_{\rho^s}e)\otimes 1 \quad \text{in} \quad
(M(A^s )\rtimes_{\underline{\rho^s}}H)\otimes H , \\
\widehat{\rho_{f^0}^s}(1\rtimes_{\rho_{f^0}^s}e) & \sim (1\rtimes_{\rho_{f^0}^s}e)\otimes 1 \quad \text{in} \quad
(M(A^s )\rtimes_{\underline{\rho_{f^0}^s}}H)\otimes H .
\end{align*}
Let $[X, \lambda, f^0 ]$ be any element in $\GPic_H^{\rho^s , f^0}(A^s )$. Let
$$
X^{\lambda}=\{x\in X \, | \, \lambda(x)=x\otimes 1^0 \} .
$$
Since $X$ an $A^s -A^s$-eqivalence bimodule, by \cite [Lemma 3.8 and Theorem 4.9]{Kodaka:equivariance},
$X_{\lambda}$ is an $(A^s )^{\rho_{f^0}^s}-(A^s )^{\rho_{f^0}^s}$-equivalence bimodule.
On the other hand, $(A^s )^{\rho_{f^0}^s}=(A^s )^{\rho^s}$. Indeed, for any $a\in (A^s )^{\rho_{f^0}^s}$,
$(\id\otimes f^0 )(\rho^s (a))=a\otimes 1^0 $. Hence $\rho^s (a)=a\otimes 1^0$, that is, $a\in (A^s )^{\rho^s}$.
Also, for any $a\in (A^s )^{\rho^s}$,
$$
\rho_{f^0}^s (a) =(\id\otimes f^0 )(\rho^s (a))=(\id\otimes f^0 )(a\otimes 1^0 )=a\otimes 1^0 .
$$
Hence $a\in (A^s )^{\rho_{f^0}^s}$. It follows that $X^{\lambda}$ is an $(A^s )^{\rho^s}-(A^s )^{\rho^s}$-
equivalence bimodule. Let $C$ be the linking $C^*$-algebra for $X$ and $\gamma$ the coaction of $H^0$
on $C$ induced by $\rho^s$ and $\lambda$. Let $C^{\gamma}$ be the fixed point $C^*$-algebra of $C$
for $\gamma$. Then by \cite [Lemma 4.6]{Kodaka:equivariance}, $C^{\gamma}$ is isomorphic to $C_0$,
the linking $C^*$-algebra for $X^{\lambda}$. We identify $C^{\gamma}$ with $C_0$. Let
$$
p=\begin{bmatrix} 1_A \otimes 1_{M(\BK)} & 0 \\
0 & 0 \end{bmatrix} , \quad
q=\begin{bmatrix} 0 & 0 \\
0 & 1_A \otimes 1_{M(\BK)} \end{bmatrix} .
$$
Since $M(C)^{\underline{\gamma}}=M(C^{\gamma})$ by \cite [Lemmas 2.9 and 4.7]{Kodaka:equivariance},
$p$ and $q$ are full for $C^{\gamma}$ such that $w^* w=p$ , $ww^* =q$. We note that
$w\in M(C)$. Let $\alpha$ be the map on $A^s$
defined by
$$
\alpha(a)=w^* aw=w^* \begin{bmatrix} 0 & 0 \\
0 & a \end{bmatrix}w
$$
for any $a\in A^s$. By routine computations, $\alpha$ is an automorphism of $A^s$. Let $\pi$ be the linear
map from $X$ to $X_{\alpha}$ defined by
$$
\pi(x)=\begin{bmatrix} 0 & x \\
0 & 0 \end{bmatrix}w =p\begin{bmatrix} 0 & x \\
0 & 0 \end{bmatrix}wp
$$
for any $x\in X$. In the same way as in the proof of \cite [Lemma 3.3]{BGR:linking}, we can see that $\pi$ is an
$A^s -A^s$-equivalence bimodule isomorphism of $X$ onto $X_{\alpha}$. For any $a\in A^s$,
\begin{align*}
(\rho^s \circ\alpha)(a) & =\rho^s (w^* aw)=\gamma(w^* \begin{bmatrix} 0 & 0 \\
0 & a \end{bmatrix}w) \\
& =\underline{\gamma}(w^* )\begin{bmatrix} 0 & 0 \\
0 & \rho_{f^0}^s (a) \end{bmatrix}\underline{\gamma}(w) \\
& =(w^* \otimes 1^0 )\begin{bmatrix} 0 & 0 \\
0 &\rho_{f^0}^s (a) \end{bmatrix} (w\otimes 1^0 ) \\
& =(\alpha\otimes\id_{H^0 })(\rho_{f^0}^s (a))
\end{align*}
since $w\in M(C)^{\underline{\gamma}}$. Hence $\alpha\in\Aut_H^{\rho^s , \, f^0}(A)$. Furthermore,
\begin{align*}
(\lambda_{\alpha}\circ\pi)(x) & =\lambda_{\alpha}(\begin{bmatrix} 0 & x \\
0 & 0 \end{bmatrix}w)=\rho^s (\begin{bmatrix} 0 & x \\
0 & 0 \end{bmatrix}w) \\
& =\gamma(\begin{bmatrix} 0 & x \\
0 & 0 \end{bmatrix}w)=\begin{bmatrix} 0 & \lambda(x) \\
0 & 0 \end{bmatrix}(w\otimes 1^0 ) \\
& =(\pi\otimes\id)(\lambda(x)) ,
\end{align*}
where we identify $\BK\otimes H^0$ and $H^0 \otimes\BK$. Thus $\Phi(\alpha)=[X, \lambda, f^0 ]$.
Therefore, we obtain the following lemma:

\begin{lemma}\label{lem:surjection}With the above notations, we suppose that $\widehat{\rho}(1\rtimes_{\rho}e)
\sim(1\rtimes_{\rho}e)\otimes 1$ in $(A\rtimes_{\rho}H)\otimes H$. Then $\Phi$ is a surjective homomorphism of
$\mathcal{G}$ onto $\GPic_H^{\rho^s}(A^s )$.
\end{lemma}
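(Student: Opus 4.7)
The plan is to establish surjectivity of $\Phi$ by reversing the construction: given an arbitrary class $[X,\lambda,f^0]\in\GPic_H^{\rho^s}(A^s)$, I will build an automorphism $\alpha\in\Aut_H^{\rho^s,f^0}(A^s)\subset\mathcal{G}$ with $\Phi(\alpha)=[X,\lambda,f^0]$. Since $\Phi$ is already a homomorphism by Lemma \ref{lem:homo}, surjectivity is the only remaining point, and most of the machinery has been assembled in the discussion immediately preceding the statement.

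First I will pass to the linking $C^*$-algebra $C$ of $X$, equipped with the coaction $\gamma$ of $H^0$ induced by $\rho^s$ and $\lambda$, and consider the two corner projections
\[
p=\begin{bmatrix} 1 & 0 \\ 0 & 0 \end{bmatrix},\qquad q=\begin{bmatrix} 0 & 0 \\ 0 & 1 \end{bmatrix}
\]
in $M(C)$. The hypothesis $\widehat{\rho}(1\rtimes_{\rho}e)\sim(1\rtimes_{\rho}e)\otimes 1$, combined with the transport argument via $f^0$ (using \cite[Lemma 6.1]{KT5:Hopf}) and the stabilization \cite[Lemma 5.2]{Kodaka:equivariance}, already shown above, gives the analogous equivalence at the stabilized level for both $\rho^s$ and $\rho_{f^0}^s$. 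Applying this to the linking algebra and invoking the identifications $(A^s)^{\rho^s}=(A^s)^{\rho_{f^0}^s}$ and $M(C)^{\underline{\gamma}}=M(C^\gamma)$ already verified in the preceding discussion, I produce a partial isometry $w\in M(C)^{\underline{\gamma}}$ with $w^*w=p$ and $ww^*=q$.

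Next I conjugate by $w$ to define $\alpha(a)=w^*aw$ on $A^s$, and $\pi:X\to X_\alpha$ by $\pi(x)=\bigl[\begin{smallmatrix}0 & x \\ 0 & 0\end{smallmatrix}\bigr]w$. A direct matrix computation, reproduced in the paragraphs above, shows that $\alpha\in\Aut_H^{\rho^s,f^0}(A^s)$ thanks to the $\underline{\gamma}$-invariance of $w$, so $\alpha\in\mathcal{G}$. The argument of \cite[Lemma 3.3]{BGR:linking} applied inside the linking algebra then shows that $\pi$ is an $A^s-A^s$-equivalence bimodule isomorphism, and one more application of $\underline{\gamma}(w)=w\otimes 1^0$ yields $\lambda_\alpha\circ\pi=(\pi\otimes\id)\circ\lambda$. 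Therefore $\Phi(\alpha)=[X_\alpha,\lambda_\alpha,f^0]=[X,\lambda,f^0]$, which is the desired conclusion.

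The main obstacle is arranging that the partial isometry implementing $p\sim q$ can be chosen inside the \emph{fixed-point} multiplier algebra $M(C)^{\underline{\gamma}}$ and not merely in $M(C)$; without this invariance the automorphism $\alpha$ produced by conjugation would fail to intertwine $\rho^s$ and $\rho_{f^0}^s$, and $\pi$ would no longer intertwine $\lambda$ and $\lambda_\alpha$. This is precisely the point where the hypothesis $\widehat{\rho}(1\rtimes_{\rho}e)\sim(1\rtimes_{\rho}e)\otimes 1$ is essential, since it is what lets the equivalence be pushed into the fixed-point algebra via \cite[Lemma 5.2]{Kodaka:equivariance}. Once this delicate lifting is secured, the remaining verifications are routine calculations in the stabilized linking algebra.
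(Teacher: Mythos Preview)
Your proposal is correct and follows essentially the same route as the paper: the paper's proof is literally ``This is immediate by the above discussions and Lemma~\ref{lem:homo}'', and what you have written is an accurate summary of those discussions (linking algebra $C$, fixed-point identification $M(C)^{\underline{\gamma}}=M(C^{\gamma})$, the partial isometry $w$ implementing $p\sim q$ there, and the resulting $\alpha$ and $\pi$). The one point you pass over lightly is that the hypothesis is used via \cite[Lemma 3.8 and Theorem 4.9]{Kodaka:equivariance} to make $X^{\lambda}$ an $(A^s)^{\rho^s}$--$(A^s)^{\rho^s}$-equivalence bimodule, which is precisely what forces $p\sim q$ in $M(C^{\gamma})$; but since you explicitly defer to ``the preceding discussion'' where this is spelled out, there is no genuine gap.
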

\begin{proof}This is immediate by the above discussions and Lemma \ref{lem:homo}.
\end{proof}

\begin{prop}\label{prop:exact2}With the above notations, we have the exact sequence
$$
1\longrightarrow\Int_H^{\rho^s}(A^s )\overset{\imath}{\longrightarrow}\mathcal{G}
\overset{\Phi}{\longrightarrow}\GPic_H^{\rho^s}(A^s )\longrightarrow 1 .
$$
\end{prop}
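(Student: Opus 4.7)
The plan is to assemble Proposition \ref{prop:exact2} directly from the two preceding lemmas, without introducing any new construction. First, I apply Lemma \ref{lem:exact1} to the coaction $\rho^s$ on the stabilized algebra $A^s = A\otimes\BK$ in the role of the twisted coaction $(\rho,u)$ on $A$. This immediately yields injectivity of $\imath$ at the left end and the identity $\Ima\,\imath = \Ker\,\Phi$ in the middle, so the sequence is already exact everywhere except possibly at $\GPic_H^{\rho^s}(A^s)$.

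Second, I invoke Lemma \ref{lem:surjection}, whose hypothesis $\widehat{\rho}(1\rtimes_{\rho}e)\sim(1\rtimes_{\rho}e)\otimes 1$ in $(A\rtimes_{\rho}H)\otimes H$ is the standing assumption carried into the proposition, to conclude that $\Phi\colon\mathcal{G}\to\GPic_H^{\rho^s}(A^s)$ is surjective. Together with the previous step, this supplies the ``$\,\to 1$'' on the right and completes the exact sequence
$$1\longrightarrow\Int_H^{\rho^s}(A^s)\overset{\imath}{\longrightarrow}\mathcal{G}\overset{\Phi}{\longrightarrow}\GPic_H^{\rho^s}(A^s)\longrightarrow 1.$$

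The one bookkeeping point to verify is that the groups $\mathcal{G}$ and $\Int_H^{\rho^s}(A^s)$ appearing in Lemmas \ref{lem:exact1} and \ref{lem:surjection} are the same objects being combined here, but this is automatic from the definitions since both are constructed directly out of $\rho^s$ via $\mathcal{G}=\cup_{f^0\in\Aut(H^0)}\Aut_H^{\rho^s,f^0}(A^s)$ and the corresponding generalized inner automorphism group on $A^s$. I anticipate no real obstacle, since every substantive piece of work (the construction of $\alpha$ from a triple $[X,\lambda,f^0]$ via linking-algebra techniques, and the identification of $\Ker\,\Phi$ with $\Int_H^{\rho^s}(A^s)$ via \cite{Kodaka:equivariance}) has already been carried out in Lemmas \ref{lem:exact1} and \ref{lem:surjection}; the proof of the proposition is thus a one-line assembly of these two results.
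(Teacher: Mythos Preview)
Your proposal is correct and matches the paper's own proof, which simply reads ``This is immediate by Lemmas \ref{lem:exact1} and \ref{lem:surjection}.'' You have merely spelled out which lemma contributes which piece of the exact sequence and noted that Lemma \ref{lem:exact1} must be instantiated with $(\rho^s, A^s)$ in place of $(\rho,u,A)$, which is exactly the intended assembly.
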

\begin{proof}
This is immediate by Lemmas \ref{lem:exact1} and \ref{lem:surjection}.
\end{proof}

\section{Crossed products}\label{sec:cross}Let $H$ be a finite dimensional $C^*$-Hopf
algebra with its dual $C^*$-Hopf algebra $H^0$. Let $(\rho, u)$ be a twisted coaction of $H^0$ on a unital
$C^*$-algebra $A$. Let $(X, \lambda, f^0 )\in\GEqui_H^{\rho, u}(A)$. Then $\lambda$ is a twisted
coaction of $H^0$ on $X$ with respect to $(A, A, \rho, u, \rho_{f^0}, u_{f^0})$. Hence by
\cite [Section 4]{Kodaka:equivariance}, $\widehat{\lambda}$ is a coaction of $H$ on $X\rtimes_{\lambda}H$
with respect to $(A\rtimes_{\rho, u}H \, , \, A\rtimes_{\rho_{f^0}, u_{f^0}}H \, , \, \widehat{\rho} \, , \, \widehat{\rho_{f^0}})$,
where $\widehat{\rho}$ and $\widehat{\rho_{f^0}}$ are dual coactions of $(\rho, u)$ and $(\rho_{f^0}, u_{f^0})$,
which are coactions of $H$ on $A\rtimes_{\rho, u}H$ and $A\rtimes_{\rho_{f^0}, u_{f^0}}H$, respectively.
Also, by \cite [Lemma 6.1]{KT5:Hopf}, there is an isomorphism $\pi$ of $A\rtimes_{\rho, u}H$ onto
$A\rtimes_{\rho_{f^0}, u_{f^0}}H$ such that 
$$
\widehat{\rho_{f^0}}\circ\pi=(\pi\otimes\id)\circ(\id\otimes f)\circ\widehat{\rho}, \quad
\pi(a\rtimes_{\rho, u}h)=a\rtimes_{\rho_{f^0}, u_{f^0}}f(h) ,
$$
where $a\in A$, $h\in H$ and $f$ is a $C^*$-Hopf algebra automorphism of $H$ induced by $f^0$.
By the above isomorphism $\pi$, we can regard $\widehat{\lambda}$
as a coaction of $H$ on $X\rtimes_{\lambda}H$ with respect to
$(A\rtimes_{\rho, u}H \, , \, A\rtimes_{\rho, u}H \, , \, \widehat{\rho }\, , \, (\widehat{\rho})_f )$,
where $(\widehat{\rho})_f =(\id\otimes f)\circ\widehat{\rho}$. Thus we obtain the element
$$
(X\rtimes_{\lambda}H, \widehat{\lambda}, f)\in\GEqui_{H^0}^{\widehat{\rho}}(A\rtimes_{\rho, u}H) .
$$
Let $F$ be the map from $\GPic_H^{\rho, u}(A)$ to $\GPic_{H^0}^{\widehat{\rho}}(A\rtimes_{\rho, u}H)$
defined by
$$
F([X, \lambda, f^0 ])=[X\rtimes_{\widehat{\lambda}}H, \widehat{\lambda}, f ]
$$
for any $(X, \lambda, f^0 )\in\GEqui_H^{\rho, u}(A)$. We can see that $F$ is well-defined in a straightforward way.
In this section, we show that $F$ is an isomorphism of $\GPic_H^{\rho, u}(A)$ onto
$\GPic_{H^0}^{\widehat{\rho}}(A\rtimes_{\rho, u}H)$.
First, we show that $F$ is a homomorphism of $\GPic_H^{\rho, u}(A)$ to
$\GPic_{H^0}^{\widehat{\rho}}(A\rtimes_{\rho, u}H)$. Let $[X, \lambda, f^0 ], [Y, \mu, g^0 ]\in\GPic_H^{\rho, u}(A)$.
Then
\begin{align*}
F([X, \lambda, f^0 ][Y, \mu, g^0 ]) & =F([X\otimes_A Y \, ,\,  \lambda\otimes\mu_{f^0} \, , \, f^0 \circ g^0 ]) \\
& =[(X\otimes_A Y)\rtimes_{\lambda\otimes\mu_{f^0}}H \, , \, \widehat{\lambda\otimes\mu_{f^0}} \, , \, j ] ,
\end{align*}
where $j$ is the $C^*$-Hopf algebra automorphism of $H$ induced by $f^0 \circ g^0$. Then $j=f\circ g$. Indeed,
for any $h\in H$, $\phi\in H^0$,
$$
\phi(j^{-1}(h))=(f^0 \circ g^0 )(\phi(h))=g^0 (\phi)(f^{-1}(h))=\phi((g^{-1}\circ f^{-1})(h)) .
$$
Hence $j=f\circ g$. Thus
$$
F([X, \lambda, f^0 ][Y, \mu, g^0 ])
=[(X\otimes_A Y)\rtimes_{\lambda\otimes\mu_{f^0}}H \, , \, \widehat{\lambda\otimes\mu_{f^0}} \, , \, f\circ g] .
$$
By \cite [Lemmas, 7.1 and 7.2]{Kodaka:equivariance}, there is an
$A\rtimes_{\rho, u}H-A\rtimes_{\rho_{f^0 \circ g^0}, u_{f^0 \circ g^0}}H$-equivalence
bimodule isomorphism $\Phi$ of $(X\otimes _A Y)\rtimes_{\lambda\otimes\mu_{f^0}}H$
onto $(X\rtimes_{\lambda}H)\otimes_{A\rtimes_{\rho_{f^0}, u_{f^0}}H}(Y\rtimes_{\mu_{f^0}}H)$
such that
$$
\Phi(\phi\cdot_{\widehat{\lambda\otimes\mu_{f^0}}}(x\otimes y\rtimes_{\lambda\otimes\mu_{f^0}}h))
=\phi\cdot_{\widehat{\lambda}\otimes\widehat{\mu_{f^0}}}\Phi(x\otimes y\rtimes_{\lambda\otimes\mu_{f^0}}h)
$$
for any $x\in X$, $y\in Y$, $h\in H$ and $\phi\in H^0$. Then $\Phi$ is defined by
$$
\Phi(x\otimes y\rtimes_{\lambda\otimes\mu_{f^0}}h)=(x\rtimes_{\lambda}1)\otimes (y\rtimes_{\mu_{f^0}}h)
$$
for any $x\in X$, $y\in Y$ and $h\in H$. Let $\pi_A$ (resp. $\pi_A^{g^0}$) be the linear map from $A\rtimes_{\rho, u}H$
(resp. $A\rtimes_{\rho_{f^0 \circ g^0}, u_{f^0 \circ g^0}}H$ ) defined by
\begin{align*}
\pi_A (a\rtimes_{\rho, u}h) & =a\rtimes_{\rho_{f^0}, u_{f^0}}f(h) \\
(\text{resp.} \quad \pi_A^{g^0}(a\rtimes_{\rho_{g^0}, u_{g^0}}h) & =a\rtimes_{\rho_{f^0 \circ g^0}, u_{f^0 \circ g^0}}f(h) \,)
\end{align*}
for any $a\in A$, $h\in H$. Then by \cite [Lemma 6.1]{KT5:Hopf}, $\pi_A$ and $\pi_A^{g^0}$ are isomorphisms of
$A\rtimes_{\rho, u}H$ and $A\rtimes_{\rho_{g^0}, u_{g^0}}H$ onto $A\rtimes_{\rho_{f^0}, u_{f^0}}H$ and
$A\rtimes_{\rho_{f^0 \circ g^0}, u_{f^0 \circ g^0 }}H$, respectively. Also, let $\pi_Y$ be the linear map from
$Y\rtimes_{\mu}H$ to $Y\rtimes_{\mu_{f^0}}H$ defined by
$$
\pi_Y (y\rtimes_{\mu}h)=y\rtimes_{\mu_{f^0}}f(h)
$$
for any $y\in Y$, $h\in H$. Clearly $\pi_Y$ is surjective.

\begin{lemma}\label{lem:condition}With the above notations, the following conditions hold:
\newline
$(1)$ $\pi_Y ((a\rtimes_{\rho, u}h)\cdot (y\rtimes_{\mu}l))=\pi_A (a\rtimes_{\rho, u}h)\cdot \pi_Y (y\rtimes_{\mu}l)$,
\newline
$(2)$ $\pi_Y ((y\rtimes_{\mu}l)\cdot(a\rtimes_{\rho_{g^0}, u_{g^0}}h))
=\pi_Y (y\rtimes_{\mu}l )\cdot\pi_A^{g^0}(a\rtimes_{\rho_{g^0}, u_{g^0}}h)$,
\newline
$(3)$ ${}_{A\rtimes_{\rho_{f^0}, u_{f^0}}H} \la \pi_Y (y\rtimes_{\mu}h) \, , \, \pi_Y (z\rtimes_{\mu}l) \ra
=\pi_A ({}_{A\rtimes_{\rho, u}H} \la y\rtimes_{\mu}h \, , \, z\rtimes_{\mu}l \ra)$,
\newline
$(4)$ $\la \pi_Y (y\rtimes_{\mu}h) \, , \, \pi_Y (z\rtimes_{\mu}l \ra_{A\rtimes_{\rho_{f^0 \circ g^0}, u_{f^0 \circ g^0}}H}
= \pi_A^{g^0}( \la y\rtimes_{\mu}h \, , \, z\rtimes_{\mu}l \ra_{A\rtimes_{\rho_{f^0}, u_{f^0}}H})$,
\newline
$(5)$ $\widehat{\mu_{f^0}}\circ\pi_Y =(\pi_Y\otimes\id_H )\circ(\widehat{\mu})_f$
\newline
for any $a\in A$, $y, z \in Y$, $h, l\in H$.
\end{lemma}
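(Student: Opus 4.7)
The plan is to verify each of the five identities by direct computation from the explicit formulas for the left action, right action, $A$-valued inner products, and dual coaction on the crossed product of an equivalence bimodule by a twisted coaction (given in Section 4 of \cite{Kodaka:equivariance}). The three maps $\pi_A$, $\pi_A^{g^0}$, and $\pi_Y$ all have the same shape: they act as the identity on the $A$ or $Y$ factor and apply the $C^*$-Hopf algebra automorphism $f$ to the $H$-leg. Each identity therefore asserts that this common ``apply $f$'' operation intertwines the crossed product structure, and two compatibilities do all the work.

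The first compatibility is that $f$ is a Hopf algebra automorphism, so $\Delta\circ f = (f\otimes f)\circ\Delta$, giving the Sweedler identity $f(h)_{(1)}\otimes f(h)_{(2)} = f(h_{(1)})\otimes f(h_{(2)})$. The second is that the change of coaction from $\mu$ to $\mu_{f^0}=(\id\otimes f^0)\circ\mu$ translates at the level of actions as $h\cdot_{\mu_{f^0}} y = f^{-1}(h)\cdot_\mu y$ (using the defining relation $f^0(\phi)(h)=\phi(f^{-1}(h))$), together with the analogous $u_{f^0}=(\id\otimes f^0\otimes f^0)(u)$ for the cocycle; these are precisely the identities ensuring that $\pi_A$ and $\pi_A^{g^0}$ are isomorphisms of twisted crossed products via \cite[Lemma 6.1]{KT5:Hopf}. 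With these in hand, conditions $(1)$--$(4)$ each reduce to a matching of Sweedler expressions: on both sides the $h_{(i)}$'s are carried through $f$, and the twisting factor $u$ transports to $u_{f^0}$ (or $u_{f^0\circ g^0}$) in exactly the way demanded by the formula. Condition $(5)$ is the shortest: from $\widehat{\mu}(y\rtimes_\mu h)=\sum(y\rtimes_\mu h_{(1)})\otimes h_{(2)}$ one computes
\begin{align*}
\widehat{\mu_{f^0}}(\pi_Y (y\rtimes_\mu h)) & = \sum (y\rtimes_{\mu_{f^0}} f(h)_{(1)})\otimes f(h)_{(2)} \\
& = \sum (y\rtimes_{\mu_{f^0}} f(h_{(1)}))\otimes f(h_{(2)}) \\
& = ((\pi_Y \otimes\id_H )\circ(\widehat{\mu})_f)(y\rtimes_\mu h) .
\end{align*}

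The main obstacle is not conceptual but notational: keeping track of where $f$ versus $f^{-1}$ is inserted, and how the four twisting cocycles $u$, $u_{f^0}$, $u_{g^0}$, and $u_{f^0\circ g^0}$ interact across the different crossed products appearing in $(1)$--$(4)$. The cleanest route is to write once, in Sweedler notation, the four formulas defining the $A\rtimes_{\rho,u}H$-bimodule structure and inner products on $Y\rtimes_\mu H$, then substitute $\pi_Y$, $\pi_A$, $\pi_A^{g^0}$ on each side and let the two compatibilities above do the cancellation.
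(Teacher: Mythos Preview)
Your proposal is correct and follows essentially the same approach as the paper: the paper also declares $(1)$--$(4)$ to be straightforward consequences of the definitions of $\pi_A$, $\pi_A^{g^0}$, and $\pi_Y$, and spells out only the verification of $(5)$ via the same Sweedler computation you give. Your added remark identifying the two compatibilities (that $f$ is a Hopf algebra automorphism and that $h\cdot_{\mu_{f^0}}y=f^{-1}(h)\cdot_\mu y$) as the reason the straightforward checks go through is a helpful gloss the paper omits.
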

\begin{proof}
We can prove Conditions (1)-(4) in a straightforward way by the definitions of $\pi_A$, $\pi_A^{g^0}$ and $\pi_Y$.
We prove Condition (5). For any $y\in Y$, $h\in H$,
$$
(\widehat{\mu_{f^0}}\circ\pi_Y )(y\rtimes_{\mu}h)=\widehat{\mu_{f^0}}(y\rtimes_{\mu_{f^0}}f(h))
=(y\rtimes_{\mu_{f^0}}f(h_{(1)}))\otimes f(h_{(2)}) .
$$
On the other hand,
\begin{align*}
((\pi_Y \otimes\id_H )\circ(\widehat{\mu})_f )(y\rtimes_{\mu}h) & =((\pi_Y \otimes\id_H )\circ (\id_{Y\rtimes_{\mu}H}\otimes f)
\circ \widehat{\mu})(y\rtimes_{\mu}h) \\
& =((\pi_Y \otimes\id_H )\circ(\id_{Y\rtimes_{\mu}H}\otimes f))((y\rtimes_{\mu}h_{(1)})\otimes h_{(2)}) \\
& =(\pi_Y \otimes \id_H )((y\rtimes_{\mu}h_{(1)})\otimes f(h_{(2)})) \\
& =(y\rtimes_{\mu_f^0}f(h_{(1)}))\otimes f(h_{(2)}) .
\end{align*}
Hence we obtain the conclusion.
\end{proof}

\begin{lemma}\label{lem:homo2}With the above notations, the map $F$ is a homomorphism of $\GPic_H^{\rho, u}(A)$
to $\GPic_{H^0}^{\widehat{\rho}}(A\rtimes_{\rho, u}H)$.
\end{lemma}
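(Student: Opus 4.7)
The plan is to exhibit an explicit equivalence bimodule isomorphism between the underlying bimodules of $F([X,\lambda,f^0][Y,\mu,g^0])$ and $F([X,\lambda,f^0])F([Y,\mu,g^0])$ which intertwines the dual coactions; since the automorphism component of both triplets is visibly $f\circ g$, this will suffice. Unpacking the definitions, one side is
$$
[(X\otimes_A Y)\rtimes_{\lambda\otimes\mu_{f^0}}H,\, \widehat{\lambda\otimes\mu_{f^0}},\, f\circ g],
$$
as recorded just before Lemma \ref{lem:condition}, while the product formula in $\GPic_{H^0}^{\widehat{\rho}}(A\rtimes_{\rho,u}H)$ gives the other side as
$$
[(X\rtimes_{\lambda}H)\otimes_{A\rtimes_{\rho,u}H}(Y\rtimes_{\mu}H),\, \widehat{\lambda}\otimes(\widehat{\mu})_f,\, f\circ g].
$$

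First I would invoke the map $\Phi$ from \cite[Lemmas 7.1, 7.2]{Kodaka:equivariance}, which provides an $A\rtimes_{\rho,u}H - A\rtimes_{\rho_{f^0\circ g^0},u_{f^0\circ g^0}}H$-equivalence bimodule isomorphism
$$
(X\otimes_A Y)\rtimes_{\lambda\otimes\mu_{f^0}}H \;\cong\; (X\rtimes_{\lambda}H)\otimes_{A\rtimes_{\rho_{f^0},u_{f^0}}H}(Y\rtimes_{\mu_{f^0}}H)
$$
intertwining the dual coactions $\widehat{\lambda\otimes\mu_{f^0}}$ and $\widehat{\lambda}\otimes\widehat{\mu_{f^0}}$. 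Second I would use $\pi_Y$ to descend the coefficient ring of the middle tensor product from $A\rtimes_{\rho_{f^0},u_{f^0}}H$ to $A\rtimes_{\rho,u}H$ via $\id_{X\rtimes_\lambda H}\otimes\pi_Y^{-1}$. Lemma \ref{lem:condition}(1) is precisely the relation needed for this map to be well-defined on the balanced tensor product (where the right action of $A\rtimes_{\rho,u}H$ on $X\rtimes_\lambda H$ is pulled back along $\pi_A$), while parts (2)--(4) give that the resulting map is an equivalence bimodule isomorphism, matching the two right actions via $\pi_A^{g^0}$ and matching both $C^*$-valued inner products. For the coactions, part (5) of Lemma \ref{lem:condition} yields, upon inversion, $(\pi_Y^{-1}\otimes\id_H)\circ\widehat{\mu_{f^0}} = (\widehat{\mu})_f\circ\pi_Y^{-1}$, so tensoring with $\widehat{\lambda}$ on the left shows the descent step carries $\widehat{\lambda}\otimes\widehat{\mu_{f^0}}$ to $\widehat{\lambda}\otimes(\widehat{\mu})_f$. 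Composing with $\Phi$ then produces the required isomorphism of triplets.

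The main obstacle will be the bookkeeping of the several crossed product coefficient rings $A\rtimes_{\rho,u}H$, $A\rtimes_{\rho_{f^0},u_{f^0}}H$, $A\rtimes_{\rho_{g^0},u_{g^0}}H$, $A\rtimes_{\rho_{f^0\circ g^0},u_{f^0\circ g^0}}H$ together with the three isomorphisms $\pi_A$, $\pi_A^{g^0}$, $\pi_Y$ among them, and making sure that at each step the balanced tensor products, the right actions, the inner products, and the coactions are all transported consistently. Once this tracking is organized, the five parts of Lemma \ref{lem:condition} package exactly the compatibilities needed, so the argument reduces to composition with the already-established map $\Phi$.
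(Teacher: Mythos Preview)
Your proposal is correct and follows essentially the same route as the paper: both arguments compute the two sides, invoke the isomorphism $\Phi$ from \cite[Lemmas 7.1, 7.2]{Kodaka:equivariance}, and then use the identifications $\pi_A$, $\pi_A^{g^0}$, $\pi_Y$ together with Lemma~\ref{lem:condition} to match the bimodule structures and the coactions. Your write-up is in fact more explicit than the paper's about which part of Lemma~\ref{lem:condition} handles which compatibility, but the underlying argument is the same.
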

\begin{proof}
Let $[X, \lambda, f^0 ], [Y, \mu, g^0 ]\in\GPic_H^{\rho, u}(A)$. Then
$$
F([X, \lambda, f^0 ][Y, \mu, g^0 ])
=[(X\otimes_A Y)\rtimes_{\lambda\otimes\mu_{f^0}}H \, , \, \widehat{\lambda\otimes\mu_{f^0}} \, , \, f\circ g ] .
$$
Also,
\begin{align*}
F([X, \lambda, f^0 ])F([Y, \mu, g^0 ]) & =[X\rtimes_{\lambda}H \, , \, \widehat{\lambda} \, , \, f]
[Y\rtimes_{\mu}H \, , \, \widehat{\mu} \, , \, g ] \\
& =[(X\rtimes_{\lambda}H)\otimes_{A\rtimes_{\rho_{f^0}, u_{f^0}}H}(Y\rtimes_{\mu}H) \, , \, \widehat{\lambda}
\otimes\widehat{\mu_{f^0}} \, , \, f\circ g ] .
\end{align*}
By the discussions before Lemma \ref{lem:condition}, there is an
$A\rtimes_{\rho, u}H-A\rtimes_{\rho_{f^0 \circ g^0}, u_{f^0 \circ g^0}}H$-equivalence bimodule
isomorphism $\Phi$ of $(X\otimes _A Y)\rtimes_{\lambda\otimes\mu_{f^0}}H$ onto
$(X\rtimes_{\lambda}H)\otimes_{A\rtimes_{\rho_{f^0}, u_{f^0}}H}(Y\rtimes_{\mu_{f^0}}H)$
such that
$$
\Phi(\phi\cdot_{\widehat{\lambda\otimes\mu_{f^0}}}((x\otimes y)\rtimes_{\lambda\otimes\mu_{f^0}}h))
=\phi\cdot_{\widehat{\lambda}\otimes\widehat{\mu_{f^0}}}\Phi((x\otimes y)\rtimes_{\lambda\otimes\mu_{f^0}}h)
$$
for any $x\in X$, $y\in Y$, $h\in H$ and $\phi\in H^0$. Since we identify $A\rtimes_{\rho_{f^0}, u_{f^0}}H$
with $A\rtimes_{\rho, u}H$ in the tensor product $(X\rtimes_{\lambda}H)
\otimes_{A\rtimes_{\rho_{f^0}, u_{f^0}}H}(Y\rtimes_{\mu}H)$
by the isomorphism $\pi_A$, by Lemma \ref{lem:condition}, we can see that
$$
F([X, \lambda, f^0 ])F([Y, \mu, g^0 ])=F([X, \lambda, f^0 ][Y, \mu, g^0 ])
$$
Therefore, we obtain the conclusion.
\end{proof}
Next, we construct the inverse homomorphism of $F$ from $\GPic_{H^0}^{\widehat{\rho}}(A\rtimes_{\rho, u}H)$
to $\GPic_H^{\rho, u}(A)$ modifying \cite [Section 9]{Kodaka:equivariance}. By the above discussions, there is the
homomorphism of $\widehat{F}$ of $\GPic_H^{\widehat{\rho}}(A\rtimes_{\rho, u}H)$ to $\GPic_H^{\widehat{\widehat{\rho}}}
(A\rtimes_{\rho, u}H\rtimes_{\widehat{\rho}}H^0 )$ defined by
$$
\widehat{F}([Y, \mu, f])=[Y\rtimes_{\mu}H^0 , \widehat{\mu}, f^0 ]
$$
for any $[Y, \mu, f]\in\GPic_{H^0}^{\widehat{\rho}}(A\rtimes_{\rho, u}H)$. By \cite [Proposition 2.8]{Kodaka:equivariance},
there are an isomorphism $\Psi_A $ of $A\otimes M_N (\BC)$ onto $A\rtimes_{\rho, u}H\rtimes_{\widehat{\rho}}H^0$
and a unitary element $U\in (A\rtimes_{\rho, u}H\rtimes_{\widehat{\rho}}H^0 )\otimes H^0$ such that 
\begin{align*}
\Ad(U)\circ\widehat{\widehat{\rho}} & = (\Psi_A \otimes\id_{H^0})\circ(\rho\otimes\id_{M_N (\BC)})\circ\Psi^{-1} , \\
(\Psi_A \otimes\id_{H^0}\otimes\id_{H^0})(u\otimes I_N) & =(U\otimes 1^0 )(\widehat{\widehat{\rho}}\otimes\id_{H^0})(U)
(\id\otimes\Delta^0 )(U^* ) .
\end{align*}
Let $\overline{\rho}=(\Psi_A^{-1}\otimes\id_{H^0})\circ\widehat{\widehat{\rho}}\circ\Psi_A $. For any $[X, \lambda, f^0 ]
\in\GPic_H^{\widehat{\widehat{\rho}}}(A\rtimes_{\rho, u}H\rtimes_{\widehat{\rho}}H^0 )$, we construct an element
$[X_{\Psi_A} \, , \, \lambda_{\Psi_A} \, , \, f^0 ]\in\GPic_H^{\overline{\rho}}(A\otimes M_N (\BC))$ as follows:
Let $X_{\Psi_A}=X$ as vector spaces. For any $x, y\in X_{\Psi_A}$ and $a\in A\otimes M_N (\BC)$,
\begin{align*}
a\cdot_{\Psi_A} x =\Psi_A (a)\cdot x \quad & , \quad x\cdot_{\Psi_A} a =x\cdot\Psi_A (a) , \\
{}_{A\otimes M_N (\BC)} \la x, y \ra & =\Psi_A^{-1}({}_{A\rtimes_{\rho, u}H\rtimes_{\widehat{\rho}}H^0 } \la x, y \ra) , \\
\la x , y \ra_{A\otimes M_N (\BC)} & =\Psi_A^{-1}(\la x, y \ra_{A\rtimes_{\rho, u}H\rtimes_{\widehat{\rho}}}H^0 ) .
\end{align*}
We regard $\lambda$ as a linear map from $X_{\Psi_A}$ to $X_{\Psi_A}\otimes H^0 $. We denote it by $\lambda_{\Psi_A}$.
Then $(X_{\Psi_A} \, , \, \lambda_{\Psi_A} \, , \, f^0 )\in\GEqui_H^{\overline{\rho}}(A\otimes M_N (\BC))$.
By easy computations, the map
$$
\GPic_H^{\widehat{\widehat{\rho}}}(A\rtimes_{\rho, u}H\rtimes_{\widehat{\rho}}H^0 )\to
\GPic_H^{\overline{\rho}}(A\otimes M_N (\BC)): [X, \lambda, f^0 ]\mapsto [X_{\Psi_A} \, , \, \lambda_{\Psi_A} \, , \, f^0 ]
$$
is well-defined and it is an isomorphism of $\GPic_H^{\widehat{\widehat{\rho}}}(A\rtimes_{\rho, u}H\rtimes_{\widehat{\rho}})$
onto $\GPic_H^{\overline{\rho}}(A\otimes M_N (\BC))$. We denote by $G_1$ the above isomorphism. Furthermore,
the coaction $\overline{\rho}$ of $H^0$ on $A\otimes M_N (\BC)$ is exterior equivalent to the twisted coaction
$(\rho\otimes\id , u\otimes I_N )$ since
\begin{align*}
\rho\otimes\id_{M_N (\BC)} & =(\Psi_A^{-1}\otimes\id_{H^0})\circ\Ad(U)\circ\widehat{\widehat{\rho}}\circ\Psi_A
=\Ad(U_1 )\circ\overline{\rho} , \\
u\otimes I_N & =(U_1 \otimes 1^0 )(\overline{\rho}\otimes\id )(U_1 )(\id\otimes\Delta^0 )(U_1^*) ,
\end{align*}
where $U_1 =(\Psi_A^{-1}\otimes\id_{H^0})(U)$. Hence there is the isomorphism $G_2$ of $\GPic_H^{\overline{\rho}}
(A\otimes M_N (\BC))$ onto $\GPic_H^{\rho\otimes\id_{M_N (\BC)}, u\otimes I_N}(A\otimes M_n (\BC))$ defined by
$$
G_2 ([X, \lambda, f^0 ])=[X, \Ad(U_1 )\circ\lambda, f^0 ]
$$
for any $[X, \lambda, f^0 ]\in\GPic_H^{\overline{\rho}}(A\otimes M_N (\BC))$. Since $(\rho, u)$ is strongly Morita
equivalent to $(\rho\otimes\id_{M_N (\BC)}, u\otimes I_N )$, there is the isomorphism of $G_3$ of $\GPic_H^{\rho, u}(A)$
onto $\GPic_H^{\rho\otimes\id_{M_N (\BC)}, u\otimes I_N }(A\otimes M_N (\BC))$ defined by
$$
G_3 ([X, \lambda, f^0 ])=[X\otimes M_N (\BC)\, , \, \lambda\otimes\id_{M_N (\BC)} \, , \, f^0 ]
$$
for any $[X, \lambda, f^0 ]\in\GPic_H^{\rho, u}(A)$. Let $G=G_3 ^{-1}\circ G_2 \circ G_1 \circ \widehat{F}$.
Thus $G$ is a homomorphism of $\GPic_{H^0}^{\widehat{\rho}}(A\rtimes_{\rho, u}H)$ to $\GPic_H^{\rho, u}(A)$.

\begin{lemma}\label{lem:iso3}With the above notations, $G\circ F=\id$ on $\GPic_H^{\rho, u}(A)$.
\end{lemma}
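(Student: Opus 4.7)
The plan is to unfold the composition $G \circ F = G_3^{-1} \circ G_2 \circ G_1 \circ \widehat{F} \circ F$ applied to a representative $(X, \lambda, f^0) \in \GEqui_H^{\rho, u}(A)$, tracking separately the underlying bimodule/coaction and the $C^*$-Hopf algebra automorphism, and then reduce the bimodule/coaction identification to the non-equivariant Takesaki--Takai type duality already proved in \cite[Section 9]{Kodaka:equivariance}.

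First I would verify that the third component $f^0$ survives the double crossed product unchanged. Applying $F$ produces $[X \rtimes_\lambda H, \widehat{\lambda}, f]$, where $f$ is the $C^*$-Hopf algebra automorphism of $H$ defined from $f^0$ by $\phi(f^{-1}(h)) = f^0(\phi)(h)$. Applying $\widehat{F}$ yields $[(X\rtimes_\lambda H)\rtimes_{\widehat{\lambda}}H^0, \widehat{\widehat{\lambda}}, g^0]$, and the same defining relation, read dually for $f$ in place of $f^0$, forces $g^0 = f^0$. The maps $G_1$, $G_2$, $G_3^{-1}$ act only on the bimodule and its coaction and leave the automorphism component untouched; hence the third slot of $(G \circ F)([X, \lambda, f^0])$ is $f^0$.

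Next I would identify the first two slots. By the definitions of $G_1$, $G_2$, $G_3^{-1}$, the composition sends $[(X\rtimes_\lambda H)\rtimes_{\widehat{\lambda}}H^0, \widehat{\widehat{\lambda}}]$ first to its pullback along $\Psi_A$ as an $A\otimes M_N(\BC)$-bimodule with coaction $\overline{\lambda}=(\Psi_X^{-1}\otimes \id)\circ \widehat{\widehat{\lambda}}\circ \Psi_X$ of $H^0$, then twists by $U_1$ to move from $\overline{\rho}$ to $\rho\otimes\id_{M_N(\BC)}$, and finally descends along the amplification equivalence between $(\rho, u)$ and $(\rho\otimes\id, u\otimes I_N)$. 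This is exactly the chain analyzed (in the non-generalized Picard group context) in \cite[Section 9]{Kodaka:equivariance}, whose conclusion is that the resulting $A-A$-equivalence bimodule equipped with its coaction is isomorphic to $(X, \lambda)$. Invoking that result gives $(G \circ F)([X, \lambda, f^0]) = [X, \lambda, f^0]$.

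The main obstacle I anticipate is purely bookkeeping: checking that the isomorphism $\Phi_X$ of $(X\rtimes_\lambda H)\rtimes_{\widehat{\lambda}}H^0$ onto $X\otimes M_N(\BC)$ constructed in \cite{Kodaka:equivariance} is compatible with the way $f^0$ acts on the coaction side. Since $\Phi_X$ is built from Hopf-algebraic data in $H$ and $H^0$ that are independent of any coaction twist, and since the formula $\lambda_{f^0} = (\id\otimes f^0)\circ\lambda$ only post-composes with $f^0$ in the $H^0$-factor, this compatibility reduces to verifying that $\Phi_X\otimes\id_{H^0}$ intertwines $(\id\otimes f^0)\circ\widehat{\widehat{\lambda}}$ with $(\id\otimes f^0)\circ(\lambda\otimes\id_{M_N(\BC)})$ modulo the $U_1$-twist. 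Once this straightforward functoriality is in hand, the lemma follows.
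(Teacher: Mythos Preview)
Your approach is essentially the paper's: unfold $G_2 \circ G_1 \circ \widehat{F} \circ F$ on a representative, check that the Hopf-automorphism slot returns to $f^0$ after the double dual (the paper records this without comment), and then invoke the Takesaki--Takai type identity $\Ad(U)\circ\widehat{\widehat{\lambda}} = (\Psi_X\otimes\id)\circ(\lambda\otimes\id)\circ\Psi_X^{-1}$ from \cite[Proposition~3.6]{Kodaka:equivariance} to identify the bimodule/coaction pair with $[X\otimes M_N(\BC),\,\lambda\otimes\id] = G_3([X,\lambda,f^0])$.

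One small correction to your last paragraph: the obstacle you anticipate is not there. Throughout $G_1$ and $G_2$ the coaction being carried is $\widehat{\widehat{\lambda}}$ itself (viewed via $\Psi_A$ and then conjugated by $U_1$), never $(\id\otimes f^0)\circ\widehat{\widehat{\lambda}}$. The automorphism $f^0$ sits only in the third component of the triple and is transported passively by each of $G_1$, $G_2$, $G_3^{-1}$; it does not interact with $\Psi_X$ or the $U_1$-twist at all. So there is no extra intertwining relation to check beyond the identity already supplied by \cite[Proposition~3.6]{Kodaka:equivariance}, and the paper's proof does not address one. Once you drop that concern your argument and the paper's coincide.
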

\begin{proof}We prove the lemma modifying \cite [Proposition 7.4]{Kodaka:equivariance}. Let
$[X, \lambda, f^0 ]\in\GPic_H^{\rho, u}(A)$. By the definitions of $F, \widehat{F}, G_1, G_2 $,
\begin{align*}
(G_2 \circ G_1 \circ\widehat{F}\circ F)([X, \lambda, f^0 ]) & =(G_2 \circ G_1 \circ\widehat{F})
([X\rtimes_{\lambda}H \, , \, \widehat{\lambda} \, , \, f ]) \\
& =(G_2 \circ G_1 )([X\rtimes_{\lambda}H\rtimes_{\widehat{\lambda}}H^0 \, , \, \widehat{\widehat{\lambda}} \, , \, f^0 ]) \\
& =G_2 ([(X\rtimes_{\lambda}H\rtimes_{\widehat{\lambda}}H^0 )_{\Psi_A} \, , \, (\widehat{\widehat{\lambda}})_{\Psi_A} \, , \,
f^0 ]) \\
& =[(X\rtimes_{\lambda}H\rtimes_{\widehat{\lambda}}H^0 )_{\Psi_A} \, , \, \Ad(U_1 )
\circ(\widehat{\widehat{\lambda}})_{\Psi_A } \, , \, f^0 ] .
\end{align*}
Let $\Psi_X$ be the linear map from $X\otimes M_N (\BC)$ to $X\rtimes_{\lambda}H\rtimes_{\widehat{\lambda}}H^0 $
defined in \cite [Proposition 3.6]{Kodaka:equivariance} and we regard $\Psi_X$ as an $A\otimes M_N (\BC)
-A\otimes M_N (\BC)$-equivalence bimodule isomorphism of $X\otimes M_N (\BC)$ onto
$(X\rtimes_{\lambda}H\rtimes_{\widehat{\lambda}}H^0 )_{\Psi_A}$. Also, since
$$
\Ad(U)\circ\widehat{\widehat{\lambda}}=
(\Psi_X \otimes \id)\circ(\lambda\otimes\id)\circ\Psi_X^{-1}
$$
by \cite [Proposition 3.6]{Kodaka:equivariance}, for any $x\in A\otimes M_N (\BC)$,
\begin{align*}
(\Ad(U_1 )\circ(\widehat{\widehat{\lambda}})_{\Psi_A})(x) & =U_1 \cdot_{\Psi_A}(\widehat{\widehat{\lambda}})_{\Psi_A}
(x)\cdot_{\Psi_A}U_1^* =U\widehat{\widehat{\lambda}}(x)U^* \\
& =((\Psi_X \otimes\id)\circ(\lambda\otimes\id)\circ\Psi_X^{-1})(x) .
\end{align*}
Thus
$$
[(X\rtimes_{\lambda}H\rtimes_{\widehat{\lambda}}H^0 )_{\Psi_A} \, , \, 
\Ad(U_1 )\circ(\widehat{\widehat{\lambda}})_{\Psi_A} \, , \, f^0 ]=[X\otimes M_N (\BC) \, , \, \lambda\otimes\id \, , \, f^0 ]
$$
in $\GPic_H^{\rho\otimes\id_{M_N (\BC)}, u\otimes I_N}(A\otimes M_N (\BC))$. Since
$$
G_3 ([X, \lambda, f^0 ])=[X\otimes M_N (\BC) \, , \, \lambda\otimes\id_{M_N (\BC)} \, , \, f^0 ]
$$
in $\GPic_H^{\rho\otimes\id_{M_N (\BC)}, u\otimes I_N}(A\otimes M_N (\BC))$, we obtain the conclusion.
\end{proof}

\begin{thm}\label{thm:iso5}Let $H$ be a finite dimensional $C^*$-Hopf algebra
with its dual $C^*$-Hopf algebra $H^0$. Let $(\rho, u)$ be a twisted coaction of $H^0$
on a unital $C^*$-algebra $A$. Then $\GPic_H^{\rho, u}(A)
\cong\GPic_{H^0}^{\widehat{\rho}}(A\rtimes_{\rho, u}H)$, where $\widehat{\rho}$ is the dual coaction
of $(\rho, u)$.
\end{thm}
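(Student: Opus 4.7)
The plan is to show that the homomorphism $F$ of Lemma \ref{lem:homo2} is an isomorphism, with inverse given by the homomorphism $G = G_3^{-1}\circ G_2 \circ G_1 \circ \widehat{F}$ constructed in the discussion preceding Lemma \ref{lem:iso3}. One direction is already established: Lemma \ref{lem:iso3} says $G\circ F=\id$ on $\GPic_H^{\rho,u}(A)$, so $F$ is injective and $G$ is surjective. Everything therefore reduces to verifying the second identity $F\circ G=\id$ on $\GPic_{H^0}^{\widehat{\rho}}(A\rtimes_{\rho,u}H)$.

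To obtain this, I would exploit the symmetry of the setup by applying the entire framework to the dual coaction $\widehat{\rho}$ of $H$ on the unital $C^*$-algebra $B=A\rtimes_{\rho,u}H$, with the roles of $H$ and $H^0$ interchanged. The map analogous to $F$ at this dual level is precisely $\widehat{F}:\GPic_{H^0}^{\widehat{\rho}}(B)\to\GPic_H^{\widehat{\widehat{\rho}}}(B\rtimes_{\widehat{\rho}}H^0)$ that already appears in the construction of $G$, and one obtains a corresponding analog $G'$ of $G$ going back. The proof of Lemma \ref{lem:iso3}, applied verbatim to $(B,\widehat{\rho})$, then yields $G'\circ\widehat{F}=\id$ on $\GPic_{H^0}^{\widehat{\rho}}(B)$.

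The next step is to identify $G'$ with $F$ composed with the Takai-type identifications: the isomorphisms $G_1,G_2,G_3$ assemble into an isomorphism $\GPic_H^{\widehat{\widehat{\rho}}}(B\rtimes_{\widehat{\rho}}H^0)\cong\GPic_H^{\rho,u}(A)$ via $\Psi_A$, the exterior-equivalence twist by the unitary $U_1$, and the strong Morita equivalence of $(A,\rho,u)$ with $(A\otimes M_N(\BC),\rho\otimes\id,u\otimes I_N)$. Chasing a class $[X,\lambda,f^0]\in\GPic_H^{\widehat{\widehat{\rho}}}(B\rtimes_{\widehat{\rho}}H^0)$ through the bimodule isomorphism $\Psi_X$ of \cite[Proposition 3.6]{Kodaka:equivariance} — exactly as was done in the proof of Lemma \ref{lem:iso3} but in the dual direction — will show that $F\circ G_3^{-1}\circ G_2\circ G_1=G'$ as homomorphisms. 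Combining this with $G'\circ\widehat{F}=\id$ gives $F\circ G=F\circ G_3^{-1}\circ G_2 \circ G_1\circ\widehat{F}=G'\circ\widehat{F}=\id$, completing the proof.

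The main obstacle I anticipate is the bookkeeping in this last identification, particularly tracking the $C^*$-Hopf algebra automorphism component $f^0$ through the two levels of crossed-product duality and checking that the twist by $U_1$ and the identification $\Psi_X$ interact compatibly with the coaction $\lambda$ on the equivalence bimodule. This is essentially the same calculation that appears in the proof of Lemma \ref{lem:iso3}, carried out one stage deeper, so no conceptually new ingredient is required beyond careful diagram-chasing of the maps defined in Section \ref{sec:Morita}.
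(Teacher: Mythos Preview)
Your proposal is correct and follows essentially the same approach as the paper, which simply invokes Lemma \ref{lem:iso3} and refers to the proof of \cite[Theorem 7.5]{Kodaka:equivariance} for the remaining direction via the duality argument you describe. One minor simplification: rather than explicitly identifying $G'$ with $F\circ G_3^{-1}\circ G_2\circ G_1$, you can argue more directly that the dual version of Lemma \ref{lem:iso3} gives $\widehat{F}$ injective, hence $G=G_3^{-1}\circ G_2\circ G_1\circ\widehat{F}$ is injective (the $G_i$ being isomorphisms), and since $G\circ F=\id$ already makes $G$ surjective, $G$ is bijective and $F=G^{-1}$.
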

\begin{proof}This is immediate by Lemma \ref{lem:iso3} in the same way as in
the proof of \cite [Theorem 7.5]{Kodaka:equivariance}.
\end{proof}

\section{Preparation}\label{sec:prepararion}In this section, we prepare some results for the next section.
\par
Let $(\rho, u)$ and $(\sigma, v)$ be twisted coactions of $H^0$ on $A$ and let $C=A\rtimes_{\rho, u}H$ and
$D=A\rtimes_{\sigma, v}H$. We suppose that $A' \cap C=\BC1$. We also suppose that
the unital inclusions $A\subset C$ and
$A\subset D$ are strongly Morita equivalent with respect to a $C-D$-equivalence bimodule $Y$ and its
closed subspace ${}_A A_A$. Hence we regard $A$ as a closed subspace of $Y$.
Furthermore, since $A' \cap C=\BC1$,
by \cite [Lemma 3.1]{Kodaka:Picard}, there is the unique conditional expectation $F$ from $Y$ onto $A$ with respect to
$E_1^{\rho, u}$ and $E_1^{\sigma, v}$, where $E_1^{\rho, u}$ and $E_1^{\sigma, v}$ are
the canonical conditional expectations from $C$ and $D$ onto $A$, respectively defined by
$$
E_1^{\rho, u}(a\rtimes_{\rho, u}h)=\tau(h)a , \quad
E_1^{\sigma, v}(a\rtimes_{\sigma, v}h)=\tau(h)a
$$
for any $a\in A$, $h\in H$. By the proof of Rieffel \cite [Proposition 2.1]{Rieffel:rotation}, there is an isomorphism $\Psi$
of $D$ onto $C$ defined by
$$
\Psi(d)={}_C \la 1_A \cdot d \, , \, 1_A \ra
$$
for any $d\in D$, where $1_A$ is the unit element in $A$ and we regard $1_A$
as an element in $Y$. Let $C_{\Psi}$ be the $C-D$-equivalence bimodule induced by $C$ and $\Psi$,
that is, $C_{\Psi}=C$ as vector spaces and the left $C$-action and the left $C$-valued inner
product are defined in the usual way. We define the right $D$-action by $x\cdot d=x\Psi(d)$ for any
$x\in C$, $d\in D$ and define the right $D$-valued inner product by $\la x, y \ra_D =\Psi^{-1}(x^* y)$
for any $x, y\in C$.

\begin{lemma}\label{lem:s-iso}With the above notations, $Y\cong C_{\Psi}$ as $C-D$-equivalence bimodules.
\end{lemma}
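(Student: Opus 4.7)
The plan is to exhibit the isomorphism $\pi\colon C_\Psi \to Y$ given by $\pi(c) = c\cdot 1_A$, where $1_A$ is viewed as an element of $A\subset Y$. The definition of strong Morita equivalence of unital inclusions (from \cite{KT4:morita}, \cite{Kodaka:Picard}) is that $A$ sits inside $Y$ as a closed $A$--$A$-equivalence subbimodule whose $A$-valued inner products are restrictions of the $C$- and $D$-valued inner products on $Y$. Since the canonical $A$-valued inner products on ${}_A A_A$ send $(1_A,1_A)$ to $1_A$, this forces
\[
{}_C\la 1_A,\,1_A\ra = 1_C, \qquad \la 1_A,\,1_A\ra_D = 1_D,
\]
and this pair of identities will drive every subsequent verification.

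Next I would check that $\pi$ is a $C$--$D$-bimodule map. Left $C$-linearity is built into the definition. For right $D$-linearity, since $c\cdot_{C_\Psi} d = c\Psi(d)$, it suffices to verify $\Psi(d)\cdot 1_A = 1_A\cdot d$ in $Y$; but
\[
\Psi(d)\cdot 1_A = {}_C\la 1_A\cdot d,\,1_A\ra\cdot 1_A = (1_A\cdot d)\cdot \la 1_A,\,1_A\ra_D = 1_A\cdot d
\]
by the equivalence bimodule associativity and the identity above. Preservation of the $C$-valued inner product is then just ${}_C\la c\cdot 1_A,\,c'\cdot 1_A\ra = c\cdot{}_C\la 1_A,\,1_A\ra\cdot (c')^* = c(c')^*$, which matches ${}_C\la c,c'\ra$ in $C_\Psi$. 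For the $D$-valued one, applying $\Psi$ and using associativity gives
\[
\Psi\bigl(\la 1_A,\,x\cdot 1_A\ra_D\bigr) = {}_C\bigl\la 1_A\cdot\la 1_A,\,x\cdot 1_A\ra_D,\,1_A\bigr\ra = {}_C\la x\cdot 1_A,\,1_A\ra = x
\]
for any $x\in C$, so $\la 1_A,\,x\cdot 1_A\ra_D = \Psi^{-1}(x)$; specializing to $x=c^*c'$ yields $\la\pi(c),\pi(c')\ra_D = \Psi^{-1}(c^*c') = \la c,c'\ra_D$ in $C_\Psi$.

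Bijectivity then comes essentially for free from the identity $y = {}_C\la y, 1_A\ra\cdot 1_A$, which holds by associativity together with $\la 1_A,1_A\ra_D = 1_D$. This formula says $y = \pi({}_C\la y,1_A\ra)$, giving surjectivity, and if $\pi(c) = 0$ then $c = {}_C\la c\cdot 1_A,\,1_A\ra = 0$, giving injectivity. The only substantive point, really the main obstacle, is confirming the two unit identities ${}_C\la 1_A,1_A\ra = 1_C$ and $\la 1_A,1_A\ra_D = 1_D$ from the conventions for strong Morita equivalence of unital inclusions in the paper (together with invoking $A'\cap C = \BC 1$ to know $\Psi$ is well defined, as was already used to construct it); once these are in hand, every remaining step reduces to a short bimodule manipulation with $\Psi$ expanded via its defining formula.
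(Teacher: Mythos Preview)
Your proof is correct and is essentially the same as the paper's: you construct the map $\pi\colon C_\Psi\to Y$, $c\mapsto c\cdot 1_A$, which is precisely the inverse of the paper's map $\eta\colon Y\to C_\Psi$, $y\mapsto {}_C\la y,1_A\ra$, and both arguments hinge on the identity $\la 1_A,1_A\ra_D=1_D$ together with the bimodule associativity relation. The only cosmetic difference is that the paper verifies inner-product preservation for $\eta$ directly (and omits the explicit bimodule-linearity check, which follows automatically), whereas you work with $\pi$ and also invoke ${}_C\la 1_A,1_A\ra=1_C$; both identities hold for the same reason, namely that the $A$-valued inner products on ${}_AA_A\subset Y$ are by hypothesis the restrictions of the $C$- and $D$-valued ones.
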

\begin{proof}
We note that $y={}_C \la y, 1_A \ra \cdot 1_A$ for any $y\in Y$ since $\la 1_A , 1_A \ra_D =1_A$. Let $\eta$
be the map from $Y$ to $C_{\Psi}$ defined by
$$
\eta(y)={}_C \la y, 1_A \ra
$$
for any $y\in Y$. Then the map $c\in C_{\Psi} \mapsto c\cdot 1_A \in Y$ is the inverse map of
$\eta$. Hence $\eta$ is bijective. Clearly $\eta$ is linear. For any $y, z\in Y$,
\begin{align*}
\la \eta(y), \eta(z) \ra_D & =\Psi^{-1}({}_C \la y, 1_A \ra^* {}_C \la z, 1_A \ra )
=\Psi^{-1}({}_C \la 1_A , y \ra {}_C \la z, 1_A \ra) \\
& =\Psi^{-1}({}_C \la {}_C \la 1_A , y \ra\cdot z \, , \, 1_A \ra ) \\
& =\Psi^{-1}({}_C \la 1_A \cdot \la y, z \ra_D \, , \, 1_A \ra)=\la y, z \ra_D , \\
{}_C \la \eta(y), \eta(z) \ra & ={}_C \la {}_C \la y, 1_A \ra \, , \, {}_C \la z, 1_A \ra \ra
={}_C \la y, 1_A \ra \, {}_C \la 1_A , z \ra \\
& ={}_C \la {}_C \la y, 1_A \ra \cdot 1_A \, , \, z \ra
={}_C \la y\cdot \la 1_A , 1_A \ra_D \, , \, z \ra={}_C \la y, z \ra .
\end{align*}
Hence $\eta$ preserves the left $C$-valued and the right $D$-valued inner products. Therefore, we obtain the conclusion,
\end{proof}
Let $\widehat{\rho}$ and $\widehat{\sigma}$ be the dual coactions of $(\rho, u)$ and $(\sigma, v)$, respectively
and let $C_1 =C\rtimes_{\widehat{\rho}}H^0$, $D_1=D\rtimes_{\widehat{\sigma}}H^0$. Let $Y_1$ be the upward basic
construction of $Y$ for $F$. Let $\widehat{\Psi}$ be the isomorphism of $D_1$ onto $C_1$
defined by
$$
\widehat{\Psi}(T)=\Psi\circ T\circ\Psi^{-1}
$$
for any $T\in \BB_A (D)$, where we regard $C$ and $D$ as a $C_1 -A$-equivalence bimodule and $D_1 -A$-
equivalence bimodule using $E_1^{\rho, u}$ and $E_1^{\sigma, v}$, respectively and we identify $C_1$ and $D_1$
with the $C^*$-algebra $\BB_A (C)$, the $C^*$-algebra of all adjointable right $A$-module maps on $C$ and $\BB_A (D)$,
the $C^*$-algebra of all adjointable right $A$-module maps on $D$, respectively. Then by easy computations,
$$
\widehat{\Psi}|_D =\Psi, \quad \Psi\circ E_2^{\sigma, v}=E_2^{\rho, u}, \quad
\widehat{\Psi}(1\rtimes_{\widehat{\sigma}}\tau)
=1\rtimes_{\widehat{\rho}}\tau ,
$$
where $E_2^{\rho, u}$ and $E_2^{\sigma, v}$ are the conditional expectations from $C_1$
and $D_1$ onto $C$ and $D$, which are defined by
$$
E_2^{\rho, u}(c\rtimes_{\widehat{\rho}}\phi)=\phi(e)c, \quad E_2^{\sigma, v}(d\rtimes_{\widehat{\sigma}}\phi)=\phi(e)d
$$
for any $c\in C$, $d\in D$, $\phi\in H^0$, respectively. Let $\Phi_1$ be the map from $D_1$ to $C_1$ defined by
$$
\Psi_1 (d_1 )={}_{C_1} \la 1_A \cdot d_1 \, , \, 1_A \ra
$$
for any $d_1 \in D_1$. Then by routine computations, $\Psi_1 (d)=d$ for any $d\in D$ and
$E_2^{\rho, u}\circ \Psi_1 =\Psi\circ E_2^{\sigma, v}$. Indeed, we note that $1_A \in A$ is regarded as an element
$$
\sum_{i, j}u_i \otimes F(u_i^* \cdot 1 \cdot v_j )\otimes \widetilde{v_j}
$$
in $Y_1$, where $\{(u_i , u_i ^* )\}$ is a quasi-basis for $E_1^{\rho, u}$ and $\{(v_j , v_j )\}$ is a quasi-basis
for $E_1^{\sigma, v}$. Let $c, d\in D$. Then
\begin{align*}
& \Psi_1 ((c\rtimes_{\widehat{\sigma}}1^0 )(1\rtimes_{\sigma, v}1
\rtimes_{\widehat{\sigma}}\tau)(d\rtimes_{\widehat{\sigma}}1^0 )) \\
& =\sum_{i, j, i_1 , j_1}{}_{C_1} \la u_i \otimes F(u_i^* \cdot 1 \cdot v_j )\otimes[d^* E_1^{\sigma, v}(c^* v_j )]^{\widetilde{}}
\, , \,u_{i_1}\otimes F(u_{i_1}^* \cdot 1\cdot v_{j_1})\otimes \widetilde{v_{j_1}} \ra \\
& =\sum_{i, j, i_1 , j_1}{}_{C_1} \la u_i \cdot {}_A \la F(u_i^* \cdot 1\cdot v_j )\otimes
[d^* E_1^{\sigma, v}(c^* v_j )]^{\widetilde{}}
\, , \, F(u_{i_1}^* \cdot 1\cdot v_{j_1})\otimes\widetilde{v_{j_1}} \ra \, , \, u_{i_1} \ra \\
& =\sum_{i, j, i_1, j_1}{}_{C_1} \la u_i \cdot {}_A \la F(u_i^* \cdot 1\cdot v_j )\cdot \la d^* E_1^{\sigma, v}(c^* v_j )
\, , \, v_{j_1} \ra_A \, , \, F(u_{i_1}^* \cdot 1\cdot v_{j_1}) \ra \, , \, u_{i_1} \ra \\
& =\sum_{i, j, i_1 , j_1 }{}_{C_1} \la u_i \cdot {}_A \la F(u_i^* \cdot 1\cdot v_j )\cdot E_1^{\sigma, v}(E_1^{\sigma, v}(v_j^*c)
dv_{j_1}) \, , \, F(u_{i_1}^* \cdot 1\cdot v_{j_1}) \ra \, , \, u_{i_1} \ra \\
& =\sum_{i, j, i_1 , j_1}{}_{C_1} \la u_i \cdot {}_A \la F(u_i^* \cdot 1\cdot v_j )\cdot E_1^{\sigma, v}(v_j^* c )E_1^{\sigma v}
(dv_{j_1}) \, , \, F(u_{i_1}^* \cdot 1 \cdot v_{j_1}) \ra \, , \, u_{i_1} \ra \\
& =\sum_{i, j, i_1 , j_1}{}_{C_1} \la u_i \cdot {}_A \la F(u_i^* \cdot 1\cdot v_j E_1^{\sigma, v}(v_j^* c)) \, , \, F(u_{i_1}^*
\cdot 1\cdot v_{j_1}E_1^{\sigma, v}(v_{j_1}^* d^* )) \ra \, , \, u_{i_1} \ra \\
& =\sum_{i, i_1}{}_{C_1} \la u_i \cdot {}_A \la F(u_i^* \cdot 1\cdot c) \, , \, F(u_{i_1}^* \cdot 1\cdot d^* ) \ra \, , \, u_{i_1} \ra \\
& =\sum_{i, i_1}{}_{C_1} \la u_i \cdot F(u_i^* \cdot 1\cdot c)F(u_{i_1}^* \cdot 1\cdot d^* )^* \, , \, u_{i_1} \ra \\
& =\sum_{i, i_1}[u_i \cdot F(u_i^* \cdot 1\cdot c)F(u_{i_1}^* \cdot 1 \cdot d^* )^* ]
(1\rtimes_{\rho, u}1\rtimes_{\widehat{\rho}}\tau)u_{i_1}^* .
\end{align*}
Hence
\begin{align*}
& (E_2^{\rho, u}\circ\Psi_1 )((c\rtimes_{\widehat{\sigma}}1^0 )
(1\rtimes_{\sigma, v}1\rtimes_{\widehat{\sigma}}\tau)(d\rtimes_{\widehat{\sigma}}1^0 )) \\
& =\sum_{i, i_1}\frac{1}{N}u_i F(u_i^* \cdot 1\cdot c)F(u_{i_1}^* \cdot 1\cdot d^* )^* u_{1_1}^* \\
& =\frac{1}{N}\sum_{i_1}(1\cdot c)(u_{i_1}F(u_{i_1}^* \cdot 1\cdot d^* ))^*
=\frac{1}{N}(1\cdot c)(1\cdot d^* )^* .
\end{align*}
On the other hand,
\begin{align*}
& (\Psi\circ E_2^{\sigma, v})((c\rtimes_{\widehat{\sigma}}1^0 )(1\rtimes_{\sigma, v}1\rtimes_{\widehat{\sigma}}\tau)
(d\rtimes_{\widehat{\sigma}}1^0 )) \\
& =\frac{1}{N}\Psi((c\rtimes_{\widehat{\sigma}}1^0 )(d\rtimes_{\widehat{\sigma}}1^0 ))
= {}_C \la 1_A \cdot \frac{1}{N}cd \, , \, 1_A \ra \\
& =\frac{1}{N}{}_C \la 1_A \cdot c \, , \, 1_A \cdot d^* \ra =\frac{1}{N}(1_A \cdot c )(1_A \cdot d^* )^* .
\end{align*}
Therefore, $E_1^{\rho, u}\circ\Psi_1 =\Psi\circ E_1^{\sigma, v}$ since $D_1$ is the linear span of elements
$(c\rtimes_{\widehat{\sigma}}1^0 )(1\rtimes_{\widehat{\sigma}}\tau)(d\rtimes_{\widehat{\sigma}}1^0 )$, where
$c, d\in D$.

\begin{lemma}\label{lem:equal}With the above notations, $\widehat{\Psi}=\Psi_1$ on $D_1$.
\end{lemma}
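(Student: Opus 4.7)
My plan has three steps, building on what has already been established in the displayed computation just preceding the lemma.

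First, I would upgrade Lemma \ref{lem:s-iso} to the upward basic construction by observing that $1_A \in A \subset Y \subset Y_1$ remains a cyclic vector for the $C_1$-action on $Y_1$ with $\la 1_A, 1_A \ra_{D_1} = 1_{D_1}$. These properties are inherited from the corresponding facts for $Y$ via the inclusion $Y \subset Y_1$ (since the $D_1$-valued inner product restricts to the $D$-valued one on $Y$, and the identity $y_1 = {}_{C_1}\la y_1, 1_A \ra \cdot 1_A$ follows from $y_1 \cdot \la 1_A, 1_A\ra_{D_1} = {}_{C_1}\la y_1, 1_A \ra \cdot 1_A$). Repeating the argument of Lemma \ref{lem:s-iso} then shows that $\Psi_1 : D_1 \to C_1$ is a unital $*$-isomorphism, realizing $Y_1 \cong (C_1)_{\Psi_1}$ as $C_1$--$D_1$-equivalence bimodules; moreover $\Psi_1|_D = \Psi$.

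Second, since $D \subset D_1$ is the basic construction of the finite-index inclusion $A \subset D$, the algebra $D_1$ is the linear span of $D \cdot (1 \rtimes_{\widehat{\sigma}} \tau) \cdot D$. Because $\widehat{\Psi}$ and $\Psi_1$ are both algebra homomorphisms agreeing on $D$ (both equal $\Psi$ there), reducing the lemma to the single equality $\Psi_1(1 \rtimes_{\widehat{\sigma}} \tau) = 1 \rtimes_{\widehat{\rho}} \tau = \widehat{\Psi}(1 \rtimes_{\widehat{\sigma}} \tau)$ is immediate.

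Third and most substantively, I would establish that last equality as follows. Apply $\Psi_1$ to the Jones projection relation $(1 \rtimes_{\widehat{\sigma}} \tau) x (1 \rtimes_{\widehat{\sigma}} \tau) = E_2^{\sigma, v}(x)(1 \rtimes_{\widehat{\sigma}} \tau)$ (valid for all $x \in D_1$), using multiplicativity of $\Psi_1$ together with the intertwining $E_2^{\rho, u} \circ \Psi_1 = \Psi \circ E_2^{\sigma, v}$ established by the long computation preceding the lemma. This yields
\[
\Psi_1(1 \rtimes_{\widehat{\sigma}} \tau)\, y\, \Psi_1(1 \rtimes_{\widehat{\sigma}} \tau) = E_2^{\rho, u}(y)\, \Psi_1(1 \rtimes_{\widehat{\sigma}} \tau)
\]
for every $y \in C_1$ (using surjectivity of $\Psi_1$). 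Thus $P := \Psi_1(1 \rtimes_{\widehat{\sigma}} \tau)$ satisfies the defining relation of the Jones projection for $E_2^{\rho, u} : C_1 \to C$, exactly as $1 \rtimes_{\widehat{\rho}} \tau$ does. The hypothesis $A' \cap C = \BC 1$, propagated upward through the tower, forces uniqueness of the projection implementing $E_2^{\rho, u}$, giving $P = 1 \rtimes_{\widehat{\rho}} \tau$.

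The main obstacle will be step three: pinning down the uniqueness of the Jones projection in $C_1$ rigorously. If the abstract uniqueness argument proves awkward, a purely computational alternative is available: extend the explicit displayed computation already in the paper (which computed $(E_2^{\rho,u} \circ \Psi_1)$ on a generator $(c \rtimes 1^0)(1 \rtimes_{\widehat\sigma} \tau)(d \rtimes 1^0)$) all the way, and verify directly that $\Psi_1$ applied to this generator equals $\Psi(c)(1 \rtimes_{\widehat{\rho}} \tau) \Psi(d) = \widehat{\Psi}((c \rtimes 1^0)(1 \rtimes_{\widehat\sigma} \tau)(d \rtimes 1^0))$, using the quasi-basis identities $\sum_i u_i F(u_i^*) = 1_C$ and the product law in $C \rtimes_{\widehat{\rho}} H^0$.
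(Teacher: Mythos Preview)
Your reduction in Steps 1--2 matches the paper exactly: both arguments establish that $\Psi_1$ is an isomorphism (by repeating the Rieffel-type argument of Lemma~\ref{lem:s-iso}), note that $\Psi_1|_D=\Psi=\widehat{\Psi}|_D$, and reduce to checking the single equality $\Psi_1(1\rtimes_{\widehat{\sigma}}\tau)=1\rtimes_{\widehat{\rho}}\tau$.

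Your main route through Step 3, however, has a genuine error. The relation $(1\rtimes_{\widehat{\sigma}}\tau)\,x\,(1\rtimes_{\widehat{\sigma}}\tau)=E_2^{\sigma,v}(x)(1\rtimes_{\widehat{\sigma}}\tau)$ is \emph{not} valid for $x\in D_1$: already for $x=1\rtimes_{\widehat{\sigma}}\tau$ the left side is $1\rtimes_{\widehat{\sigma}}\tau$ while the right side is $N^{-1}(1\rtimes_{\widehat{\sigma}}\tau)$. The element $1\rtimes_{\widehat{\sigma}}\tau$ is the Jones projection for the inclusion $A\subset D$, so the correct identity is $(1\rtimes_{\widehat{\sigma}}\tau)\,d\,(1\rtimes_{\widehat{\sigma}}\tau)=E_1^{\sigma,v}(d)(1\rtimes_{\widehat{\sigma}}\tau)$ for $d\in D$, involving $E_1$ rather than $E_2$. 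Consequently the $E_2$-intertwining established before the lemma does not plug in where you want it; to run a Jones-uniqueness argument you would instead need $E_1^{\rho,u}\circ\Psi=E_1^{\sigma,v}$ (which does hold, by uniqueness of the conditional expectation under $A'\cap C=\BC 1$, but is not what you invoked) together with a normalization such as $E_2^{\rho,u}(P)=N^{-1}$ and the relative-commutant condition $C'\cap C_1=\BC 1$.

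The paper in fact takes your ``computational alternative,'' but in a more direct form than the one you sketch: rather than extending the pre-lemma calculation on generators $c\,e\,d$, it evaluates $\Psi_1(1\rtimes_{\widehat{\sigma}}\tau)={}_{C_1}\la 1_A\cdot(1\rtimes_{\widehat{\sigma}}\tau),\,1_A\ra$ by computing $1_A\cdot(1\rtimes_{\widehat{\sigma}}\tau)$ explicitly in $Y_1=C\otimes_A A\otimes_A\widetilde{D}$ via the quasi-basis expansion $1_A=\sum_{i,j}u_i\otimes F(u_i^*\cdot 1_A\cdot v_j)\otimes\widetilde{v_j}$, collapsing it to $1_A\otimes 1_A\otimes\widetilde{1_A}$, and then reading off ${}_{C_1}\la 1_A\otimes 1_A\otimes\widetilde{1_A},\,1_A\otimes 1_A\otimes\widetilde{1_A}\ra=1\rtimes_{\widehat{\rho}}\tau$.
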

\begin{proof}Since $D_1$ is the linear span of elements
$(c\rtimes_{\widehat{\sigma}}1^0 )(1\rtimes_{\widehat{\sigma}}\tau)(d\rtimes_{\widehat{\sigma}}1^0 )$,
where $c, d\in D$ and $\widehat{\Psi}=\Psi_1$ on $D$, we have to show that
$$
\Psi_1 (1\rtimes_{\widehat{\sigma}}\tau)=\widehat{\Psi}(1\rtimes_{\widehat{\sigma}}\tau)=1\rtimes_{\widehat{\rho}}\tau .
$$
Indeed, since $1_A$ is regarded as an element 
$$
\sum_{i, j}u_i \otimes F(u_i^* \cdot 1_A \cdot v_j )\otimes\widetilde{v_j}
$$
in $Y_1 (=C\otimes_A A \otimes_A \widetilde{D})$, where $\{(u_i , u_i^* )\}$ and $\{(v_j , v_j^* )\}$ are as above.
Thus
$$
\Psi_1 (1\rtimes_{\widehat{\sigma}}\tau)={}_C \la 1_A \cdot (1\rtimes_{\widehat{\sigma}}\tau) \, , \, 1_A \ra
={}_C \la 1_A \cdot (1\rtimes_{\widehat{\sigma}}\tau) \, , \, 1_A \cdot (1\rtimes_{\widehat{\sigma}}\tau) \ra .
$$
Here
\begin{align*}
1_A \cdot (1\rtimes_{\widehat{\sigma}}\tau) & =\sum_{i, j}u_i \otimes F(u_i^* \cdot 1_A \cdot v_j )\otimes
\widetilde{v_j}\cdot (1\rtimes_{\widehat{\sigma}}\tau) \\
& =\sum_{i, j}u_i \otimes F(u_i^* \cdot 1_A \cdot v_j )\otimes[(1\rtimes_{\widetilde{\sigma}}\tau)\cdot v_j ]^{\widetilde{}} \\
& =\sum_{i, j}u_i \otimes F(u_i^* \cdot 1_A \cdot v_j E_1^{\sigma, v}(v_j^* ))\otimes \widetilde{1_A} \\
& =\sum_i u_i \otimes F(u_i^* \cdot 1_A )\otimes\widetilde{1_A} \\
& =\sum_i u_i E_1^{\rho, u}(u_i^* )\otimes 1_A \otimes \widetilde{1_A} \\
& =1_A \otimes 1_A \otimes \widetilde{1_A} .
\end{align*}
Hence
$$
\Psi_1 (1\rtimes_{\widetilde{\sigma}}\tau)={}_C \la 1_A \otimes 1_A \otimes \widetilde{1_A} \, , \,
1_A \otimes 1_A \otimes \widetilde{1_A} \ra ={}_C \la 1_A \, , \, 1_A \ra =1\rtimes_{\widehat{\rho}}\tau .
$$
Therefore, we obtain the conclusion.
\end{proof}
By the above lemma, we obtain the following corollary:

\begin{cor}\label{cor:equal2}With the above notations, there is an isomorphism $\widehat{\Psi}$
of $D_1$ onto $C_1$ satisfying that
\begin{align*}
& \widehat{\Psi}|_D =\Psi , \quad \Psi\circ E_2^{\sigma, v}=E_2^{\rho, u}\circ\widehat{\Psi} , \\
& \widehat{\Psi}(1\rtimes_{\widehat{\sigma}}\tau)=1\rtimes_{\widehat{\rho}}\tau , \\
& \widehat{\Psi}(d_1 )={}_{C_1} \la 1_A \cdot d_1 \, , \, 1_A \ra \quad \text{for any $d_1 \in D_1$} .
\end{align*}
\end{cor}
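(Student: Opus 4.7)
The plan is simply to assemble the four stated properties of $\widehat{\Psi}$ from what has already been established in the surrounding discussion; this corollary is a packaging statement rather than a new computation.

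First, recall that $\widehat{\Psi}$ was defined by $\widehat{\Psi}(T)=\Psi\circ T\circ\Psi^{-1}$ under the identification of $C_1$ and $D_1$ with $\BB_A(C)$ and $\BB_A(D)$, respectively. Since $\Psi$ is an isomorphism of $D$ onto $C$ preserving the right $A$-module structures used in the basic construction, conjugation by $\Psi$ gives an isomorphism $\widehat{\Psi}$ of $\BB_A(D)\cong D_1$ onto $\BB_A(C)\cong C_1$. This handles the first bullet (the restriction $\widehat{\Psi}|_D=\Psi$) and the second bullet ($\Psi\circ E_2^{\sigma,v}=E_2^{\rho,u}\circ\widehat{\Psi}$), both of which were already noted via ``easy computations'' right after the definition of $\widehat{\Psi}$ in Section~\ref{sec:prepararion}.

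Next, the value $\widehat{\Psi}(1\rtimes_{\widehat{\sigma}}\tau)=1\rtimes_{\widehat{\rho}}\tau$ was computed in the discussion preceding Lemma~\ref{lem:equal}; it follows from the fact that $1\rtimes_{\widehat{\rho}}\tau$ and $1\rtimes_{\widehat{\sigma}}\tau$ are the Jones projections implementing $E_1^{\rho,u}$ and $E_1^{\sigma,v}$ in the basic construction, together with the compatibility $\Psi\circ E_1^{\sigma,v}=E_1^{\rho,u}\circ\Psi$, which itself is a consequence of $\Psi$ being built from the $C$--$D$-equivalence bimodule structure and the uniqueness of the conditional expectation $F$ on $Y$ (this uniqueness uses $A'\cap C=\BC1$, as invoked via \cite{Kodaka:Picard} Lemma~3.1).

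Finally, the formula $\widehat{\Psi}(d_1)={}_{C_1}\la 1_A\cdot d_1,\,1_A\ra$ for all $d_1\in D_1$ is exactly the identity $\widehat{\Psi}=\Psi_1$ on $D_1$, which is the content of Lemma~\ref{lem:equal}. Thus the corollary is immediate by concatenating these four observations.

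The main (minor) obstacle is making sure the two presentations of $\widehat{\Psi}$ coincide, i.e.\ that the operator-theoretic formula $T\mapsto \Psi\circ T\circ\Psi^{-1}$ agrees with the intrinsic bimodule formula $d_1\mapsto{}_{C_1}\la 1_A\cdot d_1,1_A\ra$; but that agreement is precisely what Lemma~\ref{lem:equal} establishes, so no additional work is needed here.
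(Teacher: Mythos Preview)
Your proposal is correct and follows exactly the paper's approach: the paper does not give a separate proof for this corollary, merely stating ``By the above lemma, we obtain the following corollary,'' and you have correctly identified that the four properties are assembled from the ``easy computations'' following the definition of $\widehat{\Psi}$ together with Lemma~\ref{lem:equal} (which gives $\widehat{\Psi}=\Psi_1$).
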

 
In the same way as Corollary \ref{cor:equal2}, we obtain the following lemma:

\begin{lemma}\label{lem:equal3}With the above notations, let $\widehat{\widehat{\Psi}}$ be the isomorphism of
$D_2 (=D_1 \rtimes_{\widehat{\widehat{\sigma}}}H)$ onto $C_2 (=C_1\rtimes_{\widehat{\widehat{\rho}}}H)$
defined by
$$
\widehat{\widehat{\Psi}}(T)=\widehat{\Psi}\circ T\circ\widehat{\Psi^{-1}}
$$
for any $T\in\BB_D (D_1 )$, where we identify $C_2$ and $D_2$ with $\BB_C (C_1 )$, the $C^*$-algebra
of all adjointable right $C$-module maps on $C_1$ and $\BB_D (D_1 )$, the $C^*$-algebra of all adjointable
right $D$-module maps on $D_1$, respectively and we regard $C_1$ and $D_1$ as a $C_2 -C$-
equivalence bimodule and a $D_2 -D$-equivalence bimodule using the canonical conditional expectations
$E_2^{\rho, u} : C_1\to C$ and $E_2^{\sigma, v} : D_1 \to D$, respectively. Then $\widehat{\widehat{\Psi}}$
satisfying that
\begin{align*}
& \widehat{\widehat{\Psi}}|_{D_1}=\widehat{\Psi} , \quad \widehat{\Psi}\circ E_3^{\sigma, v}=E_3^{\rho, u}
\circ\widehat{\widehat{\Psi}} , \\
& \widehat{\widehat{\Psi}}(1\rtimes_{\widehat{\widehat{\sigma}}}e)=1\rtimes_{\widehat{\widehat{\rho}}}e , \\
& \widehat{\widehat{\Psi}}(d_2 )={}_{C_2} \la 1_A \cdot d_2 \, , \, 1_A \ra \quad\text{for any $d_2 \in D_2$},
\end{align*}
where $E_3^{\rho, u}$ and $E_3^{\sigma, v}$ are the canonical conditional expectations from $C_2$
and $D_2$ onto $C_1$ and $D_1$ defined by
$$
E_3^{\rho, u}(c_1 \rtimes_{\widehat{\widehat{\rho}}}h)=c_1 \tau(h) \,, \quad
E_3^{\sigma, v}(d_1\rtimes_{\widehat{\widehat{\sigma}}}h)=d_1 \tau(h)
$$
for any $c_1 \in C_1$, $d_1\in D_1$, $h\in H$, respectively.
\end{lemma}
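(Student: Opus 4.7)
The plan is to mirror, one level higher in the Jones tower, the argument that produced Corollary \ref{cor:equal2} from Lemma \ref{lem:equal}. The starting point is that, by Corollary \ref{cor:equal2}, we already have an isomorphism $\widehat{\Psi} : D_1 \to C_1$ which is compatible with the inclusions $D \subset D_1$ and $C \subset C_1$, with the conditional expectations $E_2^{\sigma,v}$ and $E_2^{\rho,u}$, and which sends the Jones projection $1 \rtimes_{\widehat{\sigma}} \tau$ to $1 \rtimes_{\widehat{\rho}} \tau$. Thus the pair $(C \subset C_1,\ D \subset D_1)$ is strongly Morita equivalent via the $C_1-D_1$-equivalence bimodule $Y_1$ with closed subspace $D \subset Y_1$ (identified through $\widehat{\Psi}$), exactly as the previous level.

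First I would invoke this setup to form the upward basic construction $Y_2$ of $Y_1$ with respect to the canonical conditional expectations $E_2^{\rho,u}$ and $E_2^{\sigma,v}$, which is a $C_2-D_2$-equivalence bimodule containing $D_1$. Then the conjugation formula $\widehat{\widehat{\Psi}}(T) = \widehat{\Psi} \circ T \circ \widehat{\Psi^{-1}}$ is well-defined as an isomorphism from $\BB_D(D_1) \cong D_2$ onto $\BB_C(C_1) \cong C_1$ because $\widehat{\Psi}$ intertwines the right module structures via $\Psi$ at the first level. That $\widehat{\widehat{\Psi}}|_{D_1} = \widehat{\Psi}$ is then immediate from the usual identification of $D_1 \subset \BB_D(D_1)$ as left multiplications, and $\widehat{\Psi} \circ E_3^{\sigma,v} = E_3^{\rho,u} \circ \widehat{\widehat{\Psi}}$ follows by checking on elements of the form $d_1 \rtimes_{\widehat{\widehat{\sigma}}} h$ using the explicit formulas for $E_3^{\rho,u}$ and $E_3^{\sigma,v}$ together with $\widehat{\widehat{\Psi}}(1 \rtimes_{\widehat{\widehat{\sigma}}} h) = 1 \rtimes_{\widehat{\widehat{\rho}}} h$.

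Next I would verify $\widehat{\widehat{\Psi}}(1 \rtimes_{\widehat{\widehat{\sigma}}} e) = 1 \rtimes_{\widehat{\widehat{\rho}}} e$. The strategy is the same as in Lemma \ref{lem:equal}: express $1_A$ as the canonical element of $Y_2$ built from quasi-bases for $E_2^{\rho,u}$ and $E_2^{\sigma,v}$, apply the defining formula $\widehat{\widehat{\Psi}}(d_2) = {}_{C_2} \langle 1_A \cdot d_2, 1_A \rangle$ to the Jones projection, and simplify using $E_2^{\sigma,v}(1 \rtimes_{\widehat{\sigma}} \tau \cdot x) = $ an expression that collapses the quasi-basis expansion to $1_A \otimes 1_A \otimes \widetilde{1_A}$ in $Y_2$, mirroring the calculation in Lemma \ref{lem:equal}. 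The Jones projection formula then reads ${}_{C_2}\langle 1_A, 1_A\rangle = 1 \rtimes_{\widehat{\widehat{\rho}}} e$ by definition of the upward basic construction at level two.

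Finally, to establish the inner-product formula $\widehat{\widehat{\Psi}}(d_2) = {}_{C_2} \langle 1_A \cdot d_2, 1_A \rangle$ on all of $D_2$, I would argue as in Lemma \ref{lem:equal}: both sides are $\BC$-linear, agree on $D_1$ by Corollary \ref{cor:equal2}, agree on $1 \rtimes_{\widehat{\widehat{\sigma}}} e$ by the previous step, and $D_2$ is linearly spanned by products of the form $(d_1\rtimes_{\widehat{\widehat{\sigma}}}1)(1\rtimes_{\widehat{\widehat{\sigma}}}e)(d_1'\rtimes_{\widehat{\widehat{\sigma}}}1)$. The main obstacle will be this last step — carefully pushing the quasi-basis computation through one more layer of crossed product to confirm that the explicit expansion of $1_A$ in $Y_2$ really does collapse correctly when paired against $1 \rtimes_{\widehat{\widehat{\sigma}}} e$; this is the only place where genuinely new (rather than formally parallel) bookkeeping arises, and it is the analogue of the long computation preceding Lemma \ref{lem:equal}.
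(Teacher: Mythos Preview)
Your overall strategy---reproduce the argument leading to Corollary \ref{cor:equal2} one step higher in the tower---is exactly what the paper intends; indeed the paper gives no proof at all beyond ``in the same way as Corollary \ref{cor:equal2}''. So the plan is correct.

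However, your execution contains a circularity. In the lemma the map $\widehat{\widehat{\Psi}}$ is \emph{defined} by conjugation, $\widehat{\widehat{\Psi}}(T)=\widehat{\Psi}\circ T\circ\widehat{\Psi}^{-1}$, and the inner-product identity $\widehat{\widehat{\Psi}}(d_2)={}_{C_2}\la 1_A\cdot d_2,\,1_A\ra$ is a \emph{conclusion}. In your third paragraph you call the inner-product formula the ``defining formula'' and use it to compute $\widehat{\widehat{\Psi}}(1\rtimes_{\widehat{\widehat{\sigma}}}e)$; then in your fourth paragraph you invoke that computation to prove the inner-product formula. That is circular: to show the conjugation map and the inner-product map agree at the Jones projection, you must evaluate \emph{both} maps there independently. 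The quasi-basis computation you describe (the analogue of Lemma \ref{lem:equal}) evaluates only the inner-product map at $1\rtimes_{\widehat{\widehat{\sigma}}}e$.

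The missing half is short. Under the identification $D_2\cong\BB_D(D_1)$, the Jones projection $1\rtimes_{\widehat{\widehat{\sigma}}}e$ acts on $D_1$ as $E_2^{\sigma,v}$. Hence, from the conjugation definition,
\[
\widehat{\widehat{\Psi}}(1\rtimes_{\widehat{\widehat{\sigma}}}e)
=\widehat{\Psi}\circ E_2^{\sigma,v}\circ\widehat{\Psi}^{-1}
=E_2^{\rho,u}
=1\rtimes_{\widehat{\widehat{\rho}}}e ,
\]
using $\Psi\circ E_2^{\sigma,v}=E_2^{\rho,u}\circ\widehat{\Psi}$ from Corollary \ref{cor:equal2}. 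With this in hand, your step four goes through: define the auxiliary map $\Psi_2(d_2)={}_{C_2}\la 1_A\cdot d_2,\,1_A\ra$, check $\Psi_2|_{D_1}=\widehat{\Psi}$ and $\Psi_2(1\rtimes_{\widehat{\widehat{\sigma}}}e)=1\rtimes_{\widehat{\widehat{\rho}}}e$ by the quasi-basis expansion, and conclude $\Psi_2=\widehat{\widehat{\Psi}}$ since $D_2$ is spanned by $(d_1\rtimes 1)(1\rtimes e)(d_1'\rtimes 1)$. The same remark applies to your derivation of the expectation intertwining: you should not assume $\widehat{\widehat{\Psi}}(1\rtimes_{\widehat{\widehat{\sigma}}}h)=1\rtimes_{\widehat{\widehat{\rho}}}h$ for general $h$; instead derive $\widehat{\Psi}\circ E_3^{\sigma,v}=E_3^{\rho,u}\circ\widehat{\widehat{\Psi}}$ after the identification $\Psi_2=\widehat{\widehat{\Psi}}$, parallel to the computation of $E_2^{\rho,u}\circ\Psi_1$ preceding Lemma \ref{lem:equal}.
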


By Lemmas \ref{lem:s-iso}, \ref{lem:equal3}, $Y_1 \cong C_{1 \widehat{\Psi}}$ as $C_1 -D_1$-equivalence bimodules
and $Y_2 \cong C_{2 \widehat{\widehat{\Psi}}}$ as $C_2 -D_2$-equivalence bimodules. Also, by \cite [Lemma 5.10]
{KT5:Hopf}, there is a $C^*$-Hopf algebra automorphism $f^0$ of $H^0$ such that
$$
\widehat{\widehat{\rho}}\circ\widehat{\Psi}=(\widehat{\Psi}\otimes f^0 )\circ\widehat{\widehat{\sigma}} .
$$

\begin{lemma}\label{lem:SME}With the above notations, let $\widehat{\widehat{\sigma}}_{f^0}=(\id\otimes f^0 )
\circ\widehat{\widehat{\sigma}}$. Then $\widehat{\widehat{\rho}}$ and $\widehat{\widehat{\sigma}}_{f^0}$ are
strongly Morita equivalent.
\end{lemma}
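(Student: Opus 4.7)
The plan is to exploit the intertwining relation
\[
\widehat{\widehat{\rho}} \circ \widehat{\Psi} = (\widehat{\Psi} \otimes f^0 ) \circ \widehat{\widehat{\sigma}}
\]
supplied by Lemma 5.10 of \cite{KT5:Hopf} in order to equip the $C_1$--$D_1$-equivalence bimodule $Y_1$ with a coaction of $H^0$ compatible with the pair $(\widehat{\widehat{\rho}} , \widehat{\widehat{\sigma}}_{f^0} )$. Concretely, I would first rewrite the intertwining relation in the equivalent form
\[
\widehat{\widehat{\rho}} \circ \widehat{\Psi} = (\widehat{\Psi} \otimes \id_{H^0}) \circ \widehat{\widehat{\sigma}}_{f^0} ,
\]
so that $\widehat{\Psi} : D_1 \to C_1$ becomes a genuine isomorphism intertwining $\widehat{\widehat{\sigma}}_{f^0}$ and $\widehat{\widehat{\rho}}$.

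Next I would invoke the $C_1$--$D_1$-bimodule isomorphism $Y_1 \cong C_{1\,\widehat{\Psi}}$ noted just before the statement of the lemma, so that it suffices to construct the required coaction on $C_{1\,\widehat{\Psi}}$. I would define $\mu : C_{1\,\widehat{\Psi}} \to C_{1\,\widehat{\Psi}} \otimes H^0$ to be $\widehat{\widehat{\rho}}$ itself, viewed as a linear map, where the right $D_1 \otimes H^0$-action on the target $C_{1\,\widehat{\Psi}} \otimes H^0$ is transported through $\widehat{\Psi} \otimes \id_{H^0}$.

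The counit and coassociativity conditions for $\mu$ are then inherited directly from those for $\widehat{\widehat{\rho}}$, and compatibility with the left $C_1$-action and the left $C_1$-valued inner product is immediate since $\widehat{\widehat{\rho}}$ is a $*$-homomorphism. Compatibility with the right $D_1$-action uses the intertwining relation: for $x \in C_{1\,\widehat{\Psi}}$ and $d \in D_1$,
\[
\mu(x \cdot d) = \widehat{\widehat{\rho}}(x\,\widehat{\Psi}(d)) = \widehat{\widehat{\rho}}(x)\,(\widehat{\Psi} \otimes \id_{H^0})(\widehat{\widehat{\sigma}}_{f^0}(d)) = \mu(x) \cdot \widehat{\widehat{\sigma}}_{f^0}(d) .
\]
Compatibility with the right $D_1$-valued inner product follows in the same manner from the dual relation $\widehat{\widehat{\sigma}}_{f^0} \circ \widehat{\Psi}^{-1} = (\widehat{\Psi}^{-1} \otimes \id_{H^0}) \circ \widehat{\widehat{\rho}}$.

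The main obstacle is largely clerical: one must carefully track the identifications $Y_1 \cong C_{1\,\widehat{\Psi}}$ and the twisted right $D_1 \otimes H^0$-action on the target of $\mu$, being consistent about where the twist by $\widehat{\Psi}$ is absorbed and where the factor $f^0$ is absorbed. Once this bookkeeping is settled, the intertwining supplied by Lemma 5.10 of \cite{KT5:Hopf} reduces the remaining verifications to routine computations, yielding the desired strong Morita equivalence between $\widehat{\widehat{\rho}}$ and $\widehat{\widehat{\sigma}}_{f^0}$ via the equivalence bimodule $(Y_1, \mu)$.
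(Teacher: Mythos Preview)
Your proposal is correct and follows essentially the same route as the paper: the paper also works on $C_{1\,\widehat{\Psi}}$, defines the coaction $\lambda_{\widehat{\widehat{\rho}}}(x)=\widehat{\widehat{\rho}}(x)$ (your $\mu$), and verifies the bimodule-coaction axioms using the intertwining relation $\widehat{\widehat{\rho}}\circ\widehat{\Psi}=(\widehat{\Psi}\otimes f^0)\circ\widehat{\widehat{\sigma}}$ exactly as you outline. The only cosmetic difference is that the paper leaves the coaction on $C_{1\,\widehat{\Psi}}$ rather than transporting it back to $Y_1$, but as you note this is immaterial given the isomorphism $Y_1\cong C_{1\,\widehat{\Psi}}$.
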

\begin{proof}Let $\lambda_{\widehat{\widehat{\rho}}}$ be the linear map from $C_{1 \widehat{\Psi}}$ to
$C_{1 \widehat{\Psi}}\otimes H^0$ defined by
$$
\lambda_{\widehat{\widehat{\rho}}}(x)=\widehat{\widehat{\rho}}(x)
$$
for any $x\in C_{1 \widehat{\Psi}}$. Then $\lambda_{\widehat{\widehat{\rho}}}$ is a coaction of $H^0$ on
$C_{1 \widehat{\Psi}}$ with respect to $(C_1 , D_1 , \widehat{\widehat{\rho}}, \widehat{\widehat{\sigma}}_{f^0})$.
Indeed, for any $c\in C_1$, $d\in D_1$, $x\in C_{1 \widehat{\Psi}}$,
\begin{align*}
\lambda_{\widehat{\widehat{\rho}}}(c\cdot x) & =\lambda_{\widehat{\widehat{\rho}}}(cx)=\widehat{\widehat{\rho}}(c)
\widehat{\widehat{\rho}}(x)=\widehat{\widehat{\rho}}(c)\cdot\lambda_{\widehat{\widehat{\rho}}}(x) , \\
\lambda_{\widehat{\widehat{\rho}}}(x\cdot d) & =\lambda_{\widehat{\widehat{\rho}}}(x\widehat{\Psi}(d))
=\widehat{\widehat{\rho}}(x)\widehat{\widehat{\rho}}(\widehat{\Psi}(d))=\widehat{\widehat{\rho}}(x)
\widehat{\Psi}(\widehat{\widehat{\sigma}}_{f^0}(d))=\lambda_{\widehat{\widehat{\rho}}}(x)\cdot
\widehat{\widehat{\sigma}}_{f^0}(d) .
\end{align*}
Also, for any $x, y\in C_{1 \widehat{\Psi}}$,
\begin{align*}
{}_{C_1 \otimes H^0} \la \lambda_{\widehat{\widehat{\rho}}}(x) \, , \, \lambda_{\widehat{\widehat{\rho}}}(y) \ra & =
\widehat{\widehat{\rho}}(x)\widehat{\widehat{\rho}}(y)^* =\widehat{\widehat{\rho}}(xy^* )=\widehat{\widehat{\rho}}
({}_{C_1} \la x, y \ra) , \\
\la \lambda_{\widehat{\widehat{\rho}}}(x) \, , \, \lambda_{\widehat{\widehat{\rho}}}(y) \ra_{D_1 \otimes H^0} & =
(\widehat{\Psi}^{-1}\otimes\id_{H^0})(\widehat{\widehat{\rho}}(x^* y))
=\widehat{\widehat{\sigma}}_{f^0}(\widehat{\Psi}^{-1}(x^* y))
=\widehat{\widehat{\sigma}}_{f^0}(\la x, y \ra_{D_1}) .
\end{align*}
Furthermore,
\begin{align*}
(\lambda_{\widehat{\widehat{\rho}}}\otimes\id)\circ\lambda_{\widehat{\widehat{\rho}}} & =(\widehat{\widehat{\rho}}\otimes\id)
\circ\widehat{\widehat{\rho}}=(\id\otimes\Delta^0 )\circ\widehat{\widehat{\rho}}=(\id\otimes\Delta^0 )\circ
\lambda_{\widehat{\widehat{\rho}}} , \\
(\id\otimes\epsilon^0 )\circ\lambda_{\widehat{\widehat{\rho}}} & =(id\otimes\epsilon^0 )\circ\widehat{\widehat{\rho}}=\id .
\end{align*}
Hence we obtain the conclusion.
\end{proof}

\begin{lemma}\label{lem:s-iso2}With the above notations, $C_{2 \widehat{\widehat{\Psi}}}\cong C_{1 \widehat{\Psi}}
\rtimes_{\lambda_{\widehat{\widehat{\rho}}}}H$ as $C_2 - D_2$-equivalence bimodules.
\end{lemma}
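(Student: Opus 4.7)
The plan is to construct an explicit isomorphism $\eta \colon C_{1 \widehat{\Psi}} \rtimes_{\lambda_{\widehat{\widehat{\rho}}}} H \to C_{2 \widehat{\widehat{\Psi}}}$ defined on generators by
\[
\eta(x \rtimes_{\lambda_{\widehat{\widehat{\rho}}}} h) = x \rtimes_{\widehat{\widehat{\rho}}} h,
\]
for $x \in C_1$ (viewed as an element of $C_{1 \widehat{\Psi}}$) and $h \in H$, with target inside $C_2 = C_1 \rtimes_{\widehat{\widehat{\rho}}} H$ (viewed as $C_{2 \widehat{\widehat{\Psi}}}$). As a preliminary step, I would invoke \cite[Lemma 6.1]{KT5:Hopf} to obtain the isomorphism $\pi_D \colon D_2 = D_1 \rtimes_{\widehat{\widehat{\sigma}}} H \to D_1 \rtimes_{\widehat{\widehat{\sigma}}_{f^0}} H$ given by $d \rtimes_{\widehat{\widehat{\sigma}}} h \mapsto d \rtimes_{\widehat{\widehat{\sigma}}_{f^0}} f(h)$, which allows the right action of $D_1 \rtimes_{\widehat{\widehat{\sigma}}_{f^0}} H$ furnished by the crossed product $C_{1 \widehat{\Psi}} \rtimes_{\lambda_{\widehat{\widehat{\rho}}}} H$ to be transferred to a right $D_2$-action. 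That $\eta$ is a vector space isomorphism is then immediate, since both sides are linearly spanned by symbols $x \rtimes h$ with $x \in C_1$, $h \in H$.

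Next I would verify the left $C_2$-structure. Since $\lambda_{\widehat{\widehat{\rho}}}$ is literally $\widehat{\widehat{\rho}}$ as a linear map on $C_1$, and since the left $C_1$-action on $C_{1 \widehat{\Psi}}$ is just multiplication in $C_1$, the standard formulas for the left $C_1 \rtimes_{\widehat{\widehat{\rho}}} H$-action and the left $C_1 \rtimes_{\widehat{\widehat{\rho}}} H$-valued inner product on $C_{1 \widehat{\Psi}} \rtimes_{\lambda_{\widehat{\widehat{\rho}}}} H$ reduce to ordinary multiplication in $C_2$. This matches the left $C_2$-action and the left $C_2$-valued inner product $\langle y, z \rangle_{C_2} = y z^{*}$ on $C_{2 \widehat{\widehat{\Psi}}}$ under $\eta$.

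The substantive part is checking the right $D_2$-structure. Using the intertwining relation $\widehat{\widehat{\rho}} \circ \widehat{\Psi} = (\widehat{\Psi} \otimes f^0) \circ \widehat{\widehat{\sigma}}$ established immediately before Lemma \ref{lem:SME}, together with Lemma \ref{lem:equal3}, one expects that the extension $\widehat{\widehat{\Psi}}$ satisfies $\widehat{\widehat{\Psi}}(d \rtimes_{\widehat{\widehat{\sigma}}} h) = \widehat{\Psi}(d) \rtimes_{\widehat{\widehat{\rho}}} f(h)$. Under this description, multiplication by $\widehat{\widehat{\Psi}}(d \rtimes_{\widehat{\widehat{\sigma}}} h)$ on the right of $x \rtimes_{\widehat{\widehat{\rho}}} l \in C_{2 \widehat{\widehat{\Psi}}}$ expands, via the product in the crossed product algebra $C_2$, into exactly the expression produced by the bimodule crossed product right action of $d \rtimes_{\widehat{\widehat{\sigma}}_{f^0}} f(h)$ on $x \rtimes_{\lambda_{\widehat{\widehat{\rho}}}} l$ (with $D_1$ acting on $C_{1 \widehat{\Psi}}$ as multiplication by $\widehat{\Psi}(d)$). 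An analogous calculation, using $\widehat{\widehat{\Psi}}^{-1}$ in place of $\widehat{\widehat{\Psi}}$, handles the right $D_2$-valued inner product.

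The main obstacle is the careful bookkeeping in this last step: one must match the Sweedler-type formulas governing the right action of $D_1 \rtimes_{\widehat{\widehat{\sigma}}_{f^0}} H$ on the bimodule crossed product, which involve the coaction $\widehat{\widehat{\sigma}}_{f^0}$ twisted by $f^0$, with the algebra multiplication in $C_2$ combined with the $f$-twist appearing in $\widehat{\widehat{\Psi}}$ on the factor $H \subset D_2$. Once the formula $\widehat{\widehat{\Psi}}(d \rtimes_{\widehat{\widehat{\sigma}}} h) = \widehat{\Psi}(d) \rtimes_{\widehat{\widehat{\rho}}} f(h)$ is explicitly pinned down from the abstract characterization in Lemma \ref{lem:equal3}, the verification becomes a routine but lengthy computation entirely parallel to the proof of Lemma \ref{lem:s-iso}.
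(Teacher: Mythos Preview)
Your proposal is correct and takes essentially the same approach as the paper: the paper defines the map $\Theta$ in the opposite direction, $\Theta((x\rtimes_{\widehat{\widehat{\rho}}}h)_{\widehat{\widehat{\Psi}}})=x\rtimes_{\lambda_{\widehat{\widehat{\rho}}}}h$, and then simply asserts that ``by easy computations, $\Theta$ is a $C_2$--$D_2$-equivalence bimodule isomorphism.'' Your $\eta$ is the inverse of this $\Theta$, and your outline of the left and right structure checks (including the use of the intertwining relation for $\widehat{\widehat{\Psi}}$) is exactly the content hidden behind the paper's phrase ``easy computations.''
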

\begin{proof}We note that $C_{2 \widehat{\widehat{\Psi}}}=(C_1\rtimes_{\widehat{\widehat{\rho}}}
H)_{\widehat{\widehat{\Psi}}}$. For any $x\in C_1$, $h\in H$, $(x\rtimes_{\widehat{\widehat{\rho}}}
h)_{\widehat{\widehat{\Psi}}}$ denotes the element in $C_{2 \widehat{\widehat{\Psi}}}$ induced by $x\in C_1$,
$h\in H$. Let $\Theta$ be the bijective map from $C_{2 \widehat{\widehat{\Psi}}}$ onto
$C_{1 \widehat{\Psi}}\rtimes_{\lambda_{\widehat{\widehat{\rho}}}}H$ defined by
$$
\Theta((x\rtimes_{\widehat{\widehat{\rho}}}h)_{\widehat{\widehat{\Psi}}})=x\rtimes_{\lambda_{\widehat{\widehat{\rho}}}}h
$$
for any $x\in C_1$, $h\in H$. By easy computations, $\Theta$ is a $C_2 -D_2$-equivalence bimodule isomorphism of
$C_{2 \widehat{\widehat{\Psi}}}$ onto $C_{1 \widehat{\Psi}}\rtimes_{\lambda_{\widehat{\widehat{\rho}}}}H$.
Therefore, we obtain the conclusion.
\end{proof}

Since $C_{2 \widehat{\widehat{\Psi}}}\cong Y_2$ as $C_2 -D_2$-equivalence bimodules and
$C_{1 \widehat{\Psi}}\cong Y_1$ as $C_1 -D_1$-equivalence bimodules, by the proofs of
Lemmas \ref{lem:SME}, \ref{lem:s-iso2}, there is a coaction $\lambda$ of $H^0$ on $Y_1$
with respect to $(C_1 , D_1 , \widehat{\widehat{\rho}},  \, \widehat{\widehat{\sigma}}_{f^0})$ such that
$$
Y_2 \cong Y_1\rtimes_{\lambda}H
$$
as $C_2 -D_2$-equivalence bimodules. Therefore, we obtain the following proposition:

\begin{prop}\label{prop:s-iso3}Let $H$ be a finite dimensional $C^*$-Hopf algebra
with its dual $C^*$-Hopf algebra $H^0$. Let $(\rho, u)$ and $(\sigma, v)$ be twisted coactions of $H^0$ on
a unital $C^*$-algebra $A$. Let $C=A\rtimes_{\rho, u}H$, $D=A\rtimes_{\sigma, v}H$. We suppose that $A' \cap C=\BC1$.
(Hence $A' \cap D=\BC 1$.) Also, we suppose that the unital inclusions $A\subset C$ and $A\subset D$
are strongly Morita equivalent with respect to a $C-D$-equivalence bimodule $Y$ and its closed subspace $A$.
Let $C_1$, $C_2$, $D_1$, $D_2$, $\widehat{\widehat{\rho}}$, $\widehat{\widehat{\sigma}}$,
$\widehat{\widehat{\sigma_{f^0}}}$
and $Y_1$, $Y_2$ be as above.
Then there are a $C^*$-Hopf algebra automorphism $f^0$ of $H^0$ and a coaction $\lambda$ of
$H^0$ on $Y_1$ with respect to $(C_1, D_1 , \widehat{\widehat{\rho}}, \, \widehat{\widehat{\sigma_{f^0}}})$ such that
$$
Y_2 \cong Y_1 \rtimes_{\lambda}H
$$
as $C_2 -D_2$-equivalence bimodules.
\end{prop}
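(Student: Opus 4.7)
The plan is to assemble the conclusion directly from the discussion preceding the statement, which has already done all the structural work; the only remaining task is to transport the coaction built on $C_{1\widehat{\Psi}}$ over to $Y_1$ via the concrete bimodule isomorphism supplied by Lemma \ref{lem:s-iso}.

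First, I would apply Lemma \ref{lem:s-iso} at two successive levels of the basic construction. At the first level, the strongly Morita equivalent unital inclusions $A\subset C$ and $A\subset D$ together with the isomorphism $\widehat{\Psi}:D_1\to C_1$ of Corollary \ref{cor:equal2} yield a $C_1$-$D_1$-equivalence bimodule isomorphism $\eta_1: Y_1\to C_{1\widehat{\Psi}}$. At the second level, the unital inclusions $C\subset C_1$ and $D\subset D_1$ (with $A'\cap C = \BC 1$ propagating to $C'\cap C_1=\BC 1$) together with $\widehat{\widehat{\Psi}}:D_2\to C_2$ from Lemma \ref{lem:equal3} yield a $C_2$-$D_2$-equivalence bimodule isomorphism $\eta_2: Y_2\to C_{2\widehat{\widehat{\Psi}}}$.

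Next, I would invoke Lemma \ref{lem:SME} to obtain the $C^*$-Hopf algebra automorphism $f^0$ of $H^0$ and the coaction $\lambda_{\widehat{\widehat{\rho}}}$ of $H^0$ on $C_{1\widehat{\Psi}}$ with respect to $(C_1,D_1,\widehat{\widehat{\rho}},\widehat{\widehat{\sigma}}_{f^0})$. Pulling this back through $\eta_1$, I would define
\[
\lambda = (\eta_1^{-1}\otimes\id_{H^0})\circ\lambda_{\widehat{\widehat{\rho}}}\circ\eta_1,
\]
and verify, using that $\eta_1$ is a $C_1$-$D_1$-bimodule isomorphism preserving both inner products, that $\lambda$ is a coaction of $H^0$ on $Y_1$ with respect to $(C_1,D_1,\widehat{\widehat{\rho}},\widehat{\widehat{\sigma}}_{f^0})$. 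By Lemma \ref{lem:s-iso2}, $C_{2\widehat{\widehat{\Psi}}}\cong C_{1\widehat{\Psi}}\rtimes_{\lambda_{\widehat{\widehat{\rho}}}}H$ as $C_2$-$D_2$-equivalence bimodules via the map $\Theta$ defined there, and the canonical crossed product isomorphism $Y_1\rtimes_\lambda H\to C_{1\widehat{\Psi}}\rtimes_{\lambda_{\widehat{\widehat{\rho}}}}H$ induced by $\eta_1$ is clearly a $C_2$-$D_2$-equivalence bimodule map. Composing these three bimodule isomorphisms with $\eta_2^{-1}$ gives the desired $Y_2\cong Y_1\rtimes_\lambda H$.

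The main obstacle I anticipate is checking that the two applications of Lemma \ref{lem:s-iso} are coherent, i.e.\ that the second-level isomorphism $\eta_2$ agrees with the composition $Y_2\to Y_1\rtimes_\lambda H\to C_{1\widehat{\Psi}}\rtimes_{\lambda_{\widehat{\widehat{\rho}}}}H\to C_{2\widehat{\widehat{\Psi}}}$. This is really a compatibility between the upward basic construction of $Y$ (which produces $Y_2$ from $Y_1$) and the crossed product by the dual coaction (which produces $C_{2\widehat{\widehat{\Psi}}}$ from $C_{1\widehat{\Psi}}$), and it should follow from the identification used in Lemma \ref{lem:s-iso}, namely $y = {}_C\la y,1_A\ra\cdot 1_A$, applied at the level of $Y_2$ together with the defining property $\widehat{\widehat{\Psi}}(1\rtimes_{\widehat{\widehat{\sigma}}}e)=1\rtimes_{\widehat{\widehat{\rho}}}e$ from Lemma \ref{lem:equal3}. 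Once this coherence is in place, everything else is a routine diagram chase and no further computation is required beyond what has already been carried out in Lemmas \ref{lem:s-iso}--\ref{lem:s-iso2}.
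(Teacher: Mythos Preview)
Your proposal is correct and follows essentially the same route as the paper: the paper's argument, recorded in the paragraph immediately preceding the proposition, simply transports the coaction $\lambda_{\widehat{\widehat{\rho}}}$ from $C_{1\widehat{\Psi}}$ to $Y_1$ via $Y_1\cong C_{1\widehat{\Psi}}$ (Lemma~\ref{lem:s-iso} together with Corollary~\ref{cor:equal2}) and then reads off $Y_2\cong C_{2\widehat{\widehat{\Psi}}}\cong C_{1\widehat{\Psi}}\rtimes_{\lambda_{\widehat{\widehat{\rho}}}}H\cong Y_1\rtimes_\lambda H$ from Lemma~\ref{lem:s-iso2} and the second-level application of Lemma~\ref{lem:s-iso} via Lemma~\ref{lem:equal3}.

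One remark: the ``main obstacle'' you anticipate is not actually an obstacle. The proposition only asserts the \emph{existence} of a $C_2$--$D_2$-equivalence bimodule isomorphism $Y_2\cong Y_1\rtimes_\lambda H$; it does not require this isomorphism to restrict to, or be compatible with, any given map at the lower level. Hence the chain $Y_2\xrightarrow{\eta_2} C_{2\widehat{\widehat{\Psi}}}\xrightarrow{\Theta} C_{1\widehat{\Psi}}\rtimes_{\lambda_{\widehat{\widehat{\rho}}}}H \to Y_1\rtimes_\lambda H$ already gives the conclusion, with no coherence check needed.
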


\section{The generalized Picard groups for coactions and the Picard groups for inclusions
of unital $C^*$-algebras}\label{sec:surjection}
In this section, we shall investigate the relation between the generalized Picard groups for coactions
of a finite dimensional $C^*$-Hopf algebra on a unital $C^*$-algebra and the Picard groups for unital
inclusions of unital $C^*$-algebras induced by the coaction.
\par
Let $H$ be a finite dimensional $C^*$-Hopf algebra
with its dual $C^*$-Hopf algebra $H^0$. Let $(\rho, u)$ be a twisted
coaction of $H^0$ on a unital
$C^*$-algebra $A$. Then we have the unital inclusion of unital $C^*$-algebras, $A\subset A\rtimes_{\rho, u}H$.
Hence we can obtain the generalized Picard group $\GPic_H^{\rho, u}(A)$ for the twisted coaction $(\rho, u)$ and the
Picard group $\Pic(A, A\rtimes_{\rho, u}H)$ for the unital inclusion of unital $C^*$-algebras $A\subset A\rtimes_{\rho, u}H$.
\par
Let $(X, \lambda, f^0 )\in\GEqui_H^{\rho, u}(A)$. Then we obtain the element
$(X, X\rtimes_{\lambda}H)\in\Equi(A, A\rtimes_{\rho, u}H)$. Indeed, $X$ is an $A-A$-equivalence
and $X\rtimes_{\lambda}H$ is an $A\rtimes_{\rho, u}H-A\rtimes_{\rho, u}H$-equivalence bimodule.
Also, $A\rtimes_{\rho_{f^0}, u_{f^0}}H$ is isomorphic to $A\rtimes_{\rho, u}H$ by the isomorphism
$\pi_{f^0}$ of $A\rtimes_{\rho, u}H$ onto $A\rtimes_{\rho_{f^0}, u_{f^0}}H$ defined by
$$
\pi_{f^0}(a\rtimes_{\rho, u}h)=a\rtimes_{\rho_{f^0}, u_{f^0}}f(h)
$$
for any $a\in A$, $h\in H$, where $f$ is the $C^*$-Hopf algebra automorphism of $H$ induced by $f^0$. Furthermore,
by routine computations, we can see that $X$ and $X\rtimes_{\lambda}H$ satisfy Conditions (1), (2)
in \cite [Definition 2.1]{KT4:morita}. Thus we can define the map $\theta$ from $\GPic_H^{\rho, u}(A)$ to
$\Pic(A, A\rtimes_{\rho, u}H)$ by
$$
\theta([X, \lambda, f^0 ])=[X, X\rtimes_{\lambda}H ]
$$
for any $[X, \lambda, f^0 ]\in\GPic_H^{\rho, u}(A)$. We note that $\theta$ is well-defined by routine computations.
We show that $\theta$ is a homomorphism of $\GPic_H^{\rho, u}(A)$ to $\Pic(A, A\rtimes_{\rho, u}H)$.
Let $[X, \lambda, f^0 ]$, $[Y, \mu, g^0 ]\in\GPic_H^{\rho, u}(A)$. Then
\begin{align*}
\theta([X, \lambda, f^0 ][Y, \mu, g^0 ]) & =\theta([X\otimes_A Y \, , \, \lambda\otimes\mu_{f^0} \, , \, 
f^0 \circ g^0 ]) \\
& =[X\otimes_A Y \, , \, (X\otimes_A Y)\rtimes_{\lambda\otimes\mu_{f^0}}H] .
\end{align*}
On the other hand,
\begin{align*}
\theta([X, \lambda, f^0 ])\theta([Y, \mu, g^0 ]) & =[X, X\rtimes_{\lambda}H][Y, Y\rtimes_{\mu}H] \\
& =[X\otimes_A Y \, , \, (X\rtimes_{\lambda}H)\otimes_{A\rtimes_{\rho, u}H}(Y\rtimes_{\mu}H)] \\
& =[(X\rtimes_{\lambda}1)\otimes_A (Y\rtimes_{\mu}1) \,, \, (X\rtimes_{\lambda}H)\otimes_{A\rtimes_{\rho, u}H}
(Y\rtimes_{\mu}H)] .
\end{align*}
We note that $A\rtimes_{\rho_{f^0}, u_{f^0}}H$, $A\rtimes_{\rho_{g^0}, u_{g^0}}H$ and
$A\rtimes_{\rho_{f^0 \circ g^0}, u_{f^0 \circ g^0}}H$ are identified with $A\rtimes_{\rho, u}H$ by the isomorphisms
$\pi_{f^0}$, $\pi_{g^0}$ and $\pi_{f^0 \circ g^0}$ defined respectively as follows:
\begin{align*}
\pi_{f^0}(a\rtimes_{\rho, u}h) & =a\rtimes_{\rho_{f^0}, u_{f^0}}f(h) \\
\pi_{g^0}(a\rtimes_{\rho, u}h) & =a\rtimes_{\rho_{g^0}, u^{g^0}}g(h) \\
\pi_{f^0 \circ g^0 }(a\rtimes_{\rho, u}h) & =a\rtimes_{\rho_{f^0 \circ g^0}, u_{f^0 \circ g^0 }}(f\circ g)(h)
\end{align*}
for any $a\in A$, $h\in H$. Let $\Phi$ be the linear map from $(X\otimes_A Y)\rtimes_{\lambda\otimes\mu_{f^0}}H$
to $(X\rtimes_{\lambda}H)\otimes_{A\rtimes_{\rho, u}H}(Y\rtimes_{\mu}H)$ defined by
$$
\Phi((x\otimes y)\rtimes_{\lambda\otimes\mu_{f^0}}h)=(x\rtimes_{\lambda}1)\otimes(y\rtimes_{\mu}f^{-1}(h))
$$
for any $x\in X$, $y\in Y$, $h\in H$. For any $x\in X$, $y\in Y$, $h, l\in H$,
\begin{align*}
(x\rtimes_{\lambda}h)\otimes(y\rtimes_{\mu}l) & =(x\rtimes_{\lambda}1)\cdot (1\rtimes_{\rho_{f^0}, u_{f^0}}h)
\otimes(y\rtimes_{\mu}l) \\
& =(x\rtimes_{\lambda}1)\otimes(1\rtimes_{\rho, u}f^{-1}(h))\cdot(y\rtimes_{\mu}l) \\
& =(x\rtimes_{\lambda}1)\otimes([f^{-1}(h_{(1)})\cdot_{\mu}y]\widehat{u_{g^0}}(f^{-1}(h_{(2)}), l_{(1)})\rtimes_{\mu}f^{-1}
(h_{(3)})l_{(2)}) .
\end{align*}
Hence $(X\rtimes_{\lambda}H)\otimes_{A\rtimes_{\rho, u}H}(Y\rtimes_{\mu}H)$ is the closure of linear spans
of elements $(x\rtimes_{\lambda}1)\otimes(y\rtimes_{\mu}h)$ in $(X\rtimes_{\lambda}H)\otimes_{A\rtimes_{\rho, u}H}
(Y\rtimes_{\mu}H)$, where $x\in X$, $y\in Y$, $h\in H$. Thus $\Phi$ is surjective. Also, its inverse map
$\Phi^{-1}$ is following:
$$
\Phi^{-1}((x\rtimes_{\lambda}1)\otimes(y\rtimes_{\mu}h))=(x\rtimes y)\rtimes_{\rho\otimes\mu_{f^0}}f(h)
$$
for any $x\in X$, $y\in Y$, $h\in H$. Furthermore, let $x, z\in X$, $y, w\in Y$, $h, l\in H$. Then
\begin{align*}
& {}_{A\rtimes_{\rho, u}H} \la (x\otimes y)\rtimes_{\lambda\otimes\mu_{f^0}}h \, , \,
(z\otimes w)\rtimes_{\lambda\otimes\mu_{f^0}}l \ra \\
& ={}_A \la x\otimes y \, , \, [S(h_{(2)}l_{(3)}^* )^* \cdot_{\lambda\otimes\mu_{f^0}}(z\otimes w)]
\cdot\widehat{u_{f^0 \circ g^0}}(S(h_{(1)}l_{(2)}^* )^* \, , \, l_{(1)}) \rtimes_{\rho, u}h_{(3)}l_{(4)} \\
& ={}_A \la x\otimes y \, , \, [S(h_{(3)}l_{(4)}^* )^* \cdot_{\lambda}z]\otimes[S(h_{(2)}l_{(3)}^* )^* \cdot_{\mu_{f^0}}w]
\cdot\widehat{u_{f^0 \circ g^0}}(S(h_{(1)}l_{(2)}^* )^* \, , \, l_{(1)}) \\
& \rtimes_{\rho, u}h_{(4)}l_{(5)}^* \\
& ={}_A \la x\cdot {}_A \la y \, , \, [S(h_{(2)}l_{(3)}^* )^* \cdot_{\mu_{f^0}}w]\cdot
\widehat{u_{f^0 \circ g^0}}(S(h_{(1)}l_{(2)}^* )^* , l_{(1)}) \ra \, , \, [S(h_{(3)}l_{(4)}^* )^* \cdot_{\lambda}z] \ra \\
& \rtimes_{\rho, u}h_{(4)}l_{(5)}^* .
\end{align*}
On the other hand,
\begin{align*}
& {}_{A\rtimes_{\rho, u}H} \la \Phi((x\otimes y)\rtimes_{\lambda\otimes\mu_{f^0}}h) \, , \, \
\Phi((z\otimes w)\rtimes_{\lambda\otimes\mu_{f^0}}l )\ra \\
& ={}_{A\rtimes_{\rho, u}H} \la (x\rtimes_{\lambda}1)\otimes(y\rtimes_{\mu}f^{-1}(h)) \, , \,
(z\rtimes_{\lambda}1)\otimes(w\rtimes_{\mu}f^{-1}(l)) \ra \\
& ={}_{A\rtimes_{\rho, u}H} \la (x\rtimes_{\lambda}1)\cdot {}_{A\rtimes_{\rho, u}H} \la y\rtimes_{\mu}f^{-1}(h) \, , \, 
w\rtimes_{\mu}f^{-1}(l) \ra \, , \, z\rtimes_{\lambda}1 \ra \\
& ={}_{A\rtimes_{\rho, u}H}(x\rtimes_{\lambda}1)\cdot {}_A \la y \, , \, [f^{-1}(S(h_{(2)}l_{(3)}^* )^* )\cdot_{\mu}w]\cdot
\widehat{u_{g^0}}(f^{-1}(S(h_{(1)}l_{(2)}^* )^* ) \, , \, f^{-1}(l_{(1)})) \ra \\
& \rtimes_{\rho, u}f^{-1}(h_{(3)}l_{(4)}^* ) \, , \, z\rtimes_{\lambda}1 \ra \\
&  ={}_{A\rtimes_{\rho, u}H} \la (x\rtimes_{\lambda}1)\cdot {}_A \la y \, , \, [S(h_{(2)}l_{(3)}^* )^* \cdot_{\mu_{f^0}}w]\cdot
\widehat{u_{f^0 \circ g^0}}(S(h_{(1)}l_{(2)}^* )^* ) \, , \, l_{(1)} \ra \\
& \rtimes_{\rho, u}f^{-1}(h_{(3)}l_{(4)}^* ) \, , \, z\rtimes_{\lambda}1 \ra \\
& ={}_{A\rtimes_{\rho, u}H} \la x\, \cdot \, {}_A \la y \, , \, [S(h_{(2)}l_{(3)}^* )^* \cdot_{\mu_{f^0}}w]
\cdot{\widehat{u_{f^0 \circ g^0}}}
(Sh_{(1)}l_{(2)}^* )^* \, , \, l_{(1)}) \ra \\
& \rtimes_{\lambda}h_{(3)}l_{(4)}^* \, , \, z\rtimes_{\lambda}1 \ra \\
& ={}_A \la x\cdot {}_A \la y \, , \, [S(h_{(2)}l_{(3)}^* )^* \cdot_{\mu_{f^0}}w]\cdot\widehat{u_{f^0 \circ g^0}}
(S(h_{(1)}l_{(2)}^* )^* \, , \, l_{(1)}) \ra \, , \, [S(h_{(3)}l_{(4)}^* )^* \cdot_{\lambda}z] \ra \\
& \rtimes_{\rho, u}h_{(4)}l_{(5)}^* .
\end{align*}
Hence $\Phi$ preserves the left $A\rtimes_{\rho, u}H$-inner products. Also,
\begin{align*}
& \la (x\otimes y)\rtimes_{\lambda\otimes\mu_{f^0}}h \, , \, (z\otimes w)\rtimes_{\lambda\otimes\mu_{f^0}}l
\ra_{A\rtimes_{\rho, u}H} \\
& =\widehat{u_{f^0 \circ g^0}^*}(h_{(2)}^* \, , \, S(h_{(1)})^* )[h_{(3)}^* \cdot_{\rho_{f^0 \circ g^0}, u_{f^0 \circ g^0}}
\la x\otimes y \, ,\, z\otimes w \ra_A ]\widehat{u_{f^0 \circ g^0}}(h_{(4)}^* \, , \, l_{(1)}) \\
& \rtimes_{\rho, u}(g^{-1}\circ f^{-1})
(h_{(5)}^* l_{(2)}) \\
& =\widehat{u_{f^0 \circ g^0}^*}(h_{(2)}^* \, , \, S(h_{(1)}^* ))[h_{(3)}^* \cdot_{\rho_{f^0 \circ g^0}, u_{f^0 \circ g^0}}
\la y \, , \, \la x, z \ra_A \cdot w \ra_A ]\widehat{u_{f^0 \circ g^0}}(h_{(4)}^* \, , \, l_{(1)}) \\
& \rtimes_{\rho, u}(g^{-1}\circ f^{-1})(h_{(5)}^* l_{(2)}) .
\end{align*}
On the other hand, 
\begin{align*}
& \la \Phi((x\otimes y)\rtimes_{\lambda\otimes\mu_{f^0}}h) \, , \, \Phi((z\otimes w)\rtimes_{\lambda\otimes\mu_{f^0}}l)
\ra_{A\rtimes_{\rho, u}H} \\
& \la (x\rtimes_{\lambda} 1)\otimes (y\rtimes_{\mu}f^{-1}(h)) \, , \, (z\rtimes_{\lambda}1)\otimes(w\rtimes_{\mu}f^{-1}(l))
\ra_{A\rtimes_{\rho, u}H} \\
& =\la y\rtimes_{\mu}f^{-1}(h) \, , \, \la x\rtimes_{\lambda}1 \, , \, z\rtimes_{\lambda}1 \ra_{A\rtimes_{\rho, u}H}
\cdot w\rtimes_{\mu}f^{-1}(l) \ra_{A\rtimes_{\rho, u}H} \\
& =\la y\rtimes_{\mu}f^{-1}(h) \, , \, \la x, z \ra_A \rtimes_{\rho, u}1\cdot w\rtimes_{\mu}f^{-1}(l) \ra_{A\rtimes_{\rho, u}H} \\
& =\la y\rtimes_{\mu}f^{-1}(h) \, , \, \la x, y \ra_A \cdot w \rtimes_{\mu}f^{-1}(l) \ra_{A\rtimes_{\rho, u}H} \\
& =\widehat{u_{g^0}^*}(f^{-1}(h_{(2)}^* ) \, , \, f^{-1}(S(h_{(1)}^* )))[f^{-1}(h_{(3)}^* )\cdot_{\rho_{g^0}, \, u_{g^0}}
\la y\, , \, \la x, z \ra_A \cdot w \ra_A ] \\
& \times\widehat{u_{g^0}}(f^{-1}(h_{(4)}^* ) \, , \, f^{-1}(l_{(1)}))\rtimes_{\rho, u}g^{-1}(f^{-1}(h_{(5)}^* l_{(2)})) \\
& =\widehat{u_{f^0 \circ g^0}^*}(h_{(2)}^* \, , \, S(h_{(1)}^* ))[h_{(3)}^* \cdot_{\rho_{f^0 \circ g^0}, u_{f^0 \circ g^0}}
\la y \, , \, \la x, z \ra_A \cdot w \ra_A ]\widehat{u_{f^0 \circ g^0}}(h_{(4)}^* \, , \, l_{(1)}) \\
& \rtimes_{\rho, u}(g^{-1}\circ f^{-1})(h_{(5)}^* l_{(2)}) .
\end{align*}
Hence $\Phi$ preserves the right $A\rtimes_{\rho, u}H$-inner products. Therefore, by the remark after Jensen Thomsen
\cite [Definition 1.1.18]{JT:KK}, $\Phi$ is  an $A\rtimes_{\rho, u}H-A\rtimes_{\rho, u}H$-equivalence bimodule
isomorphism of $(X\otimes_A Y)\rtimes_{\lambda\otimes\mu_{f^0}}H$ onto
$(X\rtimes_{\lambda}H)\otimes_{A\rtimes_{\rho, u}H}(Y\rtimes_{\mu}H)$. Furthermore,
$$
\Phi((X\otimes_A Y)\rtimes_{\lambda\otimes\mu_{f^0}}1)=
(X\rtimes_{\lambda}1)\otimes_{A\rtimes_{\rho, u}H}(Y\rtimes_{\mu}1)
$$
by the definition of $\Phi$. Therefore $\theta$ is a homomorphism of $\GPic_H^{\rho, u}(A)$ to
$\Pic(A, A\rtimes_{\rho, u}H)$.
\par
We shall show that $\theta$ is surjective if $A' \cap (A\rtimes_{\rho, u}H)=\BC1$.
Let $C=A\rtimes_{\rho, u}H$ and we suppose that $A' \cap C=\BC 1$. Let $[X, Y]\in\Pic(A, C)$. Then
$[Y, Y_1 ]\in\Pic(C, C_1 )$, where $C_1 =C\rtimes_{\widehat{\rho}}H^0$ and $Y_1$ is the upward
basic construction of $Y$ for $E^X$, the unique conditional
expectation from $Y$ onto $X$ with respect to $E_1^{\rho, u}$ and $E_1^{\rho, u}$ (See \cite [Definition 6.1]{KT4:morita}).
Also, by \cite [Section 4]{KT5:Hopf},
there are a coaction $\beta$ of $H$ on $C$ and a coaction $\mu$ of $H$ on $Y$ such that $\widehat{\rho}$ and $\beta$
are strongly Morita equivalent with respect to the coaction $\mu$, that is, $\mu$ is a coaction of $H$ on $Y$
with respect to $(C, C, \widehat{\rho}, \beta)$. Hence the unital inclusions $C\subset C_1$ and $C\subset D_1$ are
strongly Morita equivalent to with respect to the $C_1 -D_1$-equivalence bimodule $Y\rtimes_{\mu}H^0$
and its closed subspace $Y$, where $D_1 =C\rtimes_{\beta}H^0$. Since $[Y, Y_1 ]\in \Pic(C, C_1 )$,
the unital inclusions $C\subset C_1$ and $C \subset D_1$ are strongly Morita equivalent with respect to the
$C_1 -D_1$-equivalence bimodule $\widetilde{Y_1}\otimes_{C_1}(Y\rtimes_{\mu}H^0 )$ and its closed
subspace $\widetilde{Y}\otimes_C Y \cong {}_C C_C$. We note that $C' \cap C_1 =\BC 1$ since $A' \cap C=\BC 1$.
Let $Z=\widetilde{Y_1}\otimes_{C_1}(Y\rtimes_{\mu}H^0 )$.
Then there is the unique conditional expectation $F$ from $Z$ onto ${}_C C_C$ with respect to $E_2^{\rho, u}$ and
$E_2^{\rho, u}$ by \cite [Lemma 3.1]{Kodaka:Picard} since $A' \cap C=\BC 1$. By Proposition
\ref {prop:s-iso3}, there are a $C^*$-Hopf algebra automorphism $f$ of $H$ and a coaction $\lambda$ of $H$ on $Z_1$ with
respect to $(C_2 , D_2 , \widehat{\widehat{\widehat{\rho}}}, \widehat{\widehat{\beta_f}})$ such that 
$$
Z_2 \cong Z_1 \rtimes_{\lambda}H^0
$$
as $C_3 -D_3$-equivalence bimodules, where $Z_1$ is the upward basic construction of $Z$ for $F$
and $C_2 =C_1 \rtimes_{\widehat{\widehat{\rho}}}H$, $D_2 =D_1 \rtimes_{\widehat{\beta}}H$,
$C_3 =C_2 \rtimes_{\widehat{\widehat{\widehat{\rho}}}}H^0$, $D_3 =D_2 \rtimes_{\widehat{\widehat{\beta}}}H^0$,
$\beta_f =(\id\otimes f)\circ\beta$.
Let $Y_2$ and $Y_3$ be the upward and the second upward basic constructions of $Y_1$ for $E^Y$,
respectively. Then $[Y_2 , Y_3 ]\in\Pic(C_2 , C_3 )$. Let $\widehat{\widehat{\widehat{\theta}}}$ be the homomorphism
of $\GPic_{H^0}^{\widehat{\widehat{\widehat{\rho}}}}(C_2 )$ to $\Pic(C_2 , C_3 )$ in the same way as in the
beginning of this section. We show that 
there is an element $x\in\GPic_{H^0}^{\widehat{\widehat{\widehat{\rho}}}}(C_2 )$ such that
$$
\widehat{\widehat{\widehat{\theta}}}(x)=[Y_2 , Y_3 ]
$$
in $\Pic(C_2 , C_3 )$. In order to this, we prepare some lemmas: Let $(\rho, u)$ and $(\sigma, v)$ be
twisted coactions of $H^0$ on unital $C^*$-algebras $A$ and $B$, respectively.
We suppose that $(\rho, u)$ and $(\sigma, v)$ are strongly Morita equivalent. Let $\lambda$ be the twisted
coaction of $H^0$ on an $A-B$-equivalence bimodule $X$ with respect to $(A, B, \rho, u, \sigma, v)$.
Let $C=A\rtimes_{\rho, u}H$, $C_1 =C\rtimes_{\widehat{\rho}}H^0$,
$D=B\rtimes_{\sigma, v}H$, $D_1 =D\rtimes_{\widehat{\sigma}}H^0$, respectively.
Let $E^X$ be the conditional expectation from $X\rtimes_{\lambda}H$ onto $X$ for
$E_1^{\rho, u}$ and $E_1^{\sigma, v}$, which is defined by
$$
E^X (x\rtimes_{\lambda}h)=x\tau(h)
$$
for any $x\in X$, $h\in H$.

\begin{lemma}\label{lem:prepare1}With the above notations and assumptions, let $Y_1$ be the upward basic construction of
$X\rtimes_{\lambda}H$ for $E^X$. Then $Y_1 \cong X\rtimes_{\lambda}H\rtimes_{\widehat{\lambda}}H^0$ as
$C_1 -D_1$-equivalence bimodules.
\end{lemma}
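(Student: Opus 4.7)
The plan is to identify both sides as $C_1$-$D_1$-equivalence bimodules with a common underlying model, then construct an explicit isomorphism between them. First I would recall the concrete realization of the upward basic construction: for the inclusion of equivalence bimodules $X \hookrightarrow X\rtimes_{\lambda}H$ sitting over the inclusions $A \hookrightarrow C$ and $B \hookrightarrow D$, the bimodule $Y_1$ is canonically described as $C\otimes_A X\otimes_B \widetilde{D}$ with its natural $C_1$-$D_1$-equivalence bimodule structure, using quasi-bases $\{(u_i,u_i^*)\}$ for $E_1^{\rho, u}$ and $\{(v_j,v_j^*)\}$ for $E_1^{\sigma, v}$. This mirrors the pattern already invoked in Section~\ref{sec:prepararion}, where $1_A\in Y_1$ was represented as $\sum_{i,j} u_i \otimes F(u_i^* \cdot 1\cdot v_j)\otimes\widetilde{v_j}$.

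Next I would construct an explicit map
$$
\Phi : X\rtimes_{\lambda}H\rtimes_{\widehat{\lambda}}H^0 \longrightarrow C\otimes_A X\otimes_B \widetilde{D},
$$
modelled on the bimodule analogue of the Takesaki-type isomorphism $A\rtimes_{\rho, u}H\rtimes_{\widehat{\rho}}H^0 \cong A\otimes M_N(\BC)$ established in \cite[Proposition~2.8]{Kodaka:equivariance} and the accompanying bimodule statement in \cite[Proposition~3.6]{Kodaka:equivariance}. A natural formula sends a simple tensor $x\rtimes_{\lambda}h\rtimes_{\widehat{\lambda}}\phi$ to a sum indexed by the quasi-bases, inserting the Haar projections $\tau\in H^0$ and $e\in H$ so that the dual crossed-product relations on the left of $\Phi$ match the tensor-product relations on the right.

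The verification then decomposes into four standard steps: $\Phi$ is left $C_1$-linear, $\Phi$ is right $D_1$-linear, $\Phi$ preserves the $C_1$-valued and $D_1$-valued inner products, and $\Phi$ has dense (hence full) image. Once isometry is in hand, surjectivity follows either by writing down an explicit inverse contracting against elements of the form $1\rtimes_{\lambda}1\rtimes_{\widehat{\lambda}}\tau$ (the analogue of the Jones projection for the inclusion $X\subset X\rtimes_{\lambda}H$) or by noting that the obvious generators of $C\otimes_A X\otimes_B \widetilde{D}$ already lie in $\Ima\,\Phi$.

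The main obstacle is the inner-product step: verifying the identities requires tracking Sweedler-indexed sums through both $H$ and $H^0$, using $\sum\phi(h_{(1)})\epsilon(h_{(2)})=\phi(h)$, the Haar-element identities $h\tau=\tau(h)\tau$ and $\phi e=\phi(e)e$, and the relations that $\lambda$ is a twisted coaction on $X$ with respect to $(A,B,\rho,u,\sigma,v)$. The computations run parallel to those in \cite[Proposition~2.8]{Kodaka:equivariance} and to the preparatory calculations at the start of Section~\ref{sec:prepararion}, but the presence of the bimodule $X$ and the coaction $\lambda$ replaces scalar multiplication by a bimodule action, so the bookkeeping (rather than any new conceptual input) is where the real work lies.
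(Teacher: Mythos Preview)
Your plan is sound in principle but takes a substantially longer route than the paper. The paper's proof is two lines: it simply exhibits the conditional expectation
\[
E^{X\rtimes_{\lambda}H}:X\rtimes_{\lambda}H\rtimes_{\widehat{\lambda}}H^0\longrightarrow X\rtimes_{\lambda}H,
\qquad
x\rtimes_{\lambda}h\rtimes_{\widehat{\lambda}}\phi\longmapsto (x\rtimes_{\lambda}h)\,\phi(e),
\]
compatible with $E_2^{\rho,u}$ and $E_2^{\sigma,v}$, and then invokes \cite[Proposition~6.11]{KT4:morita}, which says that the upward basic construction is characterised (up to equivalence-bimodule isomorphism) by the existence of such an expectation. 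No explicit map to $C\otimes_A X\otimes_B\widetilde{D}$ is ever written down.

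Your approach---realising $Y_1$ concretely as $C\otimes_A X\otimes_B\widetilde{D}$ and building an explicit $\Phi$ from the double crossed product by a Takesaki-duality-type formula---would certainly work, and has the merit of being self-contained rather than relying on the black box from \cite{KT4:morita}. But the four verification steps you outline (two linearities, two inner products) are exactly the kind of Sweedler-index bookkeeping that the characterisation result in \cite{KT4:morita} was designed to avoid. If you already accept that proposition, the paper's argument is strictly shorter; if you want to bypass it, your route is the natural alternative, at the cost of several pages of computation.
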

\begin{proof}Let $E^{X\rtimes_{\lambda}H}$ be the conditional expectation from
$X\rtimes_{\lambda}H\rtimes_{\widehat{\lambda}}H^0$ onto $X\rtimes_{\lambda}H$ with respect to $E_2^{\rho, u}$ and
$E_2^{\sigma, v}$, which is defined by
$$
E^{X\rtimes_{\lambda}H}(x\rtimes_{\lambda}h\rtimes_{\widehat{\lambda}}\phi)=(x\rtimes_{\lambda}h)\phi(e)
$$
for any $x\in X$, $h\in H$. Hence by \cite [Proposition 6.11]{KT4:morita}, we obtain the conclusion.
\end{proof}

\begin{lemma}\label{lem:prepare5}With the above notations and assumptions, $(X\rtimes_{\lambda}H)^{\widetilde{}}$
is isomorphic to $\widetilde{X}\rtimes_{\widetilde{\lambda}}H$ as $B\rtimes_{\sigma , v}H-A\rtimes_{\rho, u}H$-
equivalence bimodules
\end{lemma}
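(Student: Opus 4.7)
The plan is to write down an explicit candidate map on generators and verify it is a $D$-$C$-equivalence bimodule isomorphism (with $C=A\rtimes_{\rho,u}H$ and $D=B\rtimes_{\sigma,v}H$). Linearly, $X\rtimes_\lambda H$ is $X\otimes H$, so $(X\rtimes_\lambda H)^{\widetilde{}}$ is linearly $\widetilde{X\otimes H}$. By analogy with the identification $\widetilde{X\otimes H^0}\cong\widetilde{X}\otimes H^0$ via $\widetilde{x\otimes\phi}\mapsto\widetilde{x}\otimes\phi^*$ used earlier in the paper (and in \cite{Kodaka:equivariance}), the natural guess on the $H$-side is
$$
\Theta : (X\rtimes_\lambda H)^{\widetilde{}}\to\widetilde{X}\rtimes_{\widetilde{\lambda}}H, \quad
(x\rtimes_\lambda h)^{\widetilde{}}\longmapsto \widetilde{x}\rtimes_{\widetilde{\lambda}}S(h)^* ,
$$
extended antilinearly. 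The bijectivity is then immediate from the invertibility of $S$ together with the involution $h\mapsto S(h)^*$ on $H$, and linearity is automatic.

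The bulk of the work is to check that $\Theta$ intertwines the two bimodule structures and the two inner products. Concretely, I would verify in turn:
\begin{enumerate}
\item[(i)] $\Theta((b\rtimes_{\sigma,v}k)\cdot(x\rtimes_\lambda h)^{\widetilde{}})=(b\rtimes_{\sigma,v}k)\cdot\Theta((x\rtimes_\lambda h)^{\widetilde{}})$,
\item[(ii)] $\Theta((x\rtimes_\lambda h)^{\widetilde{}}\cdot(a\rtimes_{\rho,u}k))=\Theta((x\rtimes_\lambda h)^{\widetilde{}})\cdot(a\rtimes_{\rho,u}k)$,
\item[(iii)] ${}_D\la\Theta((x\rtimes_\lambda h)^{\widetilde{}}),\Theta((y\rtimes_\lambda k)^{\widetilde{}})\ra=\la x\rtimes_\lambda h, y\rtimes_\lambda k\ra_D$,
\item[(iv)] $\la\Theta((x\rtimes_\lambda h)^{\widetilde{}}),\Theta((y\rtimes_\lambda k)^{\widetilde{}})\ra_C={}_C\la x\rtimes_\lambda h, y\rtimes_\lambda k\ra$.
\end{enumerate}
Each of these unwinds in two steps: first, expand the left-hand side using the crossed-product formulas for $X\rtimes_\lambda H$ (with the twisting cocycle $\widehat{v}$ on the $B$-side), then convert to the dual side via the defining identities for $\widetilde{X}$ and $\widetilde{\lambda}$, and compare with the right-hand side, which is a direct calculation in the crossed product $\widetilde{X}\rtimes_{\widetilde{\lambda}}H$ (with cocycle governed by $\widehat{u}$).

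The main obstacle I expect is bookkeeping of the cocycle terms. The multiplication in $X\rtimes_\lambda H$ introduces factors of $\widehat{v}(h_{(2)},k_{(1)})$ in Sweedler-style sums, and after taking $\widetilde{}$ and passing through the antipode $S$ with adjoints, one needs the identity $\widehat{\widetilde{\lambda}}$-cocycle relation to recover precisely the factor $\widehat{u}$ predicted by the crossed-product structure on $\widetilde{X}\rtimes_{\widetilde{\lambda}}H$. Here one uses the standard compatibilities $S(h_{(1)})h_{(2)}=\epsilon(h)1$, $(S\otimes S)\circ\Delta=\Delta^{\rm op}\circ S$, and the $*$-compatibility $S\circ *=*\circ S^{-1}$; once these are applied carefully to the cocycle arguments in (iii) and (iv), everything reduces to the definition of $\widetilde{\lambda}$, giving the desired identifications. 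Steps (i) and (ii) are then immediate consequences of the inner-product identities by standard equivalence-bimodule arguments. This yields the required isomorphism of $D$-$C$-equivalence bimodules.
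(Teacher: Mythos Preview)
Your overall plan---write down an explicit map on generators and verify the four bimodule compatibilities---is exactly right, and it is what the paper does. The problem is the candidate map itself. Your analogy with the identification $\widetilde{X\otimes H^0}\cong\widetilde{X}\otimes H^0$ is misleading: that one works because $X\otimes H^0$ is a genuine (outer) tensor product, whereas $X\rtimes_\lambda H$ is not---the $H$-leg acts nontrivially on the $X$-leg via $\lambda$ and on the algebra legs via $\rho,\sigma$. So the ``naive'' map $(x\rtimes_\lambda h)^{\widetilde{}}\mapsto\widetilde{x}\rtimes_{\widetilde{\lambda}}S(h)^*$ does \emph{not} intertwine the left $D$-action. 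Already in the untwisted group case $H=\BC[G]$ (where $S(g)^*=g$ for group-like $g$) one computes
\[
\Theta\bigl((1\rtimes k)\cdot(x\rtimes g)^{\widetilde{}}\bigr)
=\Theta\bigl((x\rtimes gk^{-1})^{\widetilde{}}\bigr)
=\widetilde{x}\rtimes gk^{-1},
\]
while
\[
(1\rtimes k)\cdot\Theta\bigl((x\rtimes g)^{\widetilde{}}\bigr)
=(1\rtimes k)\cdot(\widetilde{x}\rtimes g)
=[k\cdot_{\widetilde{\lambda}}\widetilde{x}]\rtimes kg ,
\]
and these disagree as soon as $G$ is nonabelian (the $H$-slots are $gk^{-1}$ versus $kg$). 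So step (i) in your verification would fail, not merely be a bookkeeping nuisance.

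The paper's map (which is the twisted version of \cite[Lemma~4.3]{Kodaka:equivariance}) corrects exactly this defect: it first moves the $H$-leg across $x$ by acting with the antipode on the $X$-component and inserting the appropriate cocycle, namely
\[
\pi\bigl((x\rtimes_\lambda h)^{\widetilde{}}\bigr)
=\bigl\{[S(h_{(3)})\cdot_\lambda x]\,\widehat{v}\bigl(S(h_{(2)}),h_{(1)}\bigr)\bigr\}^{\widetilde{}}
\rtimes_{\widetilde{\lambda}}h_{(4)}^{*}\,.
\]
This is precisely the shape of the adjoint $(x\rtimes_\lambda h)^*$ one would write in a twisted crossed product, transported to the bimodule setting. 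Once you use this $\pi$ instead of $\Theta$, your verification scheme (i)--(iv) goes through; the ``bookkeeping of cocycle terms'' you anticipated is real, but it is handled by the cocycle identity for $\widehat{v}$ together with $(S\otimes S)\circ\Delta=\Delta^{\rm op}\circ S$ and the $*$-compatibility of $S$, exactly as you outlined.
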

\begin{proof}We can prove the lemma modifying the proof of \cite [Lemma 4.3]{Kodaka:equivariance}.
Let $\pi$ be the bijective linear map from $(X\rtimes_{\lambda}H)^{\widetilde{}}$ onto
$\widetilde{X}\rtimes_{\widetilde{\lambda}}H$ defined by
$$
\pi((x\rtimes_{\lambda}h)^{\widetilde{}})=\{[S(h_{(3)})\cdot_{\lambda}x]\widehat{v}(S(h_{(2)}), h_{(1)})\}^{\widetilde{}}
\rtimes_{\lambda}h_{(4)}^* 
$$
for any $x\in X$, $h\in H$. Then by routine computations, $\pi$ is a $B\rtimes_{\sigma, v}H-A\rtimes_{\rho, u}H$
-equivalence bimodule isomorphism of $(X\rtimes_{\lambda}H)^{\widetilde{}}$ onto
$\widetilde{X}\rtimes_{\widetilde{\lambda}}H$.
\end{proof}

We recall that $Z=\widetilde{Y_1}\otimes_{C_1}(Y\rtimes_{\mu}H^0 )$. Hence by Lemmas \ref{lem:prepare1} and
\cite [Lemma 3.3]{Kodaka:Picard},
$$
Z_1 \cong\widetilde{Y_2}\otimes_{C_2}(Y\rtimes_{\mu}H^0 \rtimes_{\widehat{\mu}}H)
$$
as $C_2 -D_2$-equivalence bimodules and
$$
Z_2 \cong \widetilde{Y_3}\otimes_{C_3}(Y\rtimes_{\mu}H^0 \rtimes_{\widehat{\mu}}H
\rtimes_{\widehat{\widehat{\mu}}}H^0 )
$$
Furthermore, by \cite [Lemmas 3.4 and 3.5]{Kodaka:Picard}
the inclusions $Z_1 \subset Z_2$ and
$$
\widetilde{Y_2}\otimes_{C_2}(Y\rtimes_{\mu}H^0 \rtimes_{\widehat{\mu}}H)\subset \widetilde{Y_3}\otimes_{C_3}
(Y\rtimes_{\mu}H^0 \rtimes_{\widehat{\mu}}H\rtimes_{\widehat{\widehat{\mu}}}H^0 )
$$
are strongly Morita equivalent as unital inclusions of unital $C^*$-algebras. Also, we recall that
by Proposition \ref{prop:s-iso3},
there are the $C^*$-Hopf algebra automorphism $f$ of $H$ and the coaction $\lambda$ of $H$ on $Z_1$
with respect to $(C_2 ,  \,D_2 , \, \widehat{\widehat{\widehat{\rho}}}, \, \widehat{\widehat{\beta_f}})$ such that
$$
Z_2 \cong Z_1 \rtimes_{\lambda}H^0
$$
as $C_3 -D_3$-equivalence bimodules. Hence
$$
Z_1 \rtimes_{\lambda}H^0 \cong \widetilde{Y_3}\otimes_{C_3}(Y\rtimes_{\mu}H^0 \rtimes_{\widehat{\mu}}H
\rtimes_{\widehat{\widehat{\mu}}}H^0 ) .
$$
as $C_3 -D_3$-equivalence bimodules. Thus by Lemma \ref{lem:prepare5},
\begin{align*}
Y_3 & \cong (Y\rtimes_{\mu}H^0 \rtimes_{\widehat{\mu}}H\rtimes_{\widehat{\widehat{\mu}}}H^0 )
\otimes_{D_3}(Z_1 \rtimes_{\lambda}H^0 )^{\widetilde{}} \\
& \cong [(Y\rtimes_{\mu}H^0 \rtimes_{\widehat{\mu}}H)\otimes_{Z_2}\widetilde{Z_1}]\rtimes_{\widehat{\widehat{\mu}}
\otimes\widetilde{\lambda}}H^0
\end{align*}
as $C_3 -C_3$-equivalence bimodules.
Also,
$Y_2 \cong (Y\rtimes_{\mu}H^0 \rtimes_{\widehat{\mu}}H)\otimes_{D_2}\widetilde{Z_1}$ as
$C_2 -C_2$-equivalence bimodules.
It follows that
$$
[Y_2 , Y_3 ]=[(Y\rtimes_{\mu}H^0 \rtimes_{\widehat{\mu}}H)\otimes_{D_2}\widetilde{Z_1} \, , \,
[(Y\rtimes_{\mu}H^0 \rtimes_{\widehat{\mu}}H)\otimes_{D_2}\widetilde{Z_1}]
\rtimes_{\widehat{\widehat{\mu}}\otimes\widetilde{\lambda}}H^0 ]
$$
in $\Pic(C_2 , C_3 )$. Then
$$
[(Y\rtimes_{\mu}H^0 \rtimes_{\widehat{\mu}}H)\otimes_{D_2}\widetilde{Z_1} \, , \, \widehat{\widehat{\mu}}
\otimes\widetilde{\lambda} \, , \,
f^{-1}]\in\Pic_{H^0}^{\widehat{\widehat{\widehat{\rho}}}}(C_2 )
$$
and
$$
\widehat{\widehat{\widehat{\theta}}}([(Y\rtimes_{\mu}H^0 \rtimes_{\widehat{\mu}}H)\otimes_{D_2}\widetilde{Z_1} \, , \,
\widehat{\widehat{\mu}}\otimes\widetilde{\lambda} \, , \, f^{-1}])=[Y_2 , Y_3 ] .
$$
Furthermore, we have the following lemma:

\begin{lemma}\label{lem:prepare6}Let $(\rho, u)$ be a twisted coaction of $H^0$ on a unital $C^*$-algebra $A$.
Let $\theta$ be the homomorphism of $\GPic_H^{\rho, u}(A)$ to $\Pic(A, C)$ defined by
$$
\theta([X, \lambda, f^0 ])=[X, X\rtimes_{\lambda}H]
$$
for any $(X, \lambda, f^0 )\in\GEqui_H^{\rho, u}(A)$, where $C=A\rtimes_{\rho, u}H$. Also, let $\widehat{\theta}$ be the
homomorphism of $\GPic_{H^0}^{\widehat{\rho}}(C)$ to $\Pic(C, C_1 )$ defined by
$$
\widehat{\theta}([Y, \mu, g])=[Y, Y\rtimes_{\mu}H^0 ]
$$
for any $(Y, \mu, g)\in\GEqui_{H^0}^{\widehat{\rho}}(C)$, where $C_1 =C\rtimes_{\widehat{\rho}}H^0$.
Furthermore, let $F$ and $G$ be the isomorphisms of $\GPic_H^{\rho, u}(A)$
and $\Pic(A, C)$ onto $\GPic_{H^0}^{\widehat{\rho}}(C)$ and $\Pic(C, C_1 )$ defined by
\begin{align*}
F([X, \lambda, f^0 ]) & =[X\rtimes_{\lambda}H, \widehat{\lambda}, f] , \\
G([X, Y]) & =[Y, Y_1 ]
\end{align*}
for any $(X, \lambda, f^0 )\in\GEqui_{H^0}^{\widehat{\rho}}(C)$ and $(X, Y)\in\Equi(A, C)$, respectively,
where $f$ is the $C^*$-Hopf algebra automorphism of $H$ induced by $f^0$ and $Y_1$ is the upward basic
construction of $Y$ for the conditional expectation $E^X :Y\to X$ with respect to the canonical
conditional expectation $E_1^{\rho, u} : C\to A$ and $E_1^{\rho, u}$. Then $G\circ\theta =\widehat{\theta}\circ F$.
\end{lemma}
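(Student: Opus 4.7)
The plan is to unravel both compositions and show they coincide via Lemma \ref{lem:prepare1}.

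For any $(X,\lambda,f^0) \in \GEqui_H^{\rho,u}(A)$, direct substitution gives
$$
(G\circ\theta)([X,\lambda,f^0]) = G([X, X\rtimes_\lambda H]) = [X\rtimes_\lambda H,\, (X\rtimes_\lambda H)_1]
$$
in $\Pic(C,C_1)$, where $(X\rtimes_\lambda H)_1$ denotes the upward basic construction of $X\rtimes_\lambda H$ for the canonical conditional expectation $E^X: X\rtimes_\lambda H \to X$ with respect to $E_1^{\rho,u}$ on both sides, using the isomorphism $\pi_{f^0}$ to identify $A\rtimes_{\rho_{f^0},u_{f^0}}H$ with $C$. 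On the other hand,
$$
(\widehat{\theta}\circ F)([X,\lambda,f^0]) = \widehat{\theta}([X\rtimes_\lambda H,\,\widehat{\lambda},\,f]) = [X\rtimes_\lambda H,\, (X\rtimes_\lambda H)\rtimes_{\widehat{\lambda}}H^0].
$$

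Thus the task reduces to exhibiting an isomorphism of $C_1-C_1$-equivalence bimodules
$$
\Psi: (X\rtimes_\lambda H)_1 \xrightarrow{\cong} (X\rtimes_\lambda H)\rtimes_{\widehat{\lambda}}H^0
$$
that respects the common closed subspace $X\rtimes_\lambda H$, embedded on the left as the canonical copy inside the basic construction and on the right via $y\mapsto y\rtimes_{\widehat{\lambda}}1^0$. The map $\Psi$ is supplied by Lemma \ref{lem:prepare1}, applied with $B=A$ and $(\sigma,v)=(\rho_{f^0},u_{f^0})$ (after the identification via $\pi_{f^0}$), which says precisely that the basic construction of $X\rtimes_\lambda H$ over $X$ is isomorphic, as an equivalence bimodule over the double crossed products, to $(X\rtimes_\lambda H)\rtimes_{\widehat{\lambda}}H^0$.

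What remains is to verify that $\Psi$ carries the subspace $X\rtimes_\lambda H \subset (X\rtimes_\lambda H)_1$ onto $(X\rtimes_\lambda H)\rtimes_{\widehat{\lambda}}1^0$. This is essentially built into the construction of $\Psi$ through \cite[Proposition 6.11]{KT4:morita}: the canonical embedding of $X\rtimes_\lambda H$ into its basic construction corresponds under $\Psi$ precisely to the degree-zero embedding into the crossed product by $\widehat{\lambda}$. I expect this last compatibility to be the only real obstacle in the proof, amounting to tracking the canonical embeddings through the chain of identifications that constitute the basic construction and through $\pi_{f^0}$. Once this is confirmed, both images represent the same class in $\Pic(C,C_1)$, establishing $G\circ\theta=\widehat{\theta}\circ F$.
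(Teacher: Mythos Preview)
Your proposal is correct and follows essentially the same route as the paper: the paper computes $(G\circ\theta)([X,\lambda,f^0])=[X\rtimes_\lambda H,\,C\otimes_A X\otimes_A\widetilde{C}]$ (writing the basic construction concretely) and $(\widehat{\theta}\circ F)([X,\lambda,f^0])=[X\rtimes_\lambda H,\,X\rtimes_\lambda H\rtimes_{\widehat{\lambda}}H^0]$, then invokes Lemma~\ref{lem:prepare1} together with \cite[Proposition~6.11]{KT4:morita} to identify the two classes in $\Pic(C,C_1)$. Your explicit attention to the compatibility of the embedded copy $X\rtimes_\lambda H$ is a point the paper leaves implicit in its appeal to \cite[Proposition~6.11]{KT4:morita}, but otherwise the arguments coincide.
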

\begin{proof}Let $[X, \lambda, f^0 ]$ be any element in $\GPic_H^{\rho, u}(A)$. Then
\begin{align*}
(G\circ\theta)([X, \lambda, f^0 ]) & =[X\rtimes_{\lambda}H \, ,  \, C\otimes_A X \otimes_A \widetilde{C}] , \\
(\widehat{\theta}\circ F)([X, \lambda, f^0 ]) & =
[X\rtimes_{\lambda}H \, , \, X\rtimes_{\lambda}H\rtimes_{\widehat{\lambda}}H^0 ] .
\end{align*}
By Lemma \ref {lem:prepare1} and \cite [Proposition 6.11]{KT4:morita}, we can see that
$$
[X\rtimes_{\lambda}H \, , \, C\otimes_A X\otimes_A \widetilde{C}]=[X\rtimes_{\lambda}H \, , \, X\rtimes_{\lambda}H
\rtimes_{\widehat{\lambda}}H^0 ]
$$
in $\Pic(C, C_1 )$. Therefore, we obtain the conclusion.
\end{proof}
\begin{thm}\label{thm:surjection}Let $\theta$ be the homomorphism of $\GPic_H^{\rho, u}(A)$ to $\Pic(A, C)$ defined
in the above. We suppose that $A' \cap C=\BC1$. Then $\theta$ is surjective.
\end{thm}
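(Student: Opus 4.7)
The plan is to reduce surjectivity of $\theta$ to the surjectivity of an analogous homomorphism three levels up the basic-construction tower, where the preimage has effectively already been produced in the discussion preceding the statement.

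Given any $[X, Y]\in\Pic(A, C)$, that discussion constructs explicitly the element
$$
w' := [(Y\rtimes_{\mu}H^0 \rtimes_{\widehat{\mu}}H)\otimes_{D_2}\widetilde{Z_1} \, , \, \widehat{\widehat{\mu}}\otimes\widetilde{\lambda} \, , \, f^{-1}]\in\GPic_{H^0}^{\widehat{\widehat{\widehat{\rho}}}}(C_2)
$$
which satisfies $\widehat{\widehat{\widehat{\theta}}}(w') = [Y_2, Y_3]$ in $\Pic(C_2, C_3)$, where $\widehat{\widehat{\widehat{\theta}}}$ denotes the analogue of $\theta$ built from the inclusion $C_2 \subset C_3$ and the twisted coaction $\widehat{\widehat{\widehat{\rho}}}$. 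This step relies on Proposition \ref{prop:s-iso3} and on the fact that the hypothesis $A'\cap C=\BC 1$ propagates up the Jones tower (giving $C'\cap C_1=\BC 1$ and $C_1'\cap C_2=\BC 1$), so that the unique conditional expectations needed to form the relevant basic constructions exist.

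Next, I iterate Theorem \ref{thm:iso5} three times to obtain a composite isomorphism
$$
F^{(3)} : \GPic_H^{\rho, u}(A) \longrightarrow \GPic_{H^0}^{\widehat{\widehat{\widehat{\rho}}}}(C_2),
$$
and I similarly iterate the Picard-group isomorphism $[X, Y]\mapsto [Y, Y_1]$ for strongly Morita equivalent unital inclusions (as established in \cite{KT4:morita} and \cite{Kodaka:Picard}) three times to obtain
$$
G^{(3)} : \Pic(A, C) \longrightarrow \Pic(C_2, C_3), \qquad G^{(3)}([X, Y]) = [Y_2, Y_3].
$$
Applying Lemma \ref{lem:prepare6} at each of the three levels of the tower yields the commutative diagram $G^{(3)}\circ\theta = \widehat{\widehat{\widehat{\theta}}}\circ F^{(3)}$.

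Setting $w := (F^{(3)})^{-1}(w')\in\GPic_H^{\rho, u}(A)$, we then compute
$$
G^{(3)}(\theta(w)) = \widehat{\widehat{\widehat{\theta}}}(F^{(3)}(w)) = \widehat{\widehat{\widehat{\theta}}}(w') = [Y_2, Y_3] = G^{(3)}([X, Y]),
$$
and since $G^{(3)}$ is an isomorphism, $\theta(w) = [X, Y]$, proving surjectivity. The main obstacle --- producing the explicit preimage $w'$ at the third level and verifying that $\widehat{\widehat{\widehat{\theta}}}(w') = [Y_2, Y_3]$ --- has already been dispatched above via Proposition \ref{prop:s-iso3} together with Lemmas \ref{lem:prepare1} and \ref{lem:prepare5}; what remains for the actual proof is only to note the three-level iteration of Lemma \ref{lem:prepare6}, which is immediate from its proof.
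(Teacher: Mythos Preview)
Your proposal is correct and follows essentially the same approach as the paper. The paper's own proof is extremely terse --- it merely cites the discussion preceding Lemma~\ref{lem:prepare6} for the existence of $w'$ with $\widehat{\widehat{\widehat{\theta}}}(w')=[Y_2,Y_3]$ and then says ``Hence by Lemma~\ref{lem:prepare6}, $\theta$ is surjective'' --- whereas you have made explicit the three-fold iteration of the commutative square $G\circ\theta=\widehat{\theta}\circ F$ and the use of the isomorphisms $F^{(3)}$ (from Theorem~\ref{thm:iso5}) and $G^{(3)}$ to pull the preimage back down to $\GPic_H^{\rho,u}(A)$; this is exactly the argument the paper intends.
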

\begin{proof}Let $[X, Y]$ be any element in $\Pic(A, C)$. Then by the discussions before Lemma \ref{lem:prepare6},
$$
\widehat{\widehat{\widehat{\theta}}}([Y\rtimes_{\mu}H^0 \rtimes_{\widehat{\mu}}H)\otimes\widetilde{Z_1} \, , \,
\widehat{\widehat{\mu}}\otimes\widetilde{\lambda} \, , \, f^{-1}]=[Y_2, Y_3] .
$$
Hence by Lemma \ref{lem:prepare6}, $\theta$ is surjective.
\end{proof}

Next, we shall show that
$$
\Ker \, \theta=\{[{}_A A_A \, , \, \rho, f^0 ]\in\GPic_H^{\rho, u}(A) \, | \, f^0 \in \Aut (H^0 ) \} .
$$
Clearly, for any $f^0 \in\Aut (H^0 )$, $[{}_A A_A \, , \, \rho \, , \, f^0 ]\in\Ker\, \theta$.
We show that
$$
\Ker \, \theta\subset\{[{}_A A_A \, , \, \rho, f^0 ]\in\GPic_H^{\rho, u}(A) \, | \, f^0 \in \Aut (H^0 ) \} .
$$
Let $[X, \lambda, f^0 ]\in\GPic_H^{\rho, u}(A)$. We suppose that $\theta([X, \lambda, f^0 ])=[A, C]$
in $\Pic_H^{\rho, u}(A)$. Then there is a $C-C$-equivalence bimodule isomorphism $\pi$ of $C$ onto
$X\rtimes_{\lambda}H$ such that $\pi|_A$ is an $A-A$-equivalence bimodule isomorphism
of $A$ onto $X$. Let $\pi_0 =\pi|_A$.

\begin{lemma}\label{lem:kernel1}With the above notations and assumptions, $[X, \lambda, f^0 ]
=[{}_A A_A \, , \, \rho, f^0 ]$ in $\GPic_H^{\rho, u}(A)$.
\end{lemma}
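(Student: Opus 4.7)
The candidate equivalence is $\pi_0 := \pi|_A \colon A \to X$, which is already given to be an $A$-$A$-equivalence bimodule isomorphism, and the third components agree by hypothesis. Hence, by the equivalence relation defining $\GEqui_H^{\rho, u}(A)$, it suffices to verify the coaction intertwining
$$
\lambda \circ \pi_0 = (\pi_0 \otimes \id_{H^0}) \circ \rho.
$$

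The plan is to extract this intertwining from the $C$-$C$-bimodule property of $\pi$. For any $a \in A$ and $h \in H$, one computes $\pi((1 \rtimes_{\rho, u} h) \cdot a)$ in two ways. By $C$-linearity and the left $C$-action on $X \rtimes_\lambda H$, it equals $\sum (h_{(1)} \cdot_\lambda \pi_0(a)) \rtimes_\lambda h_{(2)}$, with the twist cocycle $\widehat{u}(h_{(2)}, 1)$ collapsing via its counit normalization. On the other side, the crossed-product multiplication in $C$ gives $(1 \rtimes_{\rho, u} h) \cdot a = \sum (h_{(1)} \cdot_\rho a) \rtimes_{\rho, u} h_{(2)}$, and applying $\pi$ through the right-action identification $A \rtimes_{\rho, u} H \cong A \rtimes_{\rho_{f^0}, u_{f^0}} H$ furnished by $\pi_{f^0}$ produces $\sum \pi_0(h_{(1)} \cdot_\rho a) \rtimes_\lambda f(h_{(2)})$. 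Equating these two expressions in $X \rtimes_\lambda H$, viewed as $X \otimes H$ as a vector space, and pairing the $H$-slot with the counit $\epsilon = 1^0 \in H^0$ (using $\epsilon \circ f = \epsilon$, since Hopf algebra automorphisms preserve the counit) causes the $f$-twist to drop out and collapses both sides to
$$
h \cdot_\lambda \pi_0(a) = \pi_0(h \cdot_\rho a) \quad \text{for every } h \in H,
$$
which is equivalent to the desired intertwining $\lambda \circ \pi_0 = (\pi_0 \otimes \id_{H^0}) \circ \rho$.

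The principal technical obstacle is the careful bookkeeping of the $u$- and $u_{f^0}$-cocycles appearing in both the crossed-product multiplications and the left/right bimodule actions on $X \rtimes_\lambda H$, together with correctly threading the $f$-twist coming from $\pi_{f^0}$. As an internal consistency check, the derived intertwining combined with the right-compatibility $\lambda(x \cdot a) = \lambda(x) \cdot \rho_{f^0}(a)$ (applied with $x = \pi_0(b)$ and using the $A$-$A$-bilinearity of $\pi_0$) forces $\rho = \rho_{f^0}$ on $A$, which is precisely the condition making $[{}_A A_A, \rho, f^0]$ well-defined as an element of $\GEqui_H^{\rho, u}(A)$. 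These verifications proceed along the lines of Lemma \ref{lem:equivalence} and the analogous computations in \cite{Kodaka:equivariance}.
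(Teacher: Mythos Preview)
Your argument is correct and follows the same underlying idea as the paper: exploit the $C$-$C$-bilinearity of $\pi$ to transport the $H$-action from $C=A\rtimes_{\rho,u}H$ to $X\rtimes_{\lambda}H$ and thereby deduce $h\cdot_{\lambda}\pi_0(a)=\pi_0(h\cdot_{\rho}a)$. The execution differs in one technical respect. The paper writes $h\cdot_{\rho,u}a=(1\rtimes_{\rho,u}h_{(1)})(a\rtimes_{\rho,u}1)(1\rtimes_{\rho,u}S(h_{(2)})^*)^*$, applies $\pi$ using both left and right $C$-linearity simultaneously, and then invokes \cite[Lemma 3.12]{KT1:inclusion} and \cite[Lemma 5.6]{KT3:equivalence} to recognize the result directly as $h\cdot_{\lambda}\pi_0(a)$; because the conjugation already lands in $A$ (respectively $X$), no further manipulation is needed. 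You instead compute $\pi((1\rtimes_{\rho,u}h)(a\rtimes_{\rho,u}1))$ via left linearity on one side and via the decomposition $(b\rtimes_{\rho,u}1)(1\rtimes_{\rho,u}h_{(2)})$ plus right linearity (through $\pi_{f^0}$) on the other, obtaining an identity in $X\otimes H$, and then contract the $H$-slot against $\epsilon$ to extract the desired relation. Your route trades the external lemmas for an elementary counit contraction and the observation $\epsilon\circ f=\epsilon$; the paper's route is shorter but leans on the cited conjugation formulas. Your closing remark that the intertwining forces $\rho=\rho_{f^0}$ (so that $({}_A A_A,\rho,f^0)$ is genuinely in $\GEqui_H^{\rho,u}(A)$) is a worthwhile observation that the paper leaves implicit.
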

\begin{proof}For any $a\in A$, $h\in H$,
\begin{align*}
\pi_0 (h\cdot_{\rho, u}a) &=\pi((1\rtimes_{\rho, u}h_{(1)})(a\rtimes_{\rho, u}1)(1\rtimes_{\rho, u}S(h_{(2)})^* )^* ) \\
& =\pi((1\rtimes_{\rho, u}h_{(1)})\cdot (a\rtimes_{\rho, u}1)\cdot (1\rtimes_{\rho, u}S(h_{(2)})^* )^* ) \\
& =(1\rtimes_{\rho, u}h_{(1)})\cdot (\pi_0 (a)\rtimes_{\lambda}1)\cdot (1\rtimes_{\rho, u}S(h_{(2)})^* )^* \\
& =h\cdot_{\lambda}\pi_0 (a)
\end{align*}
by \cite [Lemma 3.12]{KT1:inclusion} and \cite [Lemma 5.6]{KT3:equivalence}. Hence
$[X, \lambda, f^0 ]=[{}_A A_A \, , \, \rho, f^0 ]$ in $\GPic_H^{\rho, u}(A)$.
\end{proof}

By Lemma \ref{lem:kernel1}, we have the following corollary:

\begin{cor}\label{cor:kernel2}With the above notations, we suppose that $A' \cap C=\BC1$. Then
$$
\Ker \, \theta=\{[{}_A A_A \, , \, \rho, f^0 ]\in\GPic_H^{\rho, u}(A) \, | \, f^0 \in \Aut (H^0 ) \} .
$$
\end{cor}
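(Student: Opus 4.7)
My plan is to prove the two inclusions separately. The nontrivial direction will be almost entirely handled by the preceding Lemma \ref{lem:kernel1}, so the corollary is essentially a repackaging.

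First, I would establish the inclusion $\supseteq$: for any $f^0 \in \Aut(H^0 )$, directly unwind the definition of $\theta$ to get
$$
\theta([{}_A A_A \, , \, \rho, f^0 ])=[A, A\rtimes_{\rho, u}H]=[A, C],
$$
which is the unit element of $\Pic(A, C)$. Hence $[{}_A A_A, \rho, f^0] \in \Ker \theta$ for every such $f^0$.

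Next, for the inclusion $\subseteq$, I would take $[X, \lambda, f^0 ]\in\Ker\, \theta$, so that $[X, X\rtimes_{\lambda}H]=[A, C]$ in $\Pic(A, C)$. Unpacking the definition of the Picard group for unital inclusions from \cite{KT4:morita}, this furnishes a $C-C$-equivalence bimodule isomorphism $\pi: C \to X\rtimes_{\lambda}H$ whose restriction $\pi_0 := \pi|_A$ is an $A-A$-equivalence bimodule isomorphism onto $X$. This is precisely the setup of Lemma \ref{lem:kernel1}, which then gives $[X, \lambda, f^0] = [{}_A A_A , \rho, f^0 ]$ in $\GPic_H^{\rho, u}(A)$, completing the reverse inclusion.

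The only substantive content is the invocation of Lemma \ref{lem:kernel1}, whose key point is that the $C-C$-bimodule structure on $\pi$ automatically forces $\pi_0$ to intertwine $\rho$ and $\lambda$: one writes $h\cdot_{\rho, u}a = (1\rtimes_{\rho, u}h_{(1)})(a\rtimes_{\rho, u}1)(1\rtimes_{\rho, u}S(h_{(2)})^*)^*$ inside $C$, applies $\pi$ using its $C$-linearity, and recognizes the output as $h\cdot_{\lambda}\pi_0 (a)$. For the corollary itself no further obstacle arises. The hypothesis $A' \cap C = \BC 1$ is inherited from the ambient section and is what makes $\Pic(A, C)$ well-behaved (for example, via the uniqueness of conditional expectations in \cite{Kodaka:Picard}), but it does not enter this particular deduction directly; it merely ensures that the pair $(X, X\rtimes_{\lambda}H)$ really represents a class in $\Pic(A, C)$ and that equality of classes there translates into the existence of the bimodule isomorphism $\pi$ respecting the subspaces.
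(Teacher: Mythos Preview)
Your proposal is correct and follows essentially the same route as the paper: the text preceding Lemma~\ref{lem:kernel1} handles the inclusion $\supseteq$ exactly as you do, and the inclusion $\subseteq$ is obtained by unpacking the equality $[X, X\rtimes_{\lambda}H]=[A, C]$ into a $C$--$C$-bimodule isomorphism $\pi$ restricting to an $A$--$A$-bimodule isomorphism $\pi_0$, then invoking Lemma~\ref{lem:kernel1}. Your summary of the mechanism in that lemma (rewriting $h\cdot_{\rho,u}a$ via conjugation in $C$ and transporting through $\pi$) matches the paper's computation.
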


Furthermore, $\Ker \, \theta\cong \Aut (H^0 )$. Indeed, let $\kappa$ be the homomorphism of $\Aut(H^0 )$ to
$\GPic_H^{\rho, u}(A)$ defined in Remark \ref{rem:auto}(2). Then clearly $\Ima \kappa =\Ker \theta$. Hence
$\Ker \theta\cong \Aut(H^0 )$.

\begin{cor}\label{cor:kernel3}With the above notations, we suppose that $A' \cap C=\BC1$. Then
we have the exact sequence
$$
1\longrightarrow\Aut (H^0 )\overset{\kappa}\longrightarrow\GPic_H^{\rho, u}(A)\overset{\theta}\longrightarrow
\Pic(A, C)\longrightarrow 1 ,
$$
where $C=A\rtimes_{\rho, u}H$.
\end{cor}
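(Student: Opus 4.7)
The plan is to assemble the exact sequence by reading off each of its three exactness conditions from the material already established in the section, so the argument is essentially a bookkeeping exercise rather than a new calculation.

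First, surjectivity of $\theta$ at $\Pic(A, C)$ is exactly the content of Theorem \ref{thm:surjection}, which already uses the standing hypothesis $A' \cap C = \BC 1$; nothing further is required here. Next, exactness at $\GPic_H^{\rho, u}(A)$ amounts to the equality $\Ima\,\kappa = \Ker\,\theta$. The inclusion $\Ima\,\kappa \subseteq \Ker\,\theta$ is immediate from the definitions of $\kappa$ and $\theta$ together with the fact that ${}_A A_A \rtimes_{\rho} H \cong C$ as $C-C$-equivalence bimodules, which makes each $\theta(\kappa(f^0 )) = [A, C]$ the unit of $\Pic(A, C)$. The reverse inclusion $\Ker\,\theta \subseteq \Ima\,\kappa$ is precisely Corollary \ref{cor:kernel2}, which again uses $A' \cap C = \BC 1$.

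For exactness at $\Aut(H^0 )$, I would check injectivity of $\kappa$ directly from the definition of the equivalence relation $\sim$ on $\GEqui_H^{\rho, u}(A)$: if $\kappa(f^0 ) = [{}_A A_A , \rho, f^0 ]$ coincides with $[{}_A A_A , \rho, \id_{H^0} ]$, then the third coordinates are required to agree, giving $f^0 = \id_{H^0}$ at once. Thus $\Ker\,\kappa$ is trivial. Combining this with the previous paragraph yields the claimed exact sequence.

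The only part that requires genuine thought is the identification $\Ker\,\theta \subseteq \Ima\,\kappa$, but this is already packaged in Corollary \ref{cor:kernel2}; the present corollary is then purely formal. The hypothesis $A' \cap C = \BC 1$ enters only through Theorem \ref{thm:surjection} and Corollary \ref{cor:kernel2}, and no new use of it is needed in this assembly step.
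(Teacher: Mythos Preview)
Your proposal is correct and follows essentially the same assembly as the paper: surjectivity of $\theta$ from Theorem~\ref{thm:surjection}, $\Ima\,\kappa=\Ker\,\theta$ from Corollary~\ref{cor:kernel2} together with the observation $\theta(\kappa(f^0))=[A,C]$, and injectivity of $\kappa$ from the third-coordinate requirement in the equivalence relation (equivalently, from $\eta\circ\kappa=\id$ in Remark~\ref{rem:auto}(2)). The paper records this as the short paragraph immediately preceding the corollary rather than as a separate proof, but the content is the same.
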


\end{document}